\documentclass[12pt,a4paper]{article}

\usepackage[utf8]{inputenc}
\usepackage[T1]{fontenc}
\usepackage[english]{babel}

\usepackage{amsmath, amssymb, amsfonts, amsthm}
\usepackage{mathtools}
\usepackage{graphicx}
\usepackage{xcolor}
\usepackage{enumitem}
\usepackage{hyperref}

\usepackage{pgfplots}
\pgfplotsset{compat=1.17}
\usepackage{booktabs}

\usepackage[margin=1in]{geometry}

\usepackage{float}
\usepackage{tabularx}
\usepackage{array}
\newcolumntype{C}{>{\centering\arraybackslash}X}

\hypersetup{
    colorlinks=true,
    linkcolor=blue!50!black,
    citecolor=blue!50!black,
    urlcolor=blue!50!black
}

\newcommand{\added}[1]{#1}

\newcommand{\hl}[1]{#1}
\newcommand{\highlighting}[1]{#1}
\newcommand{\PublishersNote}[1]{}

\newcommand{\bE}{\mathbb E}
\newcommand{\bP}{\mathbb P}
\newcommand{\bQ}{\mathbb Q}
\newcommand{\bR}{\mathbb R}
\newcommand{\R}{\mathbb R}
\newcommand{\rd}{\mathrm d}
\newcommand{\rw}{\mathrm w}
\newcommand{\rT}{\mathrm T}
\newcommand{\cN}{\mathcal N}

\theoremstyle{plain}
\newtheorem{theorem}{Theorem}[section]
\newtheorem{lemma}[theorem]{Lemma}
\newtheorem{proposition}[theorem]{Proposition}
\newtheorem{corollary}[theorem]{Corollary}

\theoremstyle{definition}
\newtheorem{definition}[theorem]{Definition}
\newtheorem{assumption}[theorem]{Assumption}

\theoremstyle{remark}
\newtheorem{remark}[theorem]{Remark}
\newtheorem{example}[theorem]{Example}

\newenvironment{Theorem}{\begin{theorem}}{\end{theorem}}
\newenvironment{Lemma}{\begin{lemma}}{\end{lemma}}
\newenvironment{Proposition}{\begin{proposition}}{\end{proposition}}
\newenvironment{Corollary}{\begin{corollary}}{\end{corollary}}

\title{Weighted Chernoff Information and Optimal Loss Exponent\\[2pt]
in Context-Sensitive Hypothesis Testing}

\author{
Mark Kelbert\thanks{Laboratory of Stochastic Analysis and its Applications, Department of Statistics and Data Analysis, National Research University Higher School of Economics, 101000 Moscow, Russia, and Department of Mathematics, Swansea University, Swansea SA2 8PP, UK. Email: \texttt{mkelbert@hse.ru}.}
\and
El'mira Yu.~Kalimulina\thanks{Institute for Information Transmission Problems, Russian Academy of Sciences (IITP RAS), 127051 Moscow, Russia, and Faculty of Mechanics and Mathematics, Lomonosov Moscow State University, 119991 Moscow, Russia. Email: \texttt{eyk@iitp.ru}. ORCID: 0000-0001-7158-040X.}
}

\date{\today}

\begin{document}

\maketitle

\begin{abstract}
We study binary hypothesis testing for i.i.d.\ observations under a multiplicative context weight. For the optimal weighted total loss, defined as the sum of weighted type-I and type-II losses, we prove the logarithmic asymptotic
\[
L_n^*=\exp\{-nD_C^{\rw}(\bP,\bQ)+o(n)\},
\qquad n\to\infty,
\]
where $D_C^{\rw}$ is the weighted Chernoff information. The single-letter form of the exponent relies on a structural assumption that the weight factorises across observations, $\varphi(x_1^n)=\prod_{i=1}^n\varphi(x_i)$; this restriction is essential for the single-letter representation and should be distinguished from the weaker qualitative description ``multiplicative context weight''. The proof embeds the weighted geometric mixtures $\varphi p^\alpha q^{1-\alpha}$ into a likelihood-ratio exponential family and identifies the rate through its log-normaliser. We also derive concentration bounds for the tilted weighted log-likelihood, obtain closed forms for Gaussian, Poisson, and exponential models, and extend the exponent characterisation to finitely many hypotheses.

\medskip
\noindent\textbf{Keywords:} hypothesis testing; weighted Chernoff information; weighted Bhattacharyya coefficient; exponential family; information geometry; context-sensitive loss.

\medskip
\noindent\textbf{2020 Mathematics Subject Classification:} 62F03, 60F10.

\medskip
\noindent\textbf{Published version:} \emph{Entropy} \textbf{2026}, \emph{28}(5), 536. DOI: \href{https://doi.org/10.3390/e28050536}{10.3390/e28050536}.
\end{abstract}

\tableofcontents

\bigskip

\section{Introduction}\label{sec:intro}
 Let ${\cal X}$ be a Polish space with its Borel $\sigma$-algebra and let
$X_1^n=(X_1,\ldots,X_n)$ be i.i.d.\ ${\cal X}$-valued observations. We consider the simple hypotheses
\[
H_0:\ X_1^n\sim \bP^{\otimes n}
\qquad\text{versus}\qquad
H_1:\ X_1^n\sim \bQ^{\otimes n},
\]
where $\bP$ and $\bQ$ are probability measures on ${\cal X}$ dominated by a reference measure $\mu$.
Without loss of generality, one may take $\mu=\frac12(\bP+\bQ)$ and write
$p=\frac{\rd\bP}{\rd\mu}$ and $q=\frac{\rd\bQ}{\rd\mu}$.
In the unweighted setting, the optimal sum of type-I and type-II error probabilities
is characterized by $\mathrm{TV}(\bP^{\otimes n},\bQ^{\otimes n})$ and can be written as 
\begin{equation}\label{eq:OptimalSumWithoutWeight}
\int\limits_{{\mathcal X}^n}\min\{p(x_1^n),q(x_1^n)\}\,\rd\mu^{\otimes n}(x_1^n),
~~
p(x_1^n)=\prod\limits_{i=1}^n p(x_i), ~~ q(x_1^n)=\prod\limits_{i=1}^n q(x_i).
\end{equation}

In the standard (unweighted) Bayesian setting,
the decay rate of the optimal total error probability is governed by the Chernoff information \hl{}  \cite{Ch,Hoeffding65}:
\begin{align}\label{eq:ChernoffDistDefinition}
\rho_{\alpha}(p,q)&:=\int\limits_{\cal X} p(x)^{\alpha}q(x)^{1-\alpha}\,\rd\mu(x),\qquad \alpha\in[0,1],
\nonumber\\
\rho(p,q)&:=\inf_{\alpha\in[0,1]}\rho_{\alpha}(p,q),
\nonumber\\
D_C(\bP,\bQ)&:=-\ln \rho(p,q)=\max_{\alpha\in[0,1]}\bigl[-\ln \rho_{\alpha}(p,q)\bigr].
\end{align}

Here $\rho_{\alpha}$ is usually called the 
    $\alpha$-skewed Bhattacharyya affinity coefficient, and
    $\rho(p,q)=\inf\limits_{\alpha\in [0,1]}\rho_{\alpha}(p,q)$ is the affinity coefficient.
    In view of H\"older's inequality, $\rho_{\alpha}(p,q)\in [0,1]$.
  
    Chernoff also introduced an asymptotic efficiency notion for comparing two experimental designs  $e=\frac{\ln\rho_1}{\ln\rho_2}$ such that $n$ observations on one test are equivalent (i.e., they give asymptotically the same total loss as $n\to\infty$) to $en$ observations on another test; see \cite{Ch}.
    
\added{The paper studies a context-sensitive (weighted) analogue of this criterion and the logarithmic asymptotics of the optimal total loss as $n\to\infty$, in the framework of \cite{KS1,KS2}. In the weighted setting, a nonnegative weight function $\varphi(x_1^n)$ reweights the loss of a wrong decision according to the realised sample. Thus, $\varphi$ acts as a context factor that changes the relevance of different observations for the statistical task.}

{Weights of this form arise naturally whenever observations are not equally informative for the inference task. Two canonical mechanisms produce such $\varphi$. In \emph{importance-type reweighting}, samples drawn under a proposal density $g$ are used to perform inference with respect to a target $h$, and the Radon--Nikodym factor $\varphi(x)=h(x)/g(x)$ enters the loss as a strictly positive (non-indicator) tilt; this is the mechanism underlying the context-sensitive framework of \cite{KS1,KS2}. 
}

In applications, the informational value of an observation often depends on the underlying channel state. A canonical example, directly relevant to multiple hypothesis testing of transmission regimes, is a mobile communication channel modulated by a multi-zone coverage process (e.g., strong / weak / outage) along the receiver trajectory: samples acquired in outage carry little information about the regime and are weighted accordingly. Such reliability-weighted aggregation in multi-state channels was studied within multi-valued frameworks in  \cite{KalimulinaAICT2021,EsinKalimulina2026}.

Under the standard assumption that the modulating state at time $i$ is determined by $X_i$ alone, the resulting weight is a strictly positive bounded function $\varphi(x)$ and extends multiplicatively to $X_1^n$. The weighted Chernoff information $D_C^{\rw}(\bP,\bQ)$ then quantifies the effective discrimination rate under channel-dependent reliability and reduces to the classical rate $D_C(\bP,\bQ)$ in the limit $\varphi\equiv 1$. Further parametric instances (Gaussian, Poisson, exponential) are worked out in Section~\ref{sec:examples}.

\added{Throughout we assume that the weight is compatible with the i.i.d.\ structure and factorises across observations; by abuse of notation, $\varphi$ denotes both the one-step weight and its product extension.}

\begin{assumption}[Factorised weight]\label{ass:phi-factorised}
The weight function $\varphi({x}_1^n)$ satisfies
\begin{equation}\label{eq:phi-factorised}
\varphi({x}_1^n)=\prod_{i=1}^n\varphi(x_i), \qquad \varphi\ge 0.
\end{equation}
\end{assumption}

\added{Assumption~\ref{ass:phi-factorised} is the key single-letter hypothesis. It yields the weighted affinities
\[
\rho^{\rw}_\alpha(p,q)=\int_{\cal X}\varphi(x)p(x)^\alpha q(x)^{1-\alpha}\,\rd\mu(x),
\]
hence an additive logarithmic rate. For one observation and equal priors, the weighted Bayes risk equals
\[
\frac12\int_{\cal X}\varphi(x)\min\{p(x),q(x)\}\,\rd\mu(x).
\]
Since  $\min\{a,b\}\le a^\alpha b^{1-\alpha}$ for every $\alpha\in[0,1]$,
\[
\int_{\cal X}\varphi(x)\min\{p(x),q(x)\}\,\rd\mu(x)
\le \rho^{\rw}_\alpha(p,q),
\]
and therefore
\[
\int_{\cal X}\varphi(x)\min\{p(x),q(x)\}\,\rd\mu(x)
\le \exp\{-D_C^{\rw}(\bP,\bQ)\},
\]
where $D_C^{\rw}(\bP,\bQ):=\max_{\alpha\in[0,1]}\bigl[-\ln\rho^{\rw}_{\alpha}(p,q)\bigr]$ (see Definition~\ref{def:weighted-chernoff}). Under Assumption~\ref{ass:phi-factorised}, the same bound factorises over $n$ observations and yields the exponential scale $\exp\{-nD_C^{\rw}(\bP,\bQ)\}$. Theorem~\ref{thm:optimal-sum-loss} shows that this scale is exact on the logarithmic level.}

\subsection{\highlighting{Main} Result and Contributions }
\added{Let $L_n^*$ denote the optimal total context-sensitive loss (sum of weighted type-I and type-II losses, minimised over decision rules) for $n$ i.i.d.\ observations under Assumption~\ref{ass:phi-factorised}. Our main theorem (Theorem~\ref{thm:optimal-sum-loss}) proves the single-letter logarithmic asymptotic}
\begin{equation}\label{eq:intro-main-asymptotic}
L_n^*=\exp\{-nD_C^{\rw}(\bP,\bQ)+o(n)\},\qquad n\to\infty,
\end{equation}
\added{where the rate is the \emph{weighted Chernoff information}}
\begin{equation}\label{eq:intro-DCw}
D_C^{\rw}(\bP,\bQ)
=\max_{\alpha\in[0,1]}\left[-\ln\int_{\cal X}\varphi(x)\,p(x)^\alpha q(x)^{1-\alpha}\,\rd\mu(x)\right].
\end{equation}
\added{For $\varphi\equiv 1$, \eqref{eq:intro-DCw} reduces to the classical Chernoff information.

We also extend the exponent characterisation to a finite family of simple hypotheses: the optimal $M$-ary rate is the minimum pairwise weighted Chernoff information (cf.\ \cite{Nielsen2013} in the unweighted case). A central technical device is an exponential-family representation of the weighted geometric mixtures $\alpha\mapsto \varphi\,p^\alpha q^{1-\alpha}$. This embeds the mixtures into a likelihood-ratio exponential family and identifies the exponent through the corresponding log-normaliser. We further derive concentration bounds for tilted weighted log-likelihood ratios and closed-form expressions for $D_C^{\rw}$ in several parametric models; see Section~\ref{sec:examples}.}

{\subsection{Contributions}%
Items (N1)--(N4) below indicate new results; items (A1)--(A3) summarise definitions, geometric context, and tools adopted from the existing literature.
\begin{itemize}
\item[(N1)] \emph{(New.)} Theorem~\ref{thm:optimal-sum-loss} establishes the logarithmic asymptotic \eqref{eq:intro-main-asymptotic} for the optimal weighted total loss under the factorised weight of Assumption~\ref{ass:phi-factorised}, with rate given by the weighted Chernoff information \eqref{eq:intro-DCw}.
\item[(N2)] \emph{(New.)} The exponential-family representation of the weighted geometric mixtures $\alpha\mapsto\varphi\,p^\alpha q^{1-\alpha}$ (Section~\ref{subsec:exp-family}) and the resulting uniqueness of the optimal skewing parameter $\alpha^\ast$.
\item[(N3)] \emph{(New.)} Concentration bounds for the tilted weighted log-likelihood and the finite-$n$ tail bound of Theorem~\ref{thm:Lstar-tail} (Section~\ref{sec:wll}).
\item[(N4)] \emph{(New.)} Closed-form expressions for $D_C^{\rw}$ in the Gaussian, Poisson, and exponential models (Section~\ref{sec:examples}), and the $M$-ary extension showing that the optimal rate equals the minimum pairwise weighted Chernoff information.
\item[(A1)] \emph{(Adapted definitions.)} The definitions of the weighted Bhattacharyya affinities and the weighted Chernoff information generalise the classical unweighted quantities of \cite{Ch,Hoeffding65} and follow the context-sensitive framework of \cite{KS1,KS2}; their asymptotic and information-geometric consequences developed below are new.
\item[(A2)] \emph{(Geometric context.)} The information-geometric identities of Section~\ref{subsec:bregman} are derived in the spirit of the Chentsov--Amari--Nielsen framework \cite{Chentsov1982,AmariNagaoka,Amari2016,NielsenSPL2013} but are stated and proved for the tilted log-normaliser $\hat F(\theta)=\ln\int\varphi(x)\,e^{\theta^{\rT}t(x)+k(x)}\,\rd\mu(x)$; the unweighted limit $\varphi\equiv 1$ recovers the classical statements of \cite{N,NielsenSPL2013}.
\item[(A3)] \emph{(Standard tool.)} The concentration argument uses the Azuma--Hoeffding/~McDiarmid inequality \cite{A,McD}; the novelty lies in its application to the tilted weighted log-likelihood.
\end{itemize}}

\subsection{Related Work}
\added{The exponential theory of testing errors goes back to Chernoff \cite{Ch} and Hoeffding \cite{Hoeffding65}. The context-sensitive framework and the weighted information quantities used here were developed in \cite{KS1,KS2}.} {The information-geometric viewpoint on Chernoff information originates with Chentsov \cite{Chentsov1982}; the dually flat structure of exponential and mixture families and the associated $\alpha$-divergences are developed in \cite{AmariNagaoka,Amari2016}, and the Chernoff point is characterised as the intersection of an exponential geodesic with the Kullback--Leibler bisector in \cite{NielsenSPL2013}. For $\varphi\equiv 1$, the likelihood-ratio exponential family description is given in \cite{N}; the present paper extends this picture to the tilted integrand $\varphi\,p^\alpha q^{1-\alpha}$. The minimum-pairwise principle for multiple testing is due to \cite{Nielsen2013}. Weighting mechanisms for covariate-dependent relevance have also been studied outside the asymptotic error-exponent framework, e.g.,\ adaptive-kernel conditional-independence testing \cite{RenArtInt2025}.}

\subsection{Structure of the Paper}
\added{Section~\ref{sec:setup} introduces the weighted Bhattacharyya affinities and the weighted Chernoff information. Section~\ref{sec:main-results} proves the main asymptotic result \eqref{eq:intro-main-asymptotic} and develops the exponential-family and information-geometric identities. Subsection~\ref{sec:wll} studies the tilted weighted log-likelihood and derives finite-$n$ concentration bounds. Section~\ref{sec:examples} examines Gaussian, Poisson, and exponential models and includes the $M$-ary extension. Auxiliary computations are collected in the appendices.}

\section{Problem Set-Up and Weighted Divergences}\label{sec:setup}

\subsection{Context-Sensitive Losses and Weighted Total Variation}\label{subsec:setup-loss-tv}

We keep the binary i.i.d.\ model from Section~\ref{sec:intro} and work under
Assumption~\ref{ass:phi-factorised}. 
In particular,
\[
p(x_1^n)=\prod_{i=1}^n p(x_i),\qquad q(x_1^n)=\prod_{i=1}^n q(x_i),
\qquad
\varphi(x_1^n)=\prod_{i=1}^n \varphi(x_i).
\]
Define the $\varphi$-tilted (reweighted) densities
\[
p^*(x):=\frac{\varphi(x)p(x)}{E_\varphi(p)},\qquad
q^*(x):=\frac{\varphi(x)q(x)}{E_\varphi(q)},\qquad
E_\varphi(p):=\int \varphi(x)p(x)\,d\mu(x),
\]
(and similarly for $E_\varphi(q)$). Throughout this section, we assume that $E_{\varphi}(p),E_{\varphi}(q)\in(0,\infty)$.
(Equivalently, $\rho^{\rw}_0(p,q), \rho^{\rw}_1(p,q)\in (0,\infty).$)
Then $p^*,q^*$ are probability densities and, under
$\varphi(x_1^n)=\prod\limits_{i=1}^n\varphi(x_i)$, we have $\varphi(x_1^n)p(x_1^n)=E_\varphi(p)^n 
(p^*)^{\otimes n}(x_1^n)$ (and similarly for $q$).

Under Assumption~\ref{ass:phi-factorised}, we have
\[
\int_{{\cal X}^n}\varphi(x_1^n)\,p(x_1^n)\,d\mu^{\otimes n}=(E_{\varphi}(p))^n,
\qquad
\int_{{\cal X}^n}\varphi(x_1^n)\,q(x_1^n)\,d\mu^{\otimes n}=(E_{\varphi}(q))^n.
\]

Let $\mathcal D$ denote the class of (possibly randomised) decision rules
$D:{\cal X}^n\to[0,1]$, where $D(x_1^n)$ is the probability of deciding in favour of $H_1$
after observing $x_1^n$. (Deterministic rules correspond to $D\in\{0,1\}$.)

For $D\in\mathcal D$, define the context-sensitive type-I and type-II losses by
\begin{align}
\alpha_{\varphi}(D)
&:=\bE_{\bP^{\otimes n}}\!\big[\varphi(X_1^n)\,D(X_1^n)\big]
=\int_{{\cal X}^n}\varphi(x_1^n)\,D(x_1^n)\,p(x_1^n)\,\rd\mu^{\otimes n}(x_1^n),
\label{eq:alpha-varphi-def}\\
\beta_{\varphi}(D)
&:=\bE_{\bQ^{\otimes n}}\!\big[\varphi(X_1^n)\,(1-D(X_1^n))\big]
=\int_{{\cal X}^n}\varphi(x_1^n)\,(1-D(x_1^n))\,q(x_1^n)\,\rd\mu^{\otimes n}(x_1^n),
\label{eq:beta-varphi-def}
\end{align}
and the corresponding total loss
\[
L_n(D):=\alpha_{\varphi}(D)+\beta_{\varphi}(D),
\qquad
L_n^*:=\inf_{D\in\mathcal D} L_n(D).
\]

\begin{Proposition}[Pointwise form of the optimal total loss]\label{prop:pointwise-opt-loss}
For each $n\ge 1$,
\begin{equation}\label{eq:opt-loss-integral}
L_n^*
=\int_{{\cal X}^n}\varphi(x_1^n)\min\{p(x_1^n),q(x_1^n)\}\,\rd\mu^{\otimes n}(x_1^n).
\end{equation}
Moreover, an optimal (deterministic) decision rule is given by the likelihood-ratio test
\[
D_n^*(x_1^n)={\bf 1}\{q(x_1^n)\ge p(x_1^n)\}
\]
(with any measurable tie-breaking on $\{p=q\}$).
\end{Proposition}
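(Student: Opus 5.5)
The plan is to write the total loss of an arbitrary rule $D\in\mathcal D$ as a single integral over ${\cal X}^n$ and then minimise the integrand pointwise. By the definitions \eqref{eq:alpha-varphi-def}--\eqref{eq:beta-varphi-def}, linearity of the integral gives
\[
L_n(D)=\int_{{\cal X}^n}\varphi(x_1^n)\bigl[D(x_1^n)\,p(x_1^n)+(1-D(x_1^n))\,q(x_1^n)\bigr]\,\rd\mu^{\otimes n}(x_1^n).
\]
Linearity is legitimate here because both integrands are nonnegative. Moreover, under Assumption~\ref{ass:phi-factorised} and the standing condition $E_\varphi(p),E_\varphi(q)<\infty$, each term is bounded by $(E_\varphi(p))^n$ or $(E_\varphi(q))^n$, so it is finite and no $\infty-\infty$ issue arises.

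Next comes the pointwise bound. For fixed $x_1^n$ and any $d\in[0,1]$, the quantity $d\,p+(1-d)\,q$ is a convex combination of $p$ and $q$, so it is at least $\min\{p,q\}$. Multiplying by $\varphi(x_1^n)\ge 0$ and integrating gives $L_n(D)\ge\int\varphi\min\{p,q\}\,\rd\mu^{\otimes n}$ for every $D$. Taking the infimum over $D$ yields the lower bound $L_n^*\ge$ the right-hand side of \eqref{eq:opt-loss-integral}.

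For the matching upper bound, I check that $D_n^*={\bf 1}\{q\ge p\}$ attains equality pointwise. Where $q>p$ we have $D_n^*=1$, and the integrand is $\varphi p=\varphi\min\{p,q\}$. Where $q<p$ we have $D_n^*=0$, and the integrand is $\varphi q=\varphi\min\{p,q\}$. On $\{p=q\}$ every $d\in[0,1]$ gives $d\,p+(1-d)\,q=p=\min\{p,q\}$, which is why any measurable tie-breaking works. The rule $D_n^*$ is measurable because $p(x_1^n)$ and $q(x_1^n)$ are measurable products of measurable densities. Hence $L_n(D_n^*)$ equals the integral, which gives both \eqref{eq:opt-loss-integral} and the optimality of $D_n^*$.

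There is no genuine obstacle in this argument. The only point needing care is the bookkeeping in the first step, namely that the loss integrals are finite (or at least nonnegative) so they can be combined. Note also that $\varphi$ may vanish, in which case $D$ is arbitrary there without affecting optimality. The factorisation assumption is not actually needed for this proposition beyond guaranteeing finiteness.
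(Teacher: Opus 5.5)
Your proposal is correct and follows essentially the same route as the paper: write $L_n(D)$ as the single integral of $\varphi(x_1^n)\bigl[D\,p+(1-D)\,q\bigr]$ and minimise pointwise over $D(x_1^n)\in[0,1]$, which gives $\min\{p,q\}$ and is attained by ${\bf 1}\{q\ge p\}$. Your extra remarks on nonnegativity and finiteness of the integrands, on measurability of $D_n^*$, and on tie-breaking are accurate; the paper leaves these points implicit.
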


\begin{proof}
Fix $x_1^n$. The integrand in $L_n(D)$ equals
\[
\varphi(x_1^n)\big(p(x_1^n)D(x_1^n)+q(x_1^n)(1-D(x_1^n))\big)
=\varphi(x_1^n)\big(q(x_1^n)+D(x_1^n)(p(x_1^n)-q(x_1^n))\big).
\]
Minimising pointwise over $D(x_1^n)\in[0,1]$ yields $D_n^*(x_1^n)=1$ when $p(x_1^n)\le q(x_1^n)$
and $D_n^*(x_1^n)=0$ when $p(x_1^n)>q(x_1^n)$, giving \eqref{eq:opt-loss-integral}.
\end{proof}

We also use the weighted total variation distance
\begin{equation}\label{eq:TVw-def}
{\rm TV}_{\varphi}(\bP^{\otimes n},\bQ^{\otimes n})
:=\frac12\int_{{\cal X}^n}\varphi(x_1^n)\,\big|p(x_1^n)-q(x_1^n)\big|\,\rd\mu^{\otimes n}(x_1^n).
\end{equation}
\begin{remark}

For $\varphi\equiv 1$, this reduces to the usual total variation distance.
If $\varphi$ vanishes on a non-negligible set, ${\rm TV}_{\varphi}$ is, in general,
a pseudo-distance; this is sufficient for our purposes since it characterises the weighted losses.
\end{remark}

Using $\min\{a,b\}=\frac12(a+b-|a-b|)$ in \eqref{eq:opt-loss-integral} and the definition of ${\rm TV}_{\varphi}$ yields
\begin{equation}\label{eq:TVw-loss-identity}
L_n^*=\frac12\Big((E_{\varphi}(p))^n+(E_{\varphi}(q))^n\Big)-{\rm TV}_{\varphi}(\bP^{\otimes n},\bQ^{\otimes n}).
\end{equation}

\subsection{Weighted Affinities and Chernoff Information}\label{subsec:setup-div}

We introduce the weighted Bhattacharyya affinities and the weighted Chernoff information.
Assume that $\rho^{\rw}_\alpha(p,q)\in(0,\infty)$ for all $\alpha\in[0,1]$.

\begin{definition}[Weighted Bhattacharyya coefficient and distance]\label{def:weighted-bhat}
For $\alpha\in[0,1]$ define the weighted $\alpha$-skewed Bhattacharyya affinity coefficient
\begin{equation}\label{eq:rho-alpha-w}
\rho^{\rw}_{\alpha}(p,q):=\int_{\cal X} \varphi(x)\, p(x)^{\alpha} q(x)^{1-\alpha}\,\rd\mu(x),
\end{equation}
and the corresponding weighted Bhattacharyya distance
\begin{equation}\label{eq:Db-alpha-w}
D^{\rw}_{B,\alpha}(p,q):=-\ln \rho^{\rw}_{\alpha}(p,q).
\end{equation}
\end{definition}

\begin{definition}[Weighted Chernoff information]\label{def:weighted-chernoff}
The weighted Chernoff information divergence between $\bP$ and $\bQ$ is
\begin{equation}\label{eq:DCw-def}
D^{\rw}_C({\bP}, {\bQ})=\max_{\alpha\in [0,1]}\left[-\ln \int_{\cal X} \varphi(x)\, p(x)^{\alpha}q(x)^{1-\alpha}\,\rd\mu(x)\right]
=\max_{\alpha\in[0,1]} D^{\rw}_{B,\alpha}(p,q).
\end{equation}
A maximiser $\alpha^*=\alpha^*(p,q)$ in \eqref{eq:DCw-def} is called the optimal Chernoff parameter.
\end{definition}

\begin{remark}\label{rem:chernoff-symmetry}
The weighted Chernoff information is symmetric:
$D_C^{\rw}(\bP,\bQ)=D_C^{\rw}(\bQ,\bP)$, since
$\rho^{\rw}_{\alpha}(p,q)=\rho^{\rw}_{1-\alpha}(q,p)$.
In general, however, $D_C^{\rw}$ does not satisfy the triangle inequality and is therefore a divergence rather than a metric.
\end{remark}
\begin{remark}
\label{rem:single-letter}
Under Assumption~\ref{ass:phi-factorised}, for every $\alpha\in[0,1]$,
\[
\int_{{\cal X}^n}\varphi(x_1^n)\,p(x_1^n)^{\alpha}q(x_1^n)^{1-\alpha}\,\rd\mu^{\otimes n}(x_1^n)
=\bigl(\rho^{\rw}_{\alpha}(p,q)\bigr)^n.
\]
Consequently, the weighted Bhattacharyya distances are additive in $n$ and the corresponding
Chernoff exponent is of single-letter form.
\end{remark}

\section{{Asymptotics and Information-Geometric Identities}}\label{sec:main-results}

{Before stating the main theorem, we separate the two optimisations that appear throughout this section and should not be conflated. The total loss $L_n(D)=\alpha_\varphi(D)+\beta_\varphi(D)$ is a non-negative functional of the decision rule and is \emph{minimised} over $D\in\mathcal D$, giving the optimal loss $L_n^*=\inf_{D\in\mathcal D} L_n(D)$. The map $\alpha\mapsto-\ln\rho^{\rw}_\alpha(p,q)$ is a non-negative concave functional of the skewing parameter and is \emph{maximised} over $\alpha\in[0,1]$, giving the weighted Chernoff information $D_C^{\rw}(\bP,\bQ)=\sup_{\alpha\in[0,1]}[-\ln\rho^{\rw}_\alpha(p,q)]$. Theorem~\ref{thm:optimal-sum-loss} below connects the two via the single-letter asymptotic $L_n^*=\exp\{-nD_C^{\rw}(\bP,\bQ)+o(n)\}$.}

\subsection{Asymptotics of the Optimal Sum of Losses}\label{subsec:main-loss}

Recall from Section~\ref{sec:setup} that
\[
L_n^*:=\inf_{D\in\mathcal D}\bigl[\alpha_\varphi(D)+\beta_\varphi(D)\bigr],
\]
and that Proposition~\ref{prop:pointwise-opt-loss} yields \eqref{eq:opt-loss-integral}.
The next theorem identifies its exact logarithmic asymptotic rate with the weighted Chernoff
information from Definition~\ref{def:weighted-chernoff}.

\begin{Theorem}[Optimal sum of context-sensitive losses]\label{thm:optimal-sum-loss}
Consider the binary hypotheses
$H_0:\ X_1^n\sim \bP^{\otimes n}$ versus $H_1:\ X_1^n\sim \bQ^{\otimes n}$,
under Assumption~\ref{ass:phi-factorised}. Assume also
\begin{equation}\label{finitemean}
\sup\limits_{\alpha\in [0,1]}\int \varphi(x) |\ln\frac{p(x)}{q(x)}|p(x)^{\alpha}q(x)^{1-\alpha}{\rd}\mu(x)<\infty.
\end{equation}
Let $\mathcal D$ be the class of (possibly randomised) decision rules
$D:{\cal X}^n\to[0,1]$, and let $\alpha_\varphi(D)$, $\beta_\varphi(D)$ be defined by
\eqref{eq:alpha-varphi-def}--\eqref{eq:beta-varphi-def}.
Assume that $p,q>0$ $\mu$-a.e.\ and that $\rho^{\rw}_\alpha(p,q)\in(0,\infty)$ for all $\alpha\in[0,1]$.

Then, as $n\to\infty$,
\begin{equation}\label{eq:optimal-sum-loss}
L_n^*=\exp\{-nD_C^{\rw}(\bP,\bQ)+o(n)\}.
\end{equation}
Equivalently,
\[
\lim_{n\to\infty}-\frac{1}{n}\ln L_n^* = D_C^{\rw}(\bP,\bQ).
\]
\end{Theorem}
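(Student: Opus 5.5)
The plan is to prove matching upper and lower bounds for $-\frac1n\ln L_n^*$, starting from the integral representation \eqref{eq:opt-loss-integral} of Proposition~\ref{prop:pointwise-opt-loss}.

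\emph{Upper bound on $L_n^*$.} Use $\min\{a,b\}\le a^\alpha b^{1-\alpha}$ pointwise in \eqref{eq:opt-loss-integral} and apply Remark~\ref{rem:single-letter}. This gives $L_n^*\le(\rho^{\rw}_\alpha(p,q))^n$ for every $\alpha\in[0,1]$, and hence $L_n^*\le\exp\{-nD_C^{\rw}(\bP,\bQ)\}$. The maximum in Definition~\ref{def:weighted-chernoff} is attained: $\alpha\mapsto\ln\rho^{\rw}_\alpha$ is convex by Hölder and finite on $[0,1]$ by hypothesis, so it is continuous on the open interval. At the endpoints, continuity follows from dominated convergence with dominating function $\varphi\max(p,q)$. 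This already gives $\liminf_n -\frac1n\ln L_n^*\ge D_C^{\rw}$.

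\emph{Lower bound, via a tilted change of measure.} Let $\alpha^*$ be a maximiser and define the probability density
\[
r(x)=\varphi(x)p(x)^{\alpha^*}q(x)^{1-\alpha^*}/\rho^{\rw}_{\alpha^*}(p,q),
\]
which is well defined because $p,q>0$ a.e. Set $\Lambda(x)=\ln(p(x)/q(x))$ and $S_n=\sum_{i=1}^n\Lambda(X_i)$. On $\mathcal X^n$,
\[
\varphi(x_1^n)\min\{p,q\}(x_1^n)=(\rho^{\rw}_{\alpha^*})^n\, r^{\otimes n}(x_1^n)\,\min\{e^{(1-\alpha^*)S_n},e^{-\alpha^* S_n}\}.
\]
Restricting the integral to $\{|S_n|\le \varepsilon n\}$ yields
\[
L_n^*\ge(\rho^{\rw}_{\alpha^*})^n e^{-\varepsilon n}\, R^{\otimes n}(|S_n|\le\varepsilon n),
\]
where $R$ is the law with density $r$. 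The function $g(\alpha)=\ln\rho^{\rw}_\alpha$ is convex, and condition \eqref{finitemean} justifies differentiating under the integral, so $g'(\alpha)=\bE_{R_\alpha}[\Lambda]$ with $\bE_{R_\alpha}|\Lambda|<\infty$ uniformly in $\alpha$. If $\alpha^*\in(0,1)$, first-order optimality gives $\bE_R\Lambda=0$. The weak law of large numbers (only a first moment is needed) then gives $R^{\otimes n}(|S_n|\le\varepsilon n)\to1$. Hence $\limsup_n -\frac1n\ln L_n^*\le D_C^{\rw}+\varepsilon$, and letting $\varepsilon\downarrow0$ finishes this case.

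\emph{Boundary case (the main obstacle).} Suppose $\alpha^*=0$, which means $g'(0^+)=\bE_{R_0}\Lambda\ge0$; the case $\alpha^*=1$ is symmetric. Here $\bE_R\Lambda$ may be strictly positive, so the window $|S_n|\le\varepsilon n$ no longer has probability tending to one.
\begin{itemize}
\item Use instead the one-sided region $\{S_n\ge -\varepsilon n\}$. There $\min\{e^{S_n},1\}\ge e^{-\varepsilon n}$ at $\alpha^*=0$, since the integrand is $\varphi q\min\{e^{S_n},1\}$.
\item Under $R=R_0$ with $\bE_{R_0}\Lambda\ge0$, the WLLN gives $R^{\otimes n}(S_n\ge-\varepsilon n)\to1$.
\item This yields $L_n^*\ge(\rho^{\rw}_0)^n e^{-\varepsilon n}(1-o(1))$, matching the upper bound since $D_C^{\rw}=-\ln\rho^{\rw}_0$ in this case.
\end{itemize}
This one-sided argument in fact covers the interior case uniformly as well. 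Alternatively, one can use the region $\{-\varepsilon n\le S_n\}$ or $\{S_n\le\varepsilon n\}$ according to the sign of $g'$ at the maximiser. In each case one must carefully justify the differentiation/first-moment step from \eqref{finitemean}, which is where I expect the main technical care to be needed.
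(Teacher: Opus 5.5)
Your proposal is correct and follows essentially the paper's route: the bound $\min\{a,b\}\le a^\alpha b^{1-\alpha}$ with factorisation gives the upper bound, and the change of measure to $r_{\alpha^*}^{\otimes n}$ plus the law of large numbers (zero tilted mean at an interior $\alpha^*$) gives the lower bound, with the endpoints handled separately---and your one-sided window $\{S_n\ge-\varepsilon n\}$ is slightly more robust than the paper's almost-sure argument with $\mathbf{1}\{S_n>0\}$, since it also covers a vanishing tilted mean at the endpoint. The only inaccuracy is your side remark that the one-sided region also covers the interior case: for $\alpha^*\in(0,1)$ the factor $e^{-\alpha^* S_n}$ is not bounded below on $\{S_n>\varepsilon n\}$, so there the two-sided window $\{|S_n|\le\varepsilon n\}$ you already used is genuinely needed.
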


\begin{proof}
\added{By Proposition~\ref{prop:pointwise-opt-loss},
\[
L_n^*=\int_{{\cal X}^n}\varphi(x_1^n)\min\{p(x_1^n),q(x_1^n)\}\,\rd\mu^{\otimes n}(x_1^n).
\]
For any $\alpha\in[0,1]$, $\min\{a,b\}\le a^\alpha b^{1-\alpha}$, hence, by factorisation,
\[
L_n^*\le \bigl(\rho^{\rw}_\alpha(p,q)\bigr)^n.
\]
Taking the infimum over $\alpha\in[0,1]$ gives
\[
\liminf_{n\to\infty}-\frac1n\ln L_n^*\ge D_C^{\rw}(\bP,\bQ).
\]

Now fix $\alpha^*\in\arg\min_{\alpha\in[0,1]}\rho^{\rw}_\alpha(p,q)$ and define
\[
r_{\alpha^*}(x):=\frac{\varphi(x)p(x)^{\alpha^*}q(x)^{1-\alpha^*}}{\rho^{\rw}_{\alpha^*}(p,q)},
\qquad
S_n:=\sum_{i=1}^n\ln\frac{p(X_i)}{q(X_i)}.
\]
A direct change of measure yields
\[
L_n^*
=\bigl(\rho^{\rw}_{\alpha^*}(p,q)\bigr)^n
\bE_{r_{\alpha^*}^{\otimes n}}\!\Big[
e^{-\alpha^*S_n}\mathbf{1}\{S_n>0\}
+e^{(1-\alpha^*)S_n}\mathbf{1}\{S_n\le 0\}
\Big].
\]
The bracket is bounded above by $1$.

Let $F(\alpha):=\ln\rho^{\rw}_\alpha(p,q)$, it is easy to check that
$F(\alpha)$ is a convex function.
Under the regularity assumption of (\ref{finitemean}),
\[
F'(\alpha)=\bE_{r_\alpha}\!\left[\ln\frac{p(X)}{q(X)}\right].
\]
If $\alpha^*\in(0,1)$, then $F'(\alpha^*)=0$. Hence, in view of LLN, $S_n/n\to 0$ in $r_{\alpha^*}$-probability. Therefore, for every $\varepsilon>0$,
\[
\bE_{r_{\alpha^*}^{\otimes n}}\!\Big[
e^{(1-\alpha^*)S_n}\mathbf{1}\{S_n \leq 0\}
+e^{-\alpha^*S_n}\mathbf{1}\{S_n > 0\}
\Big]
\ge e^{-\varepsilon n}\,
r_{\alpha^*}^{\otimes n}\big(|S_n|\le \varepsilon n\big)
=\exp\{o(n)\}.
\]
Combined with the upper bound by $1$, this implies
\[
L_n^*=\exp\{-nD_C^{\rw}(\bP,\bQ)+o(n)\}.
\]

In the boundary case $\alpha^*=0$, we have the mean value $m>0$, and
$\mathbf{1}(S_n>0)e^{-\alpha^* S_n}\vert_{\alpha^*=0}\to 1$ a.s. as $n\to\infty$ by the strong
LLN. Similarly, in the case $\alpha^*=1$, we have the mean value $m\leq 0$, and
$\mathbf{1}(S_n\leq 0)e^{(1-\alpha^*)S_n}\vert_{\alpha^*=1}\to 1$ a.s.
as $n\to\infty$. This completes the proof.}
\end{proof}

\begin{Corollary}[Asymptotics of the weighted total variation]\label{cor:TVw-asympt}
Under the assumptions of Theorem~\ref{thm:optimal-sum-loss}, the weighted total variation satisfies
\begin{equation}\label{eq:TVw-asympt}
{\rm TV}_{\varphi}(\bP^{\otimes n},\bQ^{\otimes n})
=\frac{1}{2}\Big((E_{\varphi}(p))^n+(E_{\varphi}(q))^n\Big)
-\exp\{-nD^{\rw}_{C}(\bP,\bQ)+o(n)\},~~ n\to\infty,
\end{equation}
where $E_{\varphi}(r)=\int_{\cal X}\varphi(x)r(x)\,\rd\mu(x)$.
\end{Corollary}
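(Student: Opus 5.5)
The plan is to read the corollary off directly from the identity \eqref{eq:TVw-loss-identity} combined with Theorem~\ref{thm:optimal-sum-loss}. Proposition~\ref{prop:pointwise-opt-loss} expresses $L_n^*$ as the integral of $\varphi\min\{p,q\}$ over ${\cal X}^n$. Since $\min\{a,b\}=\frac12(a+b-|a-b|)$ pointwise, and since under Assumption~\ref{ass:phi-factorised} the integrals of $\varphi p$ and $\varphi q$ over ${\cal X}^n$ equal $(E_\varphi(p))^n$ and $(E_\varphi(q))^n$, we get \eqref{eq:TVw-loss-identity}:
\[
L_n^*=\tfrac12\bigl((E_\varphi(p))^n+(E_\varphi(q))^n\bigr)-{\rm TV}_{\varphi}(\bP^{\otimes n},\bQ^{\otimes n}).
\]
Rearranging gives
\[
{\rm TV}_{\varphi}=\tfrac12\bigl((E_\varphi(p))^n+(E_\varphi(q))^n\bigr)-L_n^*.
\]
This is an exact identity for every $n$. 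Substituting the asymptotic $L_n^*=\exp\{-nD_C^{\rw}(\bP,\bQ)+o(n)\}$ from Theorem~\ref{thm:optimal-sum-loss} then yields \eqref{eq:TVw-asympt} verbatim.

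The only point needing care is finiteness, because subtracting integrals is legitimate only when they are finite. Finiteness is guaranteed by the standing assumption $\rho^{\rw}_0(p,q)=E_\varphi(q)\in(0,\infty)$ and $\rho^{\rw}_1(p,q)=E_\varphi(p)\in(0,\infty)$. Moreover $\varphi|p-q|\le\varphi(p+q)$, so all three integrals are finite and the splitting of the integral is justified.

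There is no real obstacle, since the statement is an exact identity into which the theorem is substituted. I would add one remark: the $o(n)$ term is inherited unchanged. The displayed relation should be read as an exact expression for ${\rm TV}_\varphi$, with the correction term having exponential rate $D_C^{\rw}$. It should not be read as a claim that this term is negligible relative to the leading one. The leading term may itself be of order $\max\{E_\varphi(p),E_\varphi(q)\}^n$, which can be smaller or larger than $1$. Also, $E_\varphi(p)=\rho^{\rw}_1\ge e^{-D_C^{\rw}}$ by the definition of $D_C^{\rw}$ as a maximum over $\alpha\in[0,1]$. Hence the subtracted term never dominates the leading term exponentially, which is consistent with ${\rm TV}_\varphi\ge0$.
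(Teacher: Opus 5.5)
Your proposal is correct and matches the paper's proof, which likewise combines the exact identity \eqref{eq:TVw-loss-identity} with the asymptotic \eqref{eq:optimal-sum-loss} from Theorem~\ref{thm:optimal-sum-loss}. Your extra checks are also correct and make explicit what the paper leaves implicit: finiteness follows from $E_\varphi(p),E_\varphi(q)\in(0,\infty)$, and $e^{-nD_C^{\rw}}\le\min\{E_\varphi(p),E_\varphi(q)\}^n$ holds.
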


\begin{proof}
Combine the identity \eqref{eq:TVw-loss-identity} with \eqref{eq:optimal-sum-loss}.
\end{proof}

\begin{remark}\label{rem:bhat-chernoff}
The weighted $\alpha$-skewed Bhattacharyya distance \eqref{eq:Db-alpha-w} appears in many papers, see e.g.,\ \cite{N}.
Definition~\ref{def:weighted-chernoff} shows that the weighted Chernoff information divergence
is the maximally skewed weighted Bhattacharyya distance.
\end{remark}
\subsection{Exponential-Family Representation and Uniqueness of $\alpha^*$}\label{subsec:exp-family}

In order to develop an effective computational procedure and to connect the weighted Chernoff information to information geometry,
we embed the weighted geometric mixtures of $p$ and $q$ into a one-parameter likelihood-ratio exponential family.
For $\alpha\in[0,1]$ define
\begin{equation}\label{eq:Zpq-def}
Z_{pq}(\alpha)
:=\int_{\cal X}\varphi(x)\,p(x)^{\alpha}q(x)^{1-\alpha}\,\rd\mu(x)
=\rho^{\rw}_{\alpha}(p,q),
\end{equation}
and the corresponding normalised density
\begin{equation}\label{eq:Epq-def}
{\cal E}_{pq}
=\left\{(pq)_{\alpha}(x):=\frac{\varphi(x)\, p(x)^{\alpha}q(x)^{1-\alpha}}{Z_{pq}(\alpha)}\;:\;\alpha\in [0,1]\right\}.
\end{equation}
By assumption $Z_{pq}(\alpha)\in(0,\infty)$, so $(pq)_\alpha$ is well-defined as a probability density w.r.t.\ $\mu$.

Set $t(x):=\ln\frac{p(x)}{q(x)}$ and $k_{pq}(x):=\ln\varphi(x)+\ln q(x)$.
Then, $(pq)_\alpha$ admits the exponential-family form
\begin{align}\label{eq:Epq-expform}
(pq)_{\alpha}(x)
&=\exp\!\left\{\alpha\,t(x)-F_{pq}(\alpha)+k_{pq}(x)\right\},
\\
F_{pq}(\alpha):&=\ln Z_{pq}(\alpha)=-D^{\rw}_{B,\alpha}(p,q).
\end{align}
In particular, $t(X)$ is a sufficient statistic for the family ${\cal E}_{pq}$.

The log-normaliser $F_{pq}$ is convex on $[0,1]$; if $\ln\frac{p}{q}$ is not $\mu$-a.e.\ constant on $\{\varphi>0\}$,
then $F_{pq}$ is strictly convex and the maximiser $\alpha^*$ in \eqref{eq:DCw-def} is unique.
By H\"older's inequality,
\[
Z_{pq}(\alpha)=\int_{\cal X} (\varphi p)^{\alpha}(\varphi q)^{1-\alpha}\,\rd\mu
\le (E_{\varphi}(p))^{\alpha}(E_{\varphi}(q))^{1-\alpha}.
\]
Finally, note that $Z_{pq}(1)=E_{\varphi}(p)$ and $Z_{pq}(0)=E_{\varphi}(q)$; hence,
\[
(pq)_1(x)=\frac{\varphi(x)p(x)}{E_{\varphi}(p)},
\qquad
(pq)_0(x)=\frac{\varphi(x)q(x)}{E_{\varphi}(q)},
\]
so ${\cal E}_{pq}$ is an exponential arc between the tilted versions of $\bP$ and $\bQ$.
By this definition, the following identities hold:
\begin{equation}
	\begin{array}{l}
		D^{\rw}_C(p,q)=D^{\rw}_{B,\alpha^*(p,q)}(p,q)=D^{\rw}_{B,\alpha^*(q,p)}(q,p)=D^{\rw}_C(q,p).
	\end{array}	
\end{equation}

\subsection{Weighted Bregman Divergence and Information-Geometric Identities}\label{subsec:bregman}

\subsubsection{\highlighting{Weighted}  KL Divergence and Weighted Bregman Divergence} This subsection collects information-geometric identities useful for analysing
$\rho^{\rw}_\alpha$ and for computing the optimal Chernoff parameter $\alpha^*$.
We follow \cite{KS1} for weighted Bregman divergences.

Let ${\cal E}=\{p_\theta:\theta\in\Theta\subset\R^d\}$ be a regular exponential family of densities
(with respect to $\mu$),
\begin{equation}\label{eq:expfam-def}
p_\theta(x)=\exp\{\theta^{\rT}t(x)-F(\theta)+k(x)\},\qquad x\in{\cal X}.
\end{equation}
For a density $r$, set $E_\varphi(r):=\int_{\cal X}\varphi(x)\,r(x)\,\rd\mu(x)$ and write
$E_\varphi(\theta):=E_\varphi(p_\theta)$.

Assume $E_\varphi(\theta)\in(0,\infty)$ for $\theta\in\Theta$ and define the tilted log-normaliser
\begin{equation}\label{eq:Fhat-def}
\hat F(\theta)
:=\ln\int_{\cal X}\varphi(x)\,e^{\theta^{\rT}t(x)+k(x)}\,\rd\mu(x)
=F(\theta)+\ln E_\varphi(\theta).
\end{equation}
Equivalently, the tilted density is
\begin{equation}\label{eq:adjoint-density}
p_\theta^*(x)=\frac{\varphi(x)\,p_\theta(x)}{E_\varphi(\theta)}.
\end{equation}

\begin{definition}[Weighted Kullback--Leibler divergence]\label{def:wkl}
For densities $p,q$ on ${\cal X}$ define
\begin{equation}\label{eq:KLw-def}
D^{\rw}_{\rm KL}(p\|q)
:=\int_{\cal X}\varphi(x)\,p(x)\,\ln\frac{p(x)}{q(x)}\,\rd\mu(x),
\end{equation}
whenever the integral is well defined in $(-\infty,\infty]$.
\end{definition}

\begin{definition}[Weighted Bregman divergence]\label{def:wbreg}
The weighted Bregman divergence associated with $(F,\hat F)$ is
\begin{align}
B^{\rw}_{\varphi,F}(\theta_1,\theta_2)
:&=e^{\hat F(\theta_2)-F(\theta_2)}
\Big[F(\theta_1)-F(\theta_2)-(\theta_1-\theta_2)^{\rT}\nabla \hat F(\theta_2)\Big]
\label{eq:weighted-bregman}
\\
&=E_\varphi(\theta_2)\Big[F(\theta_1)-F(\theta_2)-(\theta_1-\theta_2)^{\rT}\nabla \hat F(\theta_2)\Big]. \nonumber
\end{align}
\end{definition}


\begin{Proposition}[Weighted KL as weighted Bregman divergence]\label{prop:kl-bregman}
For a regular exponential family ${\cal E}=\{\bP_{\theta}, \theta\in \Theta\}$,
assume that the integral in \eqref{eq:KLw-def} is well-defined in $(-\infty,\infty]$.
Then for any $\theta_1,\theta_2\in\Theta$,
\begin{equation}\label{eq:kl-bregman}
D^{\rw}_{\rm KL}(p_{\theta_1}\|p_{\theta_2})
=B^{\rw}_{\varphi,F}(\theta_2,\theta_1).
\end{equation}
\end{Proposition}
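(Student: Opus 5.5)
The plan is a direct computation: expand the log-likelihood ratio inside the exponential family, integrate it against the tilted measure $\varphi\,p_{\theta_1}\,\rd\mu$, and recognise the resulting mean of $t$ as a gradient of the tilted log-normaliser $\hat F$. Reading \eqref{eq:weighted-bregman} with $(\theta_1,\theta_2)$ replaced by $(\theta_2,\theta_1)$, the claimed identity \eqref{eq:kl-bregman} is
\[
D^{\rw}_{\rm KL}(p_{\theta_1}\|p_{\theta_2})
=E_\varphi(\theta_1)\Big[F(\theta_2)-F(\theta_1)-(\theta_2-\theta_1)^{\rT}\nabla\hat F(\theta_1)\Big].
\]

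\emph{Step 1 (log-ratio).} From \eqref{eq:expfam-def}, the carrier $k(x)$ cancels, so
\[
\ln\frac{p_{\theta_1}(x)}{p_{\theta_2}(x)}=(\theta_1-\theta_2)^{\rT}t(x)-F(\theta_1)+F(\theta_2).
\]

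\emph{Step 2 (integrate against $\varphi\,p_{\theta_1}$).} Inserting this into \eqref{eq:KLw-def} and using $\int\varphi\,p_{\theta_1}\,\rd\mu=E_\varphi(\theta_1)$ gives
\[
D^{\rw}_{\rm KL}(p_{\theta_1}\|p_{\theta_2})
=(\theta_1-\theta_2)^{\rT}\!\int_{\cal X}\varphi\,p_{\theta_1}\,t\,\rd\mu
+E_\varphi(\theta_1)\bigl[F(\theta_2)-F(\theta_1)\bigr].
\]
Splitting the integral into these two pieces is legitimate provided $\int\varphi\,p_{\theta_1}|t|\,\rd\mu<\infty$, since the constant term is finite. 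Finiteness of the weighted KL integral alone does not give this when $\theta_1-\theta_2$ does not control every coordinate of $t$, so it has to be obtained in Step 3.

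\emph{Step 3 (identify the $t$-moment with $\nabla\hat F$).} Differentiating \eqref{eq:Fhat-def} under the integral sign gives
\[
\nabla\hat F(\theta)
=\frac{\int\varphi\,t\,e^{\theta^{\rT}t+k}\,\rd\mu}{\int\varphi\,e^{\theta^{\rT}t+k}\,\rd\mu}
=\bE_{p^*_\theta}[t],
\]
which is the mean of $t$ under the tilted density \eqref{eq:adjoint-density}. Hence $\int\varphi\,p_{\theta_1}\,t\,\rd\mu=E_\varphi(\theta_1)\nabla\hat F(\theta_1)$. Substituting into Step 2 and factoring out $E_\varphi(\theta_1)=e^{\hat F(\theta_1)-F(\theta_1)}$ yields exactly the displayed right-hand side, which is $B^{\rw}_{\varphi,F}(\theta_2,\theta_1)$.

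The main obstacle is justifying Step 3, together with the integrability needed in Step 2. The family $\{p^*_\theta\}$ is itself an exponential family with log-normaliser $\hat F$, so the standard theory applies. If $\theta_1$ lies in the interior of the effective domain $\{\theta:\hat F(\theta)<\infty\}$, then $\hat F$ is finite near $\theta_1$, and the standard dominated-convergence argument for exponential families gives both differentiability of $\hat F$ at $\theta_1$ and finiteness of all moments of $t$ under $p^*_{\theta_1}$. I would therefore state this interiority condition as the precise meaning of ``regular'' for the tilted family, assumed implicitly in the use of $\nabla\hat F$ in Definition~\ref{def:wbreg}. Under it, the moment bound covers Step 2 and the derivative formula covers Step 3, after which the identity is pure algebra.
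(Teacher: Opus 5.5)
Your proof is correct and follows the paper's own derivation step for step: expand the log-ratio so that $k$ cancels, integrate against $\varphi\,p_{\theta_1}\,\rd\mu$, and identify $\int\varphi\,t\,p_{\theta_1}\,\rd\mu$ with $E_\varphi(\theta_1)\nabla\hat F(\theta_1)$ by differentiating $\hat F$ under the integral sign. Your interiority condition is in fact automatic here: the paper's standing assumption $E_\varphi(\theta)\in(0,\infty)$ for all $\theta$ in the open parameter set $\Theta$ of a regular family makes $\hat F$ finite on a neighbourhood of $\theta_1$, which supplies both the differentiability and the moment bound you need.
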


\begin{proof}
This identity is stated in \cite[Proposition~4.1]{KS1}; we give a short derivation for completeness.
By \eqref{eq:expfam-def},
\[
\ln\frac{p_{\theta_1}(x)}{p_{\theta_2}(x)}
=(\theta_1-\theta_2)^{\rT}t(x)-(F(\theta_1)-F(\theta_2)).
\]
Substituting this into \eqref{eq:KLw-def} yields
\[
D^{\rw}_{\rm KL}(p_{\theta_1}\|p_{\theta_2})
=(\theta_1-\theta_2)^{\rT}\!\int_{\cal X}\varphi(x)t(x)p_{\theta_1}(x)\,\rd\mu(x)
-(F(\theta_1)-F(\theta_2))\,E_\varphi(\theta_1).
\]
Using \eqref{eq:Fhat-def} and differentiation under the integral sign (regularity of ${\cal E}$),
\[
\nabla \hat F(\theta_1)
=\frac{\int_{\cal X}\varphi(x)t(x)e^{\theta_1^{\rT}t(x)+k(x)}\,\rd\mu(x)}
{\int_{\cal X}\varphi(x)e^{\theta_1^{\rT}t(x)+k(x)}\,\rd\mu(x)}
=\frac{\int_{\cal X}\varphi(x)t(x)p_{\theta_1}(x)\,\rd\mu(x)}{E_\varphi(\theta_1)}.
\]
Hence, $\int \varphi t p_{\theta_1}\,\rd\mu = E_\varphi(\theta_1)\nabla\hat F(\theta_1)$, and therefore
\[
D^{\rw}_{\rm KL}(p_{\theta_1}\|p_{\theta_2})
=E_\varphi(\theta_1)\Big[F(\theta_2)-F(\theta_1)-(\theta_2-\theta_1)^{\rT}\nabla \hat F(\theta_1)\Big]
=B^{\rw}_{\varphi,F}(\theta_2,\theta_1),
\]
which proves \eqref{eq:kl-bregman}.
\end{proof}

\begin{Proposition}[Primal--dual identities for (weighted) Bregman divergences]\label{prop:weighted-bregman-duality}
Let $F$ be a log-normaliser of a regular exponential family and let $F^*$ denote its Legendre transform.
Write $\theta^*=\nabla F(\theta)$ and $\theta=\nabla F^*(\theta^*)$.

\smallskip\noindent
(a) \emph{Weighted one-parameter identity.}
Assume $d=1$ (one-parameter case) and let $\theta_i^*:=F'(\theta_i)$. Then, the following weighted analogue
of the classical Bregman duality holds:
\begin{equation}\label{eq:weighted-bregman-duality}
B^{\rw}_{\varphi,F}(\theta_1,\theta_2)
=
B^{\rw}_{\varphi,F^*}(\theta_2^*,\theta_1^*)
-(\theta_1-\theta_2)\,\ln E_\varphi(\theta_2)
+(\theta_2^*-\theta_1^*)\,\ln E_\varphi(\theta_1),
\end{equation}
where $B^{\rw}_{\varphi,F}$ is as in Definition~\ref{def:wbreg} and $B^{\rw}_{\varphi,F^*}$ is defined analogously
(with $F$ replaced by $F^*$ and with the convention $E_\varphi(\theta^*):=E_\varphi(\theta)$ under $\theta=\nabla F^*(\theta^*)$).

\smallskip\noindent
(b) \emph{Classical identity.}
For any $d\ge 1$, the (unweighted) Bregman divergence admits the standard Legendre representation
\begin{equation}\label{eq:bregman-legendre}
B_F(\theta_0,\theta_1)=F(\theta_0)+F^*(\theta_1^*)-\theta_0^{\rT}\theta_1^*,
\qquad \theta_1^*=\nabla F(\theta_1),
\end{equation}
where $B_F(\theta_0,\theta_1)=F(\theta_0)-F(\theta_1)-(\theta_0-\theta_1)^{\rT}\nabla F(\theta_1)$
is the usual (unweighted) Bregman divergence.
\end{Proposition}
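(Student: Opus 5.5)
Part (b) is the classical statement and I would dispose of it first, since part (a) will be reduced to it. By the Fenchel--Young equality for the Legendre pair $(F,F^*)$ at the point $\theta_1$, we have $F^*(\theta_1^*)=\theta_1^{\rT}\theta_1^*-F(\theta_1)$ whenever $\theta_1^*=\nabla F(\theta_1)$. Substituting this into $F(\theta_0)+F^*(\theta_1^*)-\theta_0^{\rT}\theta_1^*$ gives $F(\theta_0)-F(\theta_1)-(\theta_0-\theta_1)^{\rT}\nabla F(\theta_1)=B_F(\theta_0,\theta_1)$, which is \eqref{eq:bregman-legendre}. Applying the same identity with the roles of $F$ and $F^*$ exchanged, and using $\nabla F^*(\theta^*)=\theta$, yields the classical duality $B_F(\theta_1,\theta_2)=B_{F^*}(\theta_2^*,\theta_1^*)$. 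This duality is the backbone for (a).

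For (a) I would work with $d=1$ and split each weighted divergence into its unweighted Bregman part and a correction coming from the tilt. By \eqref{eq:Fhat-def}, $\hat F'(\theta)=F'(\theta)+(\ln E_\varphi)'(\theta)$. Hence
\[
B^{\rw}_{\varphi,F}(\theta_1,\theta_2)=E_\varphi(\theta_2)\Bigl[B_F(\theta_1,\theta_2)-(\theta_1-\theta_2)(\ln E_\varphi)'(\theta_2)\Bigr].
\]
For the dual side I would fix the convention explicitly: $\hat{F^*}(\theta^*):=F^*(\theta^*)+\ln E_\varphi(\nabla F^*(\theta^*))$, with prefactor $E_\varphi(\theta^*):=E_\varphi(\theta)$ as in the statement. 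Its derivative then follows from the chain rule, $(\hat{F^*})'(\theta^*)=\theta+(\ln E_\varphi)'(\theta)/F''(\theta)$, using $(F^*)''=1/F''$. This gives an analogous decomposition of $B^{\rw}_{\varphi,F^*}(\theta_2^*,\theta_1^*)$ into $E_\varphi(\theta_1)\,B_{F^*}(\theta_2^*,\theta_1^*)$ plus a correction. Subtracting the two decompositions and invoking $B_F(\theta_1,\theta_2)=B_{F^*}(\theta_2^*,\theta_1^*)$ from (b) should leave only terms linear in $\theta_1-\theta_2$ and in $\theta_2^*-\theta_1^*$, multiplied by tilt quantities. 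These are then to be matched with $-(\theta_1-\theta_2)\ln E_\varphi(\theta_2)+(\theta_2^*-\theta_1^*)\ln E_\varphi(\theta_1)$.

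The main obstacle is this final matching. The two sides carry different prefactors, $E_\varphi(\theta_2)$ versus $E_\varphi(\theta_1)$. The corrections also come out naturally in terms of derivatives $(\ln E_\varphi)'$, while \eqref{eq:weighted-bregman-duality} is written with the logarithms $\ln E_\varphi$ themselves. So the identity holds only for a suitable reading of ``defined analogously'', and I expect that pinning down the convention is where the work lies. My first attempt will be to read the correction in the definition of the dual weighted divergence so that the tilt enters additively through $\ln E_\varphi$ evaluated at the base point, i.e.\ as a gradient shift of $\hat F$ by $\ln E_\varphi$ rather than by its derivative, and with the prefactor absorbed. I will then check term by term that the leftover pieces are exactly the two stated linear corrections. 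If this fails, I would restate (a) with the correction terms written via $(\ln E_\varphi)'$ and the prefactors $E_\varphi(\theta_1)$ and $E_\varphi(\theta_2)$ displayed explicitly, which the computation above delivers directly. As a sanity check, every correction term must vanish when $\varphi\equiv1$, and then (a) must reduce to the classical duality from (b).
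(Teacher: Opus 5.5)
Your part (b) and your derivation of the classical duality $B_F(\theta_1,\theta_2)=B_{F^*}(\theta_2^*,\theta_1^*)$ are correct, and so is your decomposition for (a). The obstacle you flag in the final matching cannot be resolved by a choice of convention. With $B^{\rw}_{\varphi,F}$ fixed by Definition~\ref{def:wbreg}, identity \eqref{eq:weighted-bregman-duality} is false, so no proof can close this step.

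\emph{Counterexample.} Take a constant weight $\varphi\equiv c>0$.
\begin{itemize}
\item Then $E_\varphi\equiv c$, $\hat F=F+\ln c$ and $\nabla\hat F=\nabla F$, so $B^{\rw}_{\varphi,F}(\theta_1,\theta_2)=c\,B_F(\theta_1,\theta_2)$.
\item On the dual side, the stated convention forces the prefactor $e^{\widehat{F^*}-F^*}$ to equal $E_\varphi(\theta_1)=c$. The tilt of $F^*$ is therefore again a constant shift, and $B^{\rw}_{\varphi,F^*}(\theta_2^*,\theta_1^*)=c\,B_F(\theta_1,\theta_2)$.
\item Hence \eqref{eq:weighted-bregman-duality} reduces to $0=-\ln c\,\bigl[(\theta_1-\theta_2)+(F'(\theta_1)-F'(\theta_2))\bigr]$.
\item This fails whenever $c\neq 1$ and $\theta_1\neq\theta_2$: $F'$ is strictly increasing, so the two differences have the same sign.
\item Concretely, take $F(\theta)=\theta^2/2$, $\theta_1=1$, $\theta_2=0$, $c=e$. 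The left side is $e/2$ and the right side is $e/2-2$.
\end{itemize}
The structural reason is a mismatch in scaling. Definition~\ref{def:wbreg} is homogeneous of degree one under $\varphi\mapsto c\varphi$. The correction terms in \eqref{eq:weighted-bregman-duality} instead shift additively by $-\ln c\,[(\theta_1-\theta_2)+(\theta_1^*-\theta_2^*)]$.

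\emph{Your first attempt.} It does make the algebra close, but only after redefining both sides. Set
\begin{align*}
\tilde B_F(\theta_1,\theta_2)&:=F(\theta_1)-F(\theta_2)-(\theta_1-\theta_2)\bigl[F'(\theta_2)+\ln E_\varphi(\theta_2)\bigr],\\
\tilde B_{F^*}(\theta_2^*,\theta_1^*)&:=F^*(\theta_2^*)-F^*(\theta_1^*)-(\theta_2^*-\theta_1^*)\bigl[\theta_1+\ln E_\varphi(\theta_1)\bigr].
\end{align*}
Then the displayed identity for $\tilde B$ follows in one line from the classical duality. But the statement ties the left-hand side to Definition~\ref{def:wbreg}, so this proves a different proposition.

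\emph{Relation to the paper.} The paper gives no computation for (a). It only remarks that (a) follows by combining \eqref{eq:weighted-bregman} with Legendre relations and refers to \cite{KS1}. Your computation is exactly that combination, and it shows the stated form does not come out.

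\emph{What to do instead.} Commit to your fallback. With your convention $\widehat{F^*}(\theta^*)=F^*(\theta^*)+\ln E_\varphi(\nabla F^*(\theta^*))$, the identity that actually holds is
\begin{align*}
\frac{B^{\rw}_{\varphi,F}(\theta_1,\theta_2)}{E_\varphi(\theta_2)}+(\theta_1-\theta_2)\,(\ln E_\varphi)'(\theta_2)
&=B_F(\theta_1,\theta_2)\\
&=\frac{B^{\rw}_{\varphi,F^*}(\theta_2^*,\theta_1^*)}{E_\varphi(\theta_1)}+(\theta_2^*-\theta_1^*)\,\frac{(\ln E_\varphi)'(\theta_1)}{F''(\theta_1)}.
\end{align*}
It reduces to the classical duality when $\varphi\equiv 1$. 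Present (a) in this corrected form, together with the constant-weight counterexample, rather than searching for a reading under which the original display holds.
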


\begin{proof}
Part (a) is a weighted extension of the classical duality $B_F(\theta_1,\theta_2)=B_{F^*}(\theta_2^*,\theta_1^*)$
and follows by combining the weighted representation \eqref{eq:weighted-bregman} with Legendre relations;
see also \cite{KS1}. Part (b) is standard.
\end{proof}

\subsubsection{Weighted Chernoff/Bhattacharyya Quantities Inside an Exponential Family}
Let $p_{\theta_1},p_{\theta_2}\in{\cal E}$ and $\theta_\alpha:=\alpha\theta_1+(1-\alpha)\theta_2$.
A direct calculation yields
\begin{align}\label{eq:rho-expfam}
\rho^{\rw}_{\alpha}(p_{\theta_1},p_{\theta_2})
& =\ln\int_{\cal X}\varphi(x)\,p_{\theta_1}(x)^\alpha p_{\theta_2}(x)^{1-\alpha}\,\rd\mu(x)
\\
& =\hat F(\theta_\alpha)-\alpha F(\theta_1)-(1-\alpha)F(\theta_2). 
\nonumber
\end{align}
Consequently,
\begin{align}\label{eq:Db-expfam}
D^{\rw}_{B,\alpha}(p_{\theta_1},p_{\theta_2})
& =\alpha F(\theta_1)+(1-\alpha)F(\theta_2)-\hat F(\theta_\alpha) 
\\
& =U_{F,\alpha}(\theta_1,\theta_2)-\ln E_\varphi(\theta_\alpha),
\nonumber
\end{align}
where $U_{F,\alpha}(\theta_1,\theta_2)
:=\alpha F(\theta_1)+(1-\alpha)F(\theta_2)-F(\theta_\alpha)$
is the (unweighted) Jensen/Burbea--Rao divergence induced by $F$.
In particular, when $\varphi\equiv 1$ we have $\hat F\equiv F$ and
$D^{\rw}_{B,\alpha}(p_{\theta_1},p_{\theta_2})=U_{F,\alpha}(\theta_1,\theta_2)$.

\begin{remark}[Geometric mixtures and tilting by $\varphi$]\label{rem:geom-mixture}
In particular, when $\varphi\equiv 1$, we have $\hat F\equiv F$ and the normalised geometric mixture
$p_{\theta_1}^\alpha p_{\theta_2}^{1-\alpha}$ belongs to the same exponential family, namely,
$\propto p_{\theta_\alpha}$ with $\theta_\alpha=\alpha\theta_1+(1-\alpha)\theta_2$.
\end{remark}

\begin{Proposition}[Optimal Chernoff parameter in an exponential family]\label{prop:alpha-star-expfam}
Assume that $\hat F$ is strictly convex on the segment $[\theta_1,\theta_2]$ and that the maximiser
$\alpha^*\in(0,1)$ exists. Then, $\alpha^*$ is unique and satisfies
\begin{equation}\label{eq:alpha-star-opt}
(\theta_1-\theta_2)^{\rT}\nabla\hat F(\theta_{\alpha^*})=F(\theta_1)-F(\theta_2),
\qquad
\theta_{\alpha^*}=\alpha^*\theta_1+(1-\alpha^*)\theta_2,
\end{equation}
with $\nabla\hat F(\theta)=\bE_{p_\theta^*}[t(X)]$.
\end{Proposition}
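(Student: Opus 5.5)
The plan is to reduce the problem to a one-variable convex optimisation along the segment $\theta_\alpha=\alpha\theta_1+(1-\alpha)\theta_2$, using the representation \eqref{eq:Db-expfam}. Define
\[
g(\alpha):=D^{\rw}_{B,\alpha}(p_{\theta_1},p_{\theta_2})=\alpha F(\theta_1)+(1-\alpha)F(\theta_2)-\hat F(\theta_\alpha),\qquad \alpha\in[0,1].
\]
The maximiser $\alpha^*$ in Definition~\ref{def:weighted-chernoff} is then a maximiser of $g$ on $[0,1]$. The first two terms are affine in $\alpha$. The map $\alpha\mapsto\hat F(\theta_\alpha)$ is the composition of $\hat F$ with an affine parametrisation of the segment, so strict convexity of $\hat F$ on $[\theta_1,\theta_2]$ makes $\alpha\mapsto \hat F(\theta_\alpha)$ strictly convex whenever $\theta_1\neq\theta_2$. (If $\theta_1=\theta_2$ the statement is degenerate and I would set it aside.) Consequently $g$ is strictly concave on $[0,1]$.

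Uniqueness then follows from the standard argument. Suppose $\alpha_1\ne\alpha_2$ were both maximisers with common value $M$. Strict concavity would give $g\bigl(\tfrac{\alpha_1+\alpha_2}2\bigr)>M$, which is a contradiction.

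For the first-order condition, I would differentiate $\hat F$ under the integral sign, exactly as in the proof of Proposition~\ref{prop:kl-bregman}, where regularity of ${\cal E}$ is invoked. This gives
\[
\nabla\hat F(\theta)=\frac{\int\varphi\, t\, e^{\theta^{\rT}t+k}\,\rd\mu}{\int\varphi\, e^{\theta^{\rT}t+k}\,\rd\mu}=\bE_{p^*_\theta}[t(X)],
\]
with $p^*_\theta$ as in \eqref{eq:adjoint-density}. By the chain rule,
\[
g'(\alpha)=F(\theta_1)-F(\theta_2)-(\theta_1-\theta_2)^{\rT}\nabla\hat F(\theta_\alpha).
\]
Since $\alpha^*$ is an interior maximiser of a differentiable function, we have $g'(\alpha^*)=0$, and this is precisely \eqref{eq:alpha-star-opt}. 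For completeness I would also note the converse: $g'$ is strictly decreasing, so \eqref{eq:alpha-star-opt} has at most one root in $(0,1)$, which re-proves uniqueness from the equation itself.

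The only delicate step is justifying differentiability of $\hat F$ at $\theta_{\alpha^*}$ and the interchange of derivative and integral. I would handle this by assuming that $\theta_{\alpha^*}$ lies in the interior of the natural parameter domain of the tilted family $\varphi e^{\theta^{\rT}t+k}$. On that interior the standard dominated-convergence argument for exponential families applies, since $\hat F$ is itself the log-partition function of the measure $\varphi e^{k}\,\rd\mu$. This is where the paper's regularity assumption is used.
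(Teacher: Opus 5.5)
Your proposal is correct and follows essentially the same route as the paper: differentiate the representation \eqref{eq:Db-expfam} in $\alpha$ using $\frac{\rd}{\rd\alpha}\theta_\alpha=\theta_1-\theta_2$, and set the derivative to zero at the interior maximiser. Uniqueness then comes from the strict concavity of $\alpha\mapsto D^{\rw}_{B,\alpha}(p_{\theta_1},p_{\theta_2})$, inherited from strict convexity of $\hat F$ on the segment. Your additions only make explicit what the paper's two-line proof leaves implicit: the midpoint argument, the degenerate case $\theta_1=\theta_2$ (where uniqueness genuinely fails), and the requirement that $\theta_{\alpha^*}$ be interior to the tilted natural parameter domain so that $\nabla\hat F(\theta)=\bE_{p^*_\theta}[t(X)]$ is justified.
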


\begin{proof}
Differentiate \eqref{eq:Db-expfam} with respect to $\alpha$ and use
$\frac{\rd}{\rd\alpha}\theta_\alpha=\theta_1-\theta_2$.
Strict convexity of $\hat F$ on $[\theta_1,\theta_2]$ implies strict concavity of
$\alpha\mapsto D^{\rw}_{B,\alpha}(p_{\theta_1},p_{\theta_2})$, hence uniqueness.
\end{proof}

\begin{Proposition}[Chernoff information as a Jensen-type divergence and a Bregman bisector]\label{prop:chernoff-jensen-bisector}
Let $p_{\theta_1},p_{\theta_2}\in{\cal E}$ and assume that the maximiser $\alpha^*\in(0,1)$ in
Definition~\ref{def:weighted-chernoff} exists and is unique. Set $\theta_\alpha=\alpha\theta_1+(1-\alpha)\theta_2$. Then
\begin{equation}\label{eq:chernoff-jensen-value}
D_C^{\rw}(p_{\theta_1},p_{\theta_2})
=D^{\rw}_{B,\alpha^*}(p_{\theta_1},p_{\theta_2})
=\alpha^*F(\theta_1)+(1-\alpha^*)F(\theta_2)-\hat F(\theta_{\alpha^*}).
\end{equation}
Moreover, $\theta_{\alpha^*}$ is characterised by the weighted Bregman bisector condition
\begin{equation}\label{eq:chernoff-bisector}
B^{\rw}_{\varphi,F}(\theta_1,\theta_{\alpha^*})=B^{\rw}_{\varphi,F}(\theta_2,\theta_{\alpha^*}),
\end{equation}
and the common value recovers the Chernoff information as
\begin{align}\label{eq:chernoff-bregman-point}
D_C^{\rw}(p_{\theta_1},p_{\theta_2})
&=\frac{1}{E_\varphi(\theta_{\alpha^*})}B^{\rw}_{\varphi,F}(\theta_1,\theta_{\alpha^*})-\ln E_\varphi(\theta_{\alpha^*})
\\
&=\frac{1}{E_\varphi(\theta_{\alpha^*})}B^{\rw}_{\varphi,F}(\theta_2,\theta_{\alpha^*})-\ln E_\varphi(\theta_{\alpha^*}).
\end{align}
In the special case $\varphi\equiv 1$, we have $\hat F\equiv F$ and $E_\varphi(\theta)\equiv 1$, so that
\eqref{eq:chernoff-jensen-value} reduces to the classical Jensen divergence induced by $F$ and
\eqref{eq:chernoff-bregman-point} becomes $D_C(p_{\theta_1},p_{\theta_2})=B_F(\theta_1,\theta_{\alpha^*})=B_F(\theta_2,\theta_{\alpha^*})$.
\end{Proposition}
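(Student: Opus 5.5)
The plan is to reduce everything to the closed-form Bhattacharyya expression \eqref{eq:Db-expfam} and the first-order condition \eqref{eq:alpha-star-opt}, and then to obtain the Bregman statements by purely algebraic manipulation of Definition~\ref{def:wbreg}.

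\emph{Step 1 (Jensen value).} By Definition~\ref{def:weighted-chernoff}, $D_C^{\rw}(p_{\theta_1},p_{\theta_2})=D^{\rw}_{B,\alpha^*}(p_{\theta_1},p_{\theta_2})$. Substituting $\alpha=\alpha^*$ in \eqref{eq:Db-expfam} gives \eqref{eq:chernoff-jensen-value} directly. It also gives the alternative form $D_C^{\rw}=U_{F,\alpha^*}(\theta_1,\theta_2)-\ln E_\varphi(\theta_{\alpha^*})$, which is the form I will use in Step 3.

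\emph{Step 2 (bisector).} Since $\alpha^*\in(0,1)$ is an interior maximiser of the differentiable concave map $\alpha\mapsto D^{\rw}_{B,\alpha}$, the stationarity condition \eqref{eq:alpha-star-opt} holds:
\[
(\theta_1-\theta_2)^{\rT}\nabla\hat F(\theta_{\alpha^*})=F(\theta_1)-F(\theta_2).
\]
Differentiability of $\hat F$ at $\theta_{\alpha^*}$ is inherited from regularity of the family, exactly as in Proposition~\ref{prop:kl-bregman}. Now write out both sides of \eqref{eq:chernoff-bisector} using Definition~\ref{def:wbreg}. The factor $E_\varphi(\theta_{\alpha^*})$ and the term $-F(\theta_{\alpha^*})$ are common to both sides. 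Since $\theta_1-\theta_{\alpha^*}=(1-\alpha^*)(\theta_1-\theta_2)$ and $\theta_2-\theta_{\alpha^*}=-\alpha^*(\theta_1-\theta_2)$, the difference of the two sides equals
\[
E_\varphi(\theta_{\alpha^*})\bigl[F(\theta_1)-F(\theta_2)-(\theta_1-\theta_2)^{\rT}\nabla\hat F(\theta_{\alpha^*})\bigr],
\]
which vanishes by \eqref{eq:alpha-star-opt}.

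For the word ``characterised'' I will also prove the converse. If \eqref{eq:chernoff-bisector} holds at some $\alpha\in(0,1)$, the same computation shows that $\alpha$ satisfies the first-order condition. Concavity of $\alpha\mapsto D^{\rw}_{B,\alpha}$ (convexity of $\hat F$) then makes $\alpha$ a maximiser, and the assumed uniqueness gives $\alpha=\alpha^*$.

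\emph{Step 3 (common value).} Let $b_j:=B^{\rw}_{\varphi,F}(\theta_j,\theta_{\alpha^*})/E_\varphi(\theta_{\alpha^*})$ for $j=1,2$. By Step 2, $b_1=b_2$, so each equals the convex combination $\alpha^*b_1+(1-\alpha^*)b_2$. In this combination the gradient terms carry coefficients $\alpha^*(1-\alpha^*)$ and $-(1-\alpha^*)\alpha^*$, so they cancel. What remains is $\alpha^*F(\theta_1)+(1-\alpha^*)F(\theta_2)-F(\theta_{\alpha^*})=U_{F,\alpha^*}(\theta_1,\theta_2)$. Subtracting $\ln E_\varphi(\theta_{\alpha^*})$ and invoking Step 1 yields \eqref{eq:chernoff-bregman-point}. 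The case $\varphi\equiv1$ follows by setting $\hat F=F$ and $E_\varphi\equiv1$.

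\emph{Main obstacle.} The algebra is routine. The delicate point is justifying the first-order condition, namely differentiating $\hat F$ under the integral along the segment so that \eqref{eq:alpha-star-opt} holds at the interior maximiser. I would take this from Proposition~\ref{prop:alpha-star-expfam} rather than reprove it, assuming $\theta_{\alpha^*}$ lies in the interior of the natural parameter domain of the tilted family.
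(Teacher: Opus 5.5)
Your proof is correct and follows the paper's argument: evaluate \eqref{eq:Db-expfam} at $\alpha^*$, then expand both weighted Bregman terms and cancel their difference using the first-order condition \eqref{eq:alpha-star-opt}. The differences are minor. You get the common value from the convex-combination identity $\alpha^*b_1+(1-\alpha^*)b_2=U_{F,\alpha^*}(\theta_1,\theta_2)$, where the paper substitutes \eqref{eq:alpha-star-opt} directly into $b_1$. You also prove the converse direction of ``characterised'', which the paper leaves implicit.
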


\begin{proof}
Equation \eqref{eq:chernoff-jensen-value} is \eqref{eq:Db-expfam} at $\alpha=\alpha^*$.
For \eqref{eq:chernoff-bisector}, expand
$B^{\rw}_{\varphi,F}(\theta_i,\theta_{\alpha^*})=E_\varphi(\theta_{\alpha^*})\big[F(\theta_i)-F(\theta_{\alpha^*})-(\theta_i-\theta_{\alpha^*})^{\rT}\nabla\hat F(\theta_{\alpha^*})\big]$
and use \eqref{eq:alpha-star-opt} to see that the difference vanishes.
Finally, substituting 
$\theta_1-\theta_{\alpha^*}=(1-\alpha^*)(\theta_1-\theta_2)$
into
$B^{\rw}_{\varphi,F}(\theta_1,\theta_{\alpha^*})/E_\varphi(\theta_{\alpha^*})$ and using \eqref{eq:alpha-star-opt} gives
\[
\frac{1}{E_\varphi(\theta_{\alpha^*})}B^{\rw}_{\varphi,F}(\theta_1,\theta_{\alpha^*})
=\alpha^*F(\theta_1)+(1-\alpha^*)F(\theta_2)-{F}(\theta_{\alpha^*}).
\]
Since $\hat F(\theta_{\alpha^*})=F(\theta_{\alpha^*})+\ln E_\varphi(\theta_{\alpha^*})$, this yields \eqref{eq:chernoff-bregman-point}.
\end{proof}

\subsubsection{Derivative and Weighted KL}
Recall $F_{pq}(\alpha)=\ln Z_{pq}(\alpha)=\ln\rho^{\rw}_\alpha(p,q)$.
In view of (\ref{finitemean} ), the  differentiation under the integral sign is justified, and we have
\begin{equation}\label{eq:Fpq-derivative}
F_{pq}'(\alpha)
=\bE_{(pq)_\alpha}\!\left[\ln\frac{p(X)}{q(X)}\right],
\end{equation}
where $(pq)_\alpha$ is the Chernoff-tilted density from \eqref{eq:Epq-def}.
In particular,
\begin{equation}\label{eq:KL-derivative}
F_{pq}'(1)
=\frac{1}{E_\varphi(p)}\int_{\cal X}\varphi(x)\,p(x)\,\ln\frac{p(x)}{q(x)}\,\rd\mu(x)
=\frac{1}{E_\varphi(p)}D^{\rw}_{\rm KL}(p\|q).
\end{equation}
Analogously, $F'_{pq}(0)= -\frac{1}{E_\varphi(q)}D^{\rw}_{\rm KL}(q\|p)$.


\subsubsection{Chernoff--KL.}
\begin{Lemma}\label{lem:chernoff-kl}
Let $\alpha^*$ be a maximiser in Definition~\ref{def:weighted-chernoff} and assume that
$\alpha^*\in(0,1)$ (so that $F_{pq}'(\alpha^*)=0$ below). Set
\(
F_{pq}(\alpha):=\ln\rho^{\rw}_\alpha(p,q), r_\alpha:=(pq)_\alpha.
\)
Then
\begin{equation}\label{eq:chernoff-kl}
D^{\rw}_C(p,q)
= D_{\rm KL}\!\big(r_{\alpha^*}\,\|\,r_1\big)-\ln E_{\varphi}(p)
= D_{\rm KL}\!\big(r_{\alpha^*}\,\|\,r_0\big)-\ln E_{\varphi}(q),
\end{equation}
where $r_1(x)=\varphi(x)p(x)/E_\varphi(p)$ and $r_0(x)=\varphi(x)q(x)/E_\varphi(q)$.
\end{Lemma}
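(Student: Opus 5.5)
Since $r_\alpha=(pq)_\alpha$ lies in the exponential family \eqref{eq:Epq-expform}, I will compute $D_{\rm KL}(r_{\alpha^*}\|r_1)$ directly and substitute the stationarity condition at $\alpha^*$. The key log-ratio is
\[
\ln\frac{r_{\alpha^*}(x)}{r_1(x)}
=(\alpha^*-1)\,t(x)-F_{pq}(\alpha^*)+F_{pq}(1),
\qquad t=\ln\frac pq .
\]
It follows from \eqref{eq:Epq-expform}: both densities have the same carrier term $k_{pq}=\ln\varphi+\ln q$, which cancels. It also holds $\mu$-a.e.\ on $\{\varphi>0\}$, where $r_{\alpha^*}$ is supported, and $p,q>0$ a.e. This is the standard exponential-family identity $D_{\rm KL}(r_a\|r_b)=F(b)-F(a)-(b-a)F'(a)$, i.e.\ a Bregman divergence of the log-normaliser.

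Next I take the expectation under $r_{\alpha^*}$. By \eqref{eq:Fpq-derivative}, $\bE_{r_{\alpha^*}}[t(X)]=F'_{pq}(\alpha^*)$. The integrability needed here is exactly condition \eqref{finitemean} at $\alpha=\alpha^*$, since $\int|t|\,r_{\alpha^*}\,\rd\mu$ is that integral divided by $Z_{pq}(\alpha^*)\in(0,\infty)$. This gives
\[
D_{\rm KL}(r_{\alpha^*}\|r_1)=(\alpha^*-1)F'_{pq}(\alpha^*)-F_{pq}(\alpha^*)+F_{pq}(1).
\]
Because $\alpha^*\in(0,1)$ is an interior maximiser of the concave function $-F_{pq}$ (convexity of $F_{pq}$ was noted in Section~\ref{subsec:exp-family}), we have $F'_{pq}(\alpha^*)=0$. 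Also $-F_{pq}(\alpha^*)=D^{\rw}_{B,\alpha^*}=D_C^{\rw}(p,q)$ and $F_{pq}(1)=\ln Z_{pq}(1)=\ln E_\varphi(p)$. Hence $D_{\rm KL}(r_{\alpha^*}\|r_1)=D_C^{\rw}(p,q)+\ln E_\varphi(p)$, which is the first equality in \eqref{eq:chernoff-kl}.

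The second equality follows the same way with $r_0$. Now the log-ratio is $\alpha^*t(x)-F_{pq}(\alpha^*)+F_{pq}(0)$ and $F_{pq}(0)=\ln E_\varphi(q)$; the term $\alpha^*F'_{pq}(\alpha^*)$ again vanishes.

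The only delicate point is justifying $F'_{pq}(\alpha^*)=0$ together with the finiteness of $\bE_{r_{\alpha^*}}|t|$. Differentiability of $F_{pq}$ at the interior point $\alpha^*$ comes from \eqref{finitemean} through dominated convergence, as already used for \eqref{eq:Fpq-derivative}; I will cite that rather than redo it. Without the interiority assumption the derivative term would not vanish, which is why the lemma restricts to $\alpha^*\in(0,1)$.
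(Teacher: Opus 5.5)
Your proposal is correct and takes essentially the same route as the paper. Like the paper, you write $D_{\rm KL}(r_{\alpha}\|r_1)=(\alpha-1)F'_{pq}(\alpha)-F_{pq}(\alpha)+F_{pq}(1)$ and $D_{\rm KL}(r_{\alpha}\|r_0)=\alpha F'_{pq}(\alpha)-F_{pq}(\alpha)+F_{pq}(0)$ inside the Chernoff exponential family, then eliminate the derivative term via $F'_{pq}(\alpha^*)=0$ at the interior maximiser. Your remarks on integrability through \eqref{finitemean} and on the support of $r_{\alpha^*}$ simply spell out what the paper's ``direct computation'' leaves implicit.
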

Here, $D_{\rm KL}$ denotes the standard (unweighted) Kullback–Leibler divergence.
\begin{proof}
A direct computation yields, for $\alpha\in[0,1]$,
\begin{align*}
D_{\rm KL}(r_\alpha\|r_1)&=-(1-\alpha)F'_{pq}(\alpha)-F_{pq}(\alpha)+F_{pq}(1),
\\
D_{\rm KL}(r_\alpha\|r_0)&=\alpha F'_{pq}(\alpha)-F_{pq}(\alpha)+F_{pq}(0).
\end{align*}
At $\alpha=\alpha^*$, we have $F'_{pq}(\alpha^*)=0$. Since $F_{pq}(1)=\ln E_\varphi(p)$ and
$F_{pq}(0)=\ln E_\varphi(q)$, the claim follows from $D_C^{\rw}(p,q)=-F_{pq}(\alpha^*)$.
\end{proof}

\begin{Corollary}[Chernoff information as a Bregman divergence on the Chernoff arc]\label{cor:chernoff-bregman-arc}
Let $F_{pq}(\alpha):=\ln\rho^{\rw}_{\alpha}(p,q)$ and define the one-dimensional Bregman divergence
\[
B_{F_{pq}}(a,b):=F_{pq}(a)-F_{pq}(b)-(a-b)F'_{pq}(b).
\]
Assume that the maximiser $\alpha^*\in(0,1)$ in Definition~\ref{def:weighted-chernoff} is interior,
so that $F'_{pq}(\alpha^*)=0$. Then
\begin{equation}\label{eq:chernoff-bregman-arc}
D_C^{\rw}(p,q)
=B_{F_{pq}}(1,\alpha^*)-\ln E_\varphi(p)
=B_{F_{pq}}(0,\alpha^*)-\ln E_\varphi(q).
\end{equation}
Equivalently,
\begin{align*}
B_{F_{pq}}(1,\alpha^*)&=D_{\rm KL}(r_{\alpha^*}\|r_1)=D_C^{\rw}(p,q)+\ln E_\varphi(p),
\\
B_{F_{pq}}(0,\alpha^*)&=D_{\rm KL}(r_{\alpha^*}\|r_0)=D_C^{\rw}(p,q)+\ln E_\varphi(q),
\end{align*}
with $r_\alpha=(pq)_\alpha$, $r_1=\varphi p/E_\varphi(p)$ and $r_0=\varphi q/E_\varphi(q)$.
\end{Corollary}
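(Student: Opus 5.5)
The plan is to reduce Corollary~\ref{cor:chernoff-bregman-arc} to Lemma~\ref{lem:chernoff-kl}. Since $\alpha^*$ is interior and $F_{pq}$ is differentiable under \eqref{finitemean} (see \eqref{eq:Fpq-derivative}), we have $F'_{pq}(\alpha^*)=0$. The one-dimensional Bregman divergences therefore collapse to
\[
B_{F_{pq}}(1,\alpha^*)=F_{pq}(1)-F_{pq}(\alpha^*),\qquad B_{F_{pq}}(0,\alpha^*)=F_{pq}(0)-F_{pq}(\alpha^*).
\]
Next, recall from Subsection~\ref{subsec:exp-family} that $F_{pq}(1)=\ln Z_{pq}(1)=\ln E_\varphi(p)$ and $F_{pq}(0)=\ln E_\varphi(q)$. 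By Definition~\ref{def:weighted-chernoff}, $-F_{pq}(\alpha^*)=D^{\rw}_C(p,q)$. Substituting gives $B_{F_{pq}}(1,\alpha^*)=D_C^{\rw}(p,q)+\ln E_\varphi(p)$ and $B_{F_{pq}}(0,\alpha^*)=D_C^{\rw}(p,q)+\ln E_\varphi(q)$. Rearranging yields \eqref{eq:chernoff-bregman-arc}.

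For the identification with KL divergences, I will use the general-$\alpha$ formulas from the proof of Lemma~\ref{lem:chernoff-kl}:
\[
D_{\rm KL}(r_\alpha\|r_1)=F_{pq}(1)-F_{pq}(\alpha)-(1-\alpha)F'_{pq}(\alpha),
\]
which is exactly $B_{F_{pq}}(1,\alpha)$, and similarly $D_{\rm KL}(r_\alpha\|r_0)=B_{F_{pq}}(0,\alpha)$. This is the standard fact that KL divergence within an exponential family equals the Bregman divergence of the log-normaliser with swapped arguments, applied here to the family ${\cal E}_{pq}$ of \eqref{eq:Epq-expform}. To re-verify it, I will write $\ln(r_\alpha/r_1)=(\alpha-1)t-F_{pq}(\alpha)+F_{pq}(1)$ and integrate against $r_\alpha$, using $\bE_{r_\alpha}[t]=F'_{pq}(\alpha)$. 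Evaluating at $\alpha=\alpha^*$ then gives the displayed equivalences and agrees with \eqref{eq:chernoff-kl}.

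The only delicate point is justifying $\bE_{r_\alpha}[t]=F'_{pq}(\alpha)$ and the finiteness of the KL integrals. Both follow from \eqref{finitemean}, which makes $t$ integrable under every $r_\alpha$ and legitimises differentiation under the integral sign, as in \eqref{eq:Fpq-derivative}. Beyond that, the argument is pure bookkeeping.
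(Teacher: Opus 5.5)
Your proposal is correct and follows essentially the same route as the paper: you collapse $B_{F_{pq}}(1,\alpha^*)$ and $B_{F_{pq}}(0,\alpha^*)$ using $F'_{pq}(\alpha^*)=0$, $F_{pq}(1)=\ln E_\varphi(p)$, $F_{pq}(0)=\ln E_\varphi(q)$ and $-F_{pq}(\alpha^*)=D_C^{\rw}(p,q)$, and you identify these quantities with $D_{\rm KL}(r_{\alpha^*}\|r_1)$ and $D_{\rm KL}(r_{\alpha^*}\|r_0)$ via the KL--Bregman identity in the one-parameter family ${\cal E}_{pq}$, which is exactly the general-$\alpha$ computation in the proof of Lemma~\ref{lem:chernoff-kl}. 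Your remark that \eqref{finitemean} gives $t\in L^1(r_\alpha)$ and justifies $\bE_{r_\alpha}[t]=F'_{pq}(\alpha)$ makes explicit an integrability detail that the paper leaves implicit.
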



\begin{proof}
In the Chernoff exponential family $\{r_\alpha\}$ with log-normalizer $F_{pq}$, the KL--Bregman identity gives
$D_{\rm KL}(r_{\alpha^*}\|r_1)=B_{F_{pq}}(1,\alpha^*)$ and $D_{\rm KL}(r_{\alpha^*}\|r_0)=B_{F_{pq}}(0,\alpha^*)$.
Since $F'_{pq}(\alpha^*)=0$ and $F_{pq}(1)=\ln E_\varphi(p)$, $F_{pq}(0)=\ln E_\varphi(q)$, we obtain
\[
B_{F_{pq}}(1,\alpha^*)=-F_{pq}(\alpha^*)+F_{pq}(1)=D_C^{\rw}(p,q)+\ln E_\varphi(p),
\]
and similarly for $0$. Substituting this into Lemma~\ref{lem:chernoff-kl} yields \eqref{eq:chernoff-bregman-arc}.
\end{proof}

\begin{remark}[One-parameter case]\label{rem:one-parameter-alpha}
Assume $d=1$, $\theta_1\neq\theta_2$, and that the maximiser $\alpha^*\in(0,1)$ in
Definition~\ref{def:weighted-chernoff} is interior.
Assume moreover that $\hat F'$ is strictly increasing on $\Theta$ and set ${\hat G}=(\hat F')^{-1}$.
Then \eqref{eq:alpha-star-opt} yields
\begin{equation}\label{eq:alpha-star-oneparam}
\alpha^*
=\frac{1}{\theta_1-\theta_2}\left({\hat G}\left(\frac{F(\theta_1)-F(\theta_2)}{\theta_1-\theta_2}\right)-\theta_2\right).
\end{equation}
When $\varphi\equiv 1$, we have $\hat F\equiv F$ and \eqref{eq:alpha-star-oneparam}
reduces to the classical formula.
\end{remark}

To illustrate the general identities above, we provide explicit expressions for
$D^{\rw}_{B,\alpha}$ and $D^{\rw}_C$ in several parametric settings (Gaussian, Poisson, and exponential);
see Section~\ref{sec:examples}.

{\begin{remark}
\label{rem:what-identities-yield}
The Bregman representation \eqref{eq:chernoff-bregman-arc} identifies $D_C^{\rw}(\bP,\bQ)$ with a Bregman divergence on the tilted log-normaliser $\hat F$; geometrically, the optimiser $\alpha^\ast$ marks the intersection of the exponential geodesic of the tilted family $\{\varphi\,p^\alpha q^{1-\alpha}\}_{\alpha\in[0,1]}$ with the weighted Kullback--Leibler bisector, generalising the unweighted characterisation of \cite{NielsenSPL2013,N}. In the exponential-family setting, the computation of $D_C^{\rw}$ therefore reduces to $\hat F$ and its gradient: once $\hat F$ is available in closed form, $\alpha^\ast$ is determined by \eqref{eq:alpha-star-oneparam} in the one-parameter case and by a monotone equation in the natural-parameter space in general, without evaluating $\rho_\alpha^{\rw}$ for each $\alpha$. The examples of Section~\ref{sec:examples} exemplify this reduction.
\end{remark}}

\subsection{Tilted Weighted Likelihood and Concentration Bounds}\label{sec:wll}


\added{Although the optimal rule in the context-sensitive problem is still the usual likelihood-ratio test $q/p$ (cf.\ Section~\ref{sec:main-results}), it is convenient to work with the \emph{tilted} ratio $q^*/p^*$. The factor $\varphi$ cancels pointwise in $q^*/p^*$ and enters only through the normalisation constants $E_\varphi(p)$ and $E_\varphi(q)$. We record two consequences: a large-deviation representation for $L^*/n$ via the cumulant generating functions $\psi_{\bP},\psi_{\bQ}$ and their Legendre transforms and a finite-$n$ concentration bound based on a martingale argument.}


For the tilted distributions, the log-likelihood takes the form
\begin{equation}\label{eq:Lstar-def}
\begin{array}{l}
L^*({X}_1^n)=L^*(X_1,\ldots ,X_n)=\sum\limits_{i=1}^n\ln\frac{q^*(X_i)}{p^*(X_i)}\\
\hspace{1.35cm}=\sum\limits_{i=1}^n\ln\frac{q(X_i)}{p(X_i)}-n\ln E_{\varphi}(q)+n\ln E_{\varphi}(p).
\end{array}
\end{equation}
Here $E_{\varphi}(p)=\int \varphi(x)\,p(x)\,\rd\mu(x)$.

In particular, since
\[
\ln\frac{q^*(x)}{p^*(x)}=\ln\frac{q(x)}{p(x)}+\ln E_\varphi(p)-\ln E_\varphi(q),
\]
we may equivalently rewrite likelihood-ratio threshold rules in terms of $L^*$.
For example,
\[
\sum_{i=1}^n\ln\frac{q(X_i)}{p(X_i)}\ge 0
\quad\Longleftrightarrow\quad
L^*(X_1^n)\ge n\big(\ln E_\varphi(p)-\ln E_\varphi(q)\big).
\]
Thus, $L^*$ is the usual log-likelihood ratio, shifted by a constant determined by the context weight $\varphi$.

The log of the moment generating function and its Legendre transform take the form

\begin{align}\label{eq:mgf-legendre}
\psi_{\bP}(\alpha)&=\ln\bE_{\bP}\left[e^{\alpha\ln\frac{q^*(X)}{p^*(X)}}\right]
\nonumber
\\
&=\ln \int_{\cal X} q(x)^{\alpha}p(x)^{1-\alpha}\,\rd\mu(x)-\alpha \ln E_{\varphi}(q)+\alpha \ln E_{\varphi}(p),\\
I_{\bP}(r)&=\sup\limits_{\alpha}\left[\alpha r-\psi_{\bP}(\alpha)\right], \nonumber
\end{align}
where $\alpha$ ranges over the set $\{\alpha\in\mathbb R:\psi_{\bP}(\alpha)<\infty\}$ (and similarly for $\psi_{\bQ}$).

Similarly,
\begin{align}\label{eq:mgf-Q}
\psi_{\bQ}(\alpha):&=\ln\bE_{\bQ}\left[e^{\alpha\ln\frac{q^*(X)}{p^*(X)}}\right]
=\ln\bE_{\bP}\left[e^{\ln\frac{q(X)}{p(X)}+\alpha\ln\frac{q^*(X)}{p^*(X)}}\right]\\
\hspace{1.0cm}&=\ln \int_{\cal X} q(x)^{\alpha+1}p(x)^{-\alpha}\,\rd\mu(x)-\alpha \ln E_{\varphi}(q)+\alpha \ln E_{\varphi}(p).
\nonumber
\end{align}
This implies the relation of Legendre transforms
\begin{equation}\label{eq:legendre-relation}
I_{\bQ}(r)=I_{\bP}(r)-r+\ln E_{\varphi}(p)-\ln E_{\varphi}(q).
\end{equation}
In particular, $I_{\bP}(0)$ may be treated as a natural weighted version of the Chernoff divergence between $q$ and $p$:
\begin{align}\label{eq:IP0}
I_{\bP}(0)&=\sup\limits_{\alpha}\left[-\ln \int_{\cal X} q(x)^{\alpha}p(x)^{1-\alpha}\,\rd\mu(x)+\alpha \ln E_{\varphi}(q)-\alpha \ln E_{\varphi}(p)\right]
\nonumber
\\
&=:{\hat D}^{\rm w}_C(q,p).
\end{align}

\smallskip
\noindent\textit{Interpretation.}
The value $I_{\bP}(0)$ is the Chernoff--Cram\'er exponent controlling the tail event $\{L^*/n\ge 0\}$ under $\bP$,
i.e.,\ (under standard regularity assumptions), $\bP(L^*(X_1^n)\ge 0)\asymp e^{-n I_{\bP}(0)}$.
In the unweighted case $\varphi\equiv 1$, we have $E_\varphi(p)=E_\varphi(q)=1$ and $I_{\bP}(0)$ reduces to the classical
Chernoff information between $p$ and $q$.
We also stress that ${\hat D}_C^{\rm w}(q,p)=I_{\bP}(0)$ is a \emph{tilted-likelihood} exponent and 
is distinct from
 the
weighted Chernoff information $D_C^{\rw}(\bP,\bQ)$ from Definition~\ref{def:weighted-chernoff}, which governs the optimal
sum of context-sensitive losses.

\subsubsection*{Non-Asymptotic Concentration Via a Doob Martingale}
The rate functions $I_{\bP}$ and $I_{\bQ}$ capture the exponential scale of deviations of $L^*/n$ as $n\to\infty$.
To obtain explicit \emph{finite-$n$} bounds, we now apply a standard martingale method to $L^*$ under $\bQ$.

Consider the filtration ${\cal F}_k=\sigma(X_1,\ldots,X_k)$ and define the random variables $\{U_k, k=0,\ldots ,n\}$ by
\begin{align}\label{eq:Uk-def}
U_k & =\bE_{\bQ}\left[L^*(X_1,\ldots ,X_n)\vert {\cal F}_k\right]   \nonumber
\\
&=
\sum\limits_{j=1}^k\ln\frac{q(X_j)}{p(X_j)}+(n-k)D_{\rm KL}(\bQ\|\bP)
-n\ln E_{\varphi}(q)+n\ln E_{\varphi}(p),
\end{align}
where $D_{\rm KL}(\bQ\|\bP)=\int q(x)\ln\frac{q(x)}{p(x)}\,\rd\mu(x)$ stands for the (unweighted) Kullback--Leibler divergence of $\bQ$ and $\bP$. Then
\begin{equation}\label{eq:Uk-increment}
U_k-U_{k-1}=\ln\frac{q(X_k)}{p(X_k)}-D_{\rm KL}(\bQ\|\bP),\qquad k=1,\ldots,n.
\end{equation}
Observe that $\{U_k,{\cal F}_k\}$ is a martingale w.r.t.\ $\bQ$.

\added{Assume now that $d<\infty$ and $\sigma^2<\infty$, where}
$d=\sup\limits_{x\in {\cal X}}\left| \ln\frac{q(x)}{p(x)}-D_{\rm KL}(\bQ\|\bP)\right|$ and
\begin{align}\label{eq:sigma2-def}
\sigma^2 & =\bE_{\bQ}\left[(U_k-U_{k-1})^2\vert {\cal F}_{k-1}\right] \nonumber
\\
& =\int_{\cal X}q(x)\left(\ln\frac{q(x)}{p(x)}-D_{\rm KL}(\bQ\|\bP)\right)^2\,\rd\mu(x).
\end{align}
In view of a refined Azuma--Hoeffding/McDiarmid inequality \cite{A,McD},
\begin{align}\label{eq:azuma-bound}
{\bP}_{\bQ}\bigg( 
L^*(X_1,\ldots ,X_n)  > (D_{\rm KL}(\bQ\|\bP)+ \beta) 
n-(\ln E_{\varphi}(q)-\ln E_{\varphi}(p))n
\bigg)      \nonumber
\\
   \leq
\exp\left\{-n D\left(\frac{\delta+\gamma^*}{1+\gamma^*}\bigg\|\frac{\gamma^*}{1+\gamma^*}\right)\right\},
\end{align}
where $\delta=\frac{\sigma^2}{d^2}$, $\gamma^*=\frac{\beta^*}{d}$, $\beta^*=\beta-\ln E_{\varphi}(q)+\ln E_{\varphi}(p)$ and
$D(p\|q)=p\ln\frac{p}{q}+(1-p)\ln\frac{1-p}{1-q}$
stands for the Kullback--Leibler divergence between the two Bernoulli distributions $(p,1-p)$ and $(q, 1-q)$.

We use the following modified version of Azuma--Hoeffding inequality; see \cite{McD}.

\begin{Lemma}\label{lem:mcdiarmid}
Let $\{U_k,{\cal F}_k\}$ be a discrete-time real-valued martingale. Assume that, for
some constants $d, \sigma > 0$, the following two requirements are satisfied a.s.\
for every $k \in \{1,\ldots, n\}:$
\begin{equation}\label{eq:mcdiarmid-assumptions}
\begin{array}{l}
| U_k-U_{k-1}| \leq d,
\\
{\rm Var}[U_k-U_{k-1}\vert {\cal F}_{k-1}]\leq \sigma^2.
\end{array}
\end{equation}
Then, for every $\beta \geq 0$,
\begin{equation}\label{eq:mcdiarmid-bound}
{\bP}\left(| U_n-U_0| \geq\beta n\right)\leq
2\exp\left\{-n D\left(\frac{\delta+\gamma}{1+\gamma}\bigg\|\frac{\gamma}{1+\gamma}\right)\right\},
\end{equation}
where $\delta=\frac{\sigma^2}{d^2}$ and $\gamma=\frac{\beta}{d}$.
(Note that $\delta\in[0,1]$ automatically whenever $|U_k-U_{k-1}|\le d$ a.s.)
\end{Lemma}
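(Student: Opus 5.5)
The plan is the Chernoff--Bennett route: bound the moment generating function of the martingale through its bounded increments with controlled conditional variance, then optimise the tilt parameter. By symmetry it suffices to bound the upper tail ${\bP}(U_n-U_0\ge\beta n)$ and then apply the same bound to the martingale $-U_k$, which satisfies \eqref{eq:mcdiarmid-assumptions} with the same $d,\sigma$. The union of the two tails gives the factor $2$. Rescaling $U_k\mapsto U_k/d$ reduces everything to increments bounded by $1$, conditional variance at most $\delta=\sigma^2/d^2\in[0,1]$, and deviation $\gamma n$ with $\gamma=\beta/d$.

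First, fix $\lambda\ge0$ and write $\xi_k=(U_k-U_{k-1})/d$. Then $\bE[\xi_k\mid{\cal F}_{k-1}]=0$, $|\xi_k|\le1$ and $\bE[\xi_k^2\mid{\cal F}_{k-1}]\le\delta$. Next, use Bennett's lemma: among mean-zero laws on $(-\infty,1]$ with variance at most $\delta$, the maximum of $\bE e^{\lambda\xi}$ is attained by the two-point law on $\{-\delta,1\}$ with weights $\tfrac{1}{1+\delta},\tfrac{\delta}{1+\delta}$. This gives
\[
\bE\bigl[e^{\lambda\xi_k}\mid{\cal F}_{k-1}\bigr]\le \frac{e^{-\lambda\delta}+\delta e^{\lambda}}{1+\delta}.
\]
Iterating the tower property gives $\bE e^{\lambda(U_n-U_0)/d}\le\bigl(\tfrac{e^{-\lambda\delta}+\delta e^{\lambda}}{1+\delta}\bigr)^n$. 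Markov's inequality then yields ${\bP}(U_n-U_0\ge\beta n)\le\exp\{-n\sup_{\lambda\ge0}[\lambda\gamma-\ln\tfrac{e^{-\lambda\delta}+\delta e^{\lambda}}{1+\delta}]\}$.

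Second, the Legendre transform is computed through a change of variables. Set $Y=(\xi+\delta)/(1+\delta)$, which is Bernoulli under the extremal law. This identifies the supremum with a Bernoulli relative entropy $D(a\|b)$, where $b$ is the extremal success probability and $a$ is the tilted mean matching the threshold $\gamma$. The stationarity equation in $\lambda$ is solved explicitly, and the resulting $D(a\|b)$ is written in the variables $\delta,\gamma$ of the statement. The boundary cases are handled separately: for $\gamma\ge1$ the probability vanishes, and $\delta=0$ is degenerate.

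The main obstacle is this last identification. The extremal two-point computation naturally produces the pair $a=\tfrac{\delta+\gamma}{1+\delta}$ and $b=\tfrac{\delta}{1+\delta}$. I will carefully check whether the displayed pair $\bigl(\tfrac{\delta+\gamma}{1+\gamma},\tfrac{\gamma}{1+\gamma}\bigr)$ in \eqref{eq:mcdiarmid-bound} agrees with it after the optimisation, and whether the displayed exponent is dominated by the Bennett one, which would make the bound follow a fortiori. The sanity check at $\beta=0$ must be done explicitly. There the left side equals $1$, while the displayed exponent $D(\delta\|0)$ is infinite for $\delta>0$. So I expect this step to show that the displayed pair must be read with $\gamma$ and $\delta$ interchanged in the denominators and second argument. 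In that case the proof goes through in the Bennett form and I will record that correction rather than force the literal expression.
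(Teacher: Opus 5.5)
Your proposal is correct. The paper gives no proof of its own (the lemma is quoted from McDiarmid), and your route is exactly the standard argument behind that cited inequality: a conditional Bennett bound for $\bE[e^{\lambda\xi_k}\mid{\cal F}_{k-1}]$ via the extremal two-point law on $\{-\delta,1\}$, iteration by the tower property, Chernoff's bound, and the substitution $Y=(\xi+\delta)/(1+\delta)$, which turns the Legendre transform into a Bernoulli divergence.

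You can state your suspicion about the display as a definite correction. The paper has interchanged the names of the two ratios relative to the source formulation, where $\gamma=\sigma^2/d^2$ and $\delta=\beta/d$. With the paper's definitions, \eqref{eq:mcdiarmid-bound} is false:
\begin{itemize}
\item At $\beta=0$ the right-hand side is $2e^{-D(\delta\|0)}=0$, while the left-hand side is $1$.
\item It also fails for small $\beta>0$, already at $n=1$. Take a single increment with values $-\delta d$ and $d$ and probabilities $\frac{1}{1+\delta}$ and $\frac{\delta}{1+\delta}$, where $0<\delta\le 1$. Then $|U_1-U_0|\ge\delta d$ surely, while the displayed right-hand side tends to $0$ as $\beta\to0^+$.
\end{itemize}
Your computation gives the correct exponent $D\bigl(\frac{\delta+\gamma}{1+\delta}\big\|\frac{\delta}{1+\delta}\bigr)$. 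The same fix is needed in \eqref{eq:azuma-bound} and Theorem~\ref{thm:Lstar-tail}.

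One minor slip: at $\gamma=1$ the tail probability need not vanish; for the extremal law it equals $(\delta/(1+\delta))^n$. The bound still holds there, because the Chernoff supremum, approached as $\lambda\to\infty$, equals $D\bigl(1\big\|\frac{\delta}{1+\delta}\bigr)=\ln\frac{1+\delta}{\delta}$. Only the case $\gamma>1$ is trivial.
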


In particular, the one-sided bound $\bP(U_n-U_0\ge \beta n)\le \exp\{-nD(\cdot)\}$ holds.

\begin{Theorem}\label{thm:Lstar-tail}
Set
\[
\beta^*=\beta-\ln E_{\varphi}(p)+\ln E_{\varphi}(q),
\qquad
\gamma^*=\frac{\beta^*}{d}.
\]
Under conditions $d<\infty$, $\sigma^2<\infty$, and $\beta^*\ge 0$,
\begin{equation}\label{eq:Lstar-tail}
{\bP}_{\bQ}\big(L^*(X_1,\ldots ,X_n)\geq \beta n\big)\leq
\exp\left\{-n D\left(\frac{\delta+\gamma^*}{1+\gamma^*}\bigg\|\frac{\gamma^*}{1+\gamma^*}\right)\right\}.
\end{equation}
\end{Theorem}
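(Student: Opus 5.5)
The plan is to turn the event $\{L^*(X_1^n)\ge\beta n\}$ into a deviation event for the $\bQ$-martingale $\{U_k,\mathcal F_k\}$ of \eqref{eq:Uk-def}, and then apply the one-sided form of Lemma~\ref{lem:mcdiarmid} with $\gamma=\gamma^*=\beta^*/d$. The argument has three steps.

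\emph{Step 1: rewrite the event.} By \eqref{eq:Lstar-def} we have $L^*=\sum_{i}\ln\frac{q(X_i)}{p(X_i)}-n\ln E_\varphi(q)+n\ln E_\varphi(p)$. Hence $\{L^*\ge\beta n\}$ is the same event as
\[
\Bigl\{\sum_{i=1}^n\ln\frac{q(X_i)}{p(X_i)}\ge n\bigl(\beta+\ln E_\varphi(q)-\ln E_\varphi(p)\bigr)\Bigr\}.
\]
The factor $\varphi$ therefore enters only through the deterministic shift between $\beta$ and $\beta^*$. I will track this shift carefully, including its sign, against the convention in the definition of $\beta^*$.

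\emph{Step 2: check the hypotheses of Lemma~\ref{lem:mcdiarmid}.}
\begin{itemize}
\item By \eqref{eq:Uk-increment}, the increments $U_k-U_{k-1}=\ln\frac{q(X_k)}{p(X_k)}-D_{\rm KL}(\bQ\|\bP)$ are i.i.d.\ and centred under $\bQ$.
\item They are bounded by $d$, by the definition of $d$.
\item Their conditional variance equals $\sigma^2$ in \eqref{eq:sigma2-def}, because of independence.
\end{itemize}
So the Lemma applies with $\delta=\sigma^2/d^2\in[0,1]$. The one-sided bound stated after the Lemma gives, for every $b\ge0$,
\[
\bP_{\bQ}\bigl(U_n-U_0\ge bn\bigr)\le\exp\Bigl\{-nD\Bigl(\tfrac{\delta+b/d}{1+b/d}\Big\|\tfrac{b/d}{1+b/d}\Bigr)\Bigr\}.
\]
The hypothesis $\beta^*\ge0$ is exactly what is needed to take $b=\beta^*$.

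\emph{Step 3: match the thresholds (main obstacle).} It remains to show that the event from Step~1 is contained in $\{U_n-U_0\ge\beta^*n\}$. Since $L^*=U_n$, the threshold relation depends on what $U_0$ is.
\begin{itemize}
\item If $U_0$ contains only the weight shift $-n\ln E_\varphi(q)+n\ln E_\varphi(p)$, the thresholds coincide and the theorem follows at once from Step~2.
\item With $U_0$ as in \eqref{eq:Uk-def}, however, there is an extra term $nD_{\rm KL}(\bQ\|\bP)$. The event then becomes $\{U_n-U_0\ge n(\beta^*-D_{\rm KL}(\bQ\|\bP))\}$, up to the sign convention just mentioned. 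Since the Bennett-type exponent increases in $\gamma$, this only yields the weaker bound with $\gamma=(\beta^*-D_{\rm KL})/d$.
\end{itemize}

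My first attempt will be to verify directly whether the centring can be dropped. I expect it cannot. If $0<\beta^*<D_{\rm KL}(\bQ\|\bP)$, the law of large numbers gives $\bP_{\bQ}(\sum\ln(q/p)\ge\beta^*n)\to1$, while the right-hand side of \eqref{eq:Lstar-tail} tends to $0$.

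So the decisive step is to fix the reading of the threshold in which \eqref{eq:Lstar-tail} is consistent with \eqref{eq:azuma-bound}. That reading is with $\beta n$ measured from the $\bQ$-mean $\bE_{\bQ}L^*=U_0$. I will state this reading explicitly, and then conclude by the one-sided Lemma with $\gamma^*=\beta^*/d$.
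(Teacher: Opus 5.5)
Your route (the Doob martingale \eqref{eq:Uk-def} plus the one-sided form of Lemma~\ref{lem:mcdiarmid}) is what the paper means by ``direct application''. Your Step~3 diagnosis is correct and goes further than the paper does. With the theorem's sign of $\beta^*$ one has exactly $\{L^*\ge\beta n\}=\{U_n-U_0\ge n(\beta^*-D_{\rm KL}(\bQ\|\bP))\}$. Your law-of-large-numbers example (already for $\varphi\equiv1$, $\sigma^2>0$) shows that \eqref{eq:Lstar-tail} is false as literally written.

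The gap is in the repair you commit to. If $\beta n$ is measured from $U_0=\bE_{\bQ}L^*$, the event is $\{U_n-U_0\ge\beta n\}$. The factor $\varphi$ cancels from this event completely, and the Lemma gives $\gamma=\beta/d$, not $\beta^*/d$. The correct Bennett exponent is strictly increasing in $\gamma$ and is attained on the exponential scale by two-point increments (e.g.\ on a two-letter alphabet, where $\varphi$ can be chosen freely). So passing from $\beta$ to $\beta^*$ is legitimate only when $\beta^*\le\beta$, i.e.\ $E_\varphi(q)\le E_\varphi(p)$, and fails in general otherwise. Nor can \eqref{eq:azuma-bound} serve as the anchor: its event is precisely $\{U_n-U_0>\beta n\}$, and its $\beta^*$ has the opposite sign to the theorem's, so it has the same defect. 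The reading that actually produces $\gamma^*=\beta^*/d$ under the hypothesis $\beta^*\ge0$ centres at $nD_{\rm KL}(\bQ\|\bP)$, the $\bQ$-mean of the untilted sum, not at $\bE_{\bQ}L^*$:
\[
\{L^*\ge(\beta+D_{\rm KL}(\bQ\|\bP))n\}
=\Bigl\{\textstyle\sum_{i=1}^n\ln\frac{q(X_i)}{p(X_i)}\ge(\beta^*+D_{\rm KL}(\bQ\|\bP))n\Bigr\}
=\{U_n-U_0\ge\beta^*n\}.
\]
Equivalently, keep the threshold $\beta n$ and use your $\gamma=(\beta^*-D_{\rm KL}(\bQ\|\bP))/d$ under $\beta^*\ge D_{\rm KL}(\bQ\|\bP)$. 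This is not a weakening; it is the correct bound for the literal event.

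Second, the inequality you transcribe in Step~2 inherits a misquotation from Lemma~\ref{lem:mcdiarmid}: $\delta$ and $\gamma$ are interchanged relative to McDiarmid's bound. With $\delta=\sigma^2/d^2$ and $\gamma=\beta/d$, the correct one-sided estimate is $\exp\{-nD\bigl(\tfrac{\gamma+\delta}{1+\delta}\,\big\|\,\tfrac{\delta}{1+\delta}\bigr)\}$. Here $\tfrac{\delta}{1+\delta}$ is the mass at $+d$ of the extremal mean-zero law on $\{d,-\sigma^2/d\}$. The displayed form fails already for $n=1$ with increments $\pm d$ of probability $\tfrac12$ (so $\delta=1$). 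It would give $\bP(U_1-U_0\ge\beta)\le\tfrac{\gamma}{1+\gamma}<\tfrac12$ for $0<\gamma<1$, whereas the true probability is $\tfrac12$. The displayed exponent is also decreasing in $\gamma$, so your monotonicity remark in Step~3 holds only for the corrected form. With the corrected exponent and the threshold $(\beta+D_{\rm KL}(\bQ\|\bP))n$, your three steps give a complete proof.
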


Lemma~\ref{lem:mcdiarmid} is quoted from \cite{McD}. Theorem~\ref{thm:Lstar-tail} is its direct application to the
tilted log-likelihood ratio \eqref{eq:Lstar-def}; the only dependence on the context weight $\varphi$ is through
$E_\varphi(p)$ and $E_\varphi(q)$.

 \section{Examples and Applications}\label{sec:examples}

\added{The identities of Section~\ref{sec:main-results} reduce the computation of $D^{\rw}_{B,\alpha}$ and $D^{\rw}_C$ to the single-letter weighted affinity
\[
\rho^{\rw}_{\alpha}(p,q)=\int \varphi(x)\,p(x)^{\alpha}q(x)^{1-\alpha}\,\rd\mu(x),
\]
followed by optimisation over $\alpha\in[0,1]$. We work this out for Gaussian, Poisson, and exponential families, highlighting how the context weight $\varphi$ modifies the classical formulas. When $\varphi\equiv 1$, the expressions reduce to the standard unweighted Bhattacharyya and Chernoff quantities; more involved non-exponential-family computations (such as the Cauchy location--scale family) are deferred to the Appendix.}

{\subsection{A Numerical Illustration}\label{subsec:numerical-illustration}

This subsection illustrates the behaviour of $\alpha^\ast$ and $D_C^{\rw}$ under a non-trivial factorised weight and serves as a direct numerical verification of the Bregman identities of Section~\ref{subsec:bregman}: for the model below the affinity $\rho_\alpha^{\rw}$ is available in closed form, and we check that closed-form evaluation and direct numerical integration agree to machine precision.

Consider the asymmetric Gaussian hypotheses
\[
H_0:\ \bP=\cN(\mu_0,\sigma_0^2),\qquad
H_1:\ \bQ=\cN(\mu_1,\sigma_1^2),\qquad
(\mu_0,\sigma_0^2)=(0,1),\ (\mu_1,\sigma_1^2)=(3,2),
\]
with a non-indicator factorised weight
\begin{equation}\label{eq:illust-weight-gauss}
\varphi(x)\;=\;\exp\bigl(-\beta(x-x_0)^2\bigr),\qquad x_0=0,\ \beta\ge 0.
\end{equation}
At $\beta=0$, one has $\varphi\equiv 1$ and the unweighted Chernoff information is recovered. For $\beta>0$, the weight concentrates near $x_0=\mu_0$; in particular, \eqref{eq:illust-weight-gauss} is not an indicator-type weight, so the weighted problem does not reduce to the unweighted Chernoff information on a restricted domain.

The asymmetry $\sigma_0^2\neq\sigma_1^2$ and the centring of $\varphi$ at the $H_0$ mean are essential for the illustration. In a fully symmetric configuration ($\sigma_0=\sigma_1$, $\mu_1=-\mu_0$, $x_0=0$), the problem is invariant under $\alpha\leftrightarrow 1-\alpha$, so the optimum is pinned to $\alpha^\ast=1/2$ for every $\beta$ and the effect of the weight on the Chernoff compromise is invisible. Asymmetric hypotheses are precisely where the weighted formalism is operationally distinct from the classical one, and it is this distinction that the numerics below is designed to expose.

Writing $A(\alpha)=\alpha/\sigma_0^2+(1-\alpha)/\sigma_1^2+2\beta$, $B(\alpha)=\alpha\mu_0/\sigma_0^2+(1-\alpha)\mu_1/\sigma_1^2+2\beta x_0$, and $C(\alpha)=\alpha\mu_0^2/\sigma_0^2+(1-\alpha)\mu_1^2/\sigma_1^2+2\beta x_0^2$, a direct Gaussian integration yields
\begin{equation}\label{eq:illust-rho-closed}
\ln\rho_\alpha^{\rw}(p,q)
=\tfrac12\ln\!\tfrac{2\pi}{A(\alpha)}
-\tfrac{\alpha}{2}\ln(2\pi\sigma_0^2)
-\tfrac{1-\alpha}{2}\ln(2\pi\sigma_1^2)
-\tfrac12\Bigl(C(\alpha)-\tfrac{B(\alpha)^2}{A(\alpha)}\Bigr),
\end{equation}
and maximising \eqref{eq:illust-rho-closed} over $\alpha\in[0,1]$ gives $\alpha^\ast(\beta)$ and $D_C^{\rw}(\bP,\bQ)(\beta)$. Table~\ref{tab:num-ill} reports their values at three selected $\beta$. Direct numerical integration of $\rho_\alpha^{\rw}$ from its definition agrees with \eqref{eq:illust-rho-closed} to machine precision on all tabulated entries, which confirms the Bregman identities of Section~\ref{subsec:bregman} numerically for this example.

\begin{table}[H]
\caption{Optimal skewing parameter and weighted Chernoff information for the Gaussian example \eqref{eq:illust-weight-gauss}, obtained by maximising \eqref{eq:illust-rho-closed}.\label{tab:num-ill}}
\begin{tabularx}{\textwidth}{rCC}
\toprule
\boldmath{$\beta$} & \boldmath{$\alpha^\ast(\beta)$} & \textbf{\boldmath{$D_C^{\rw}(\bP,\bQ)$}} \\
\midrule
$0$ (unweighted) & $0.4153$ & $0.8018$ \\
$1/16$           & $0.3355$ & $0.9827$ \\
$1/4$            & $0.0963$ & $1.4935$ \\
\bottomrule
\end{tabularx}
\end{table}

The monotone growth of $\beta\mapsto D_C^{\rw}$ and the leftward shift of $\alpha^\ast(\beta)$ illustrate a qualitative conclusion of Section~\ref{sec:main-results}: localising $\varphi$ near $\mu_0$ preferentially retains observations that are more probable under $H_0$ and thereby increases the effective discrimination rate, while simultaneously moving the optimal tilting towards the $H_0$ side. The classical unweighted limit is recovered at $\beta=0$.

In the language of hypothesis testing, $\alpha^\ast$ is the parameter that balances the exponential rates of the type-I and type-II losses at the Bayes optimum: the type-I exponent equals $\alpha^\ast D_C^{\rw}$ and the type-II exponent equals $(1-\alpha^\ast)D_C^{\rw}$ (cf.\ Section~\ref{sec:main-results}). A leftward shift of $\alpha^\ast$ therefore corresponds to reallocating the available exponential budget towards faster decay of the type-II loss at the expense of the type-I loss, which is the optimal response to a weight that concentrates mass in the region where $H_0$ is more plausible.

\subsubsection*{Data and Code Availability}
A Jupyter/Colab notebook reproducing Table~\ref{tab:num-ill} and Figures~\ref{fig:num-ill-alpha}--\ref{fig:num-ill-DC}, together with the direct-integration verification of \eqref{eq:illust-rho-closed}, is archived on Zenodo~\cite{Kalimulina2026Code} and mirrored on GitHub.
}
\begin{center}
\begin{figure}[H]
\includegraphics[width=0.9\linewidth]{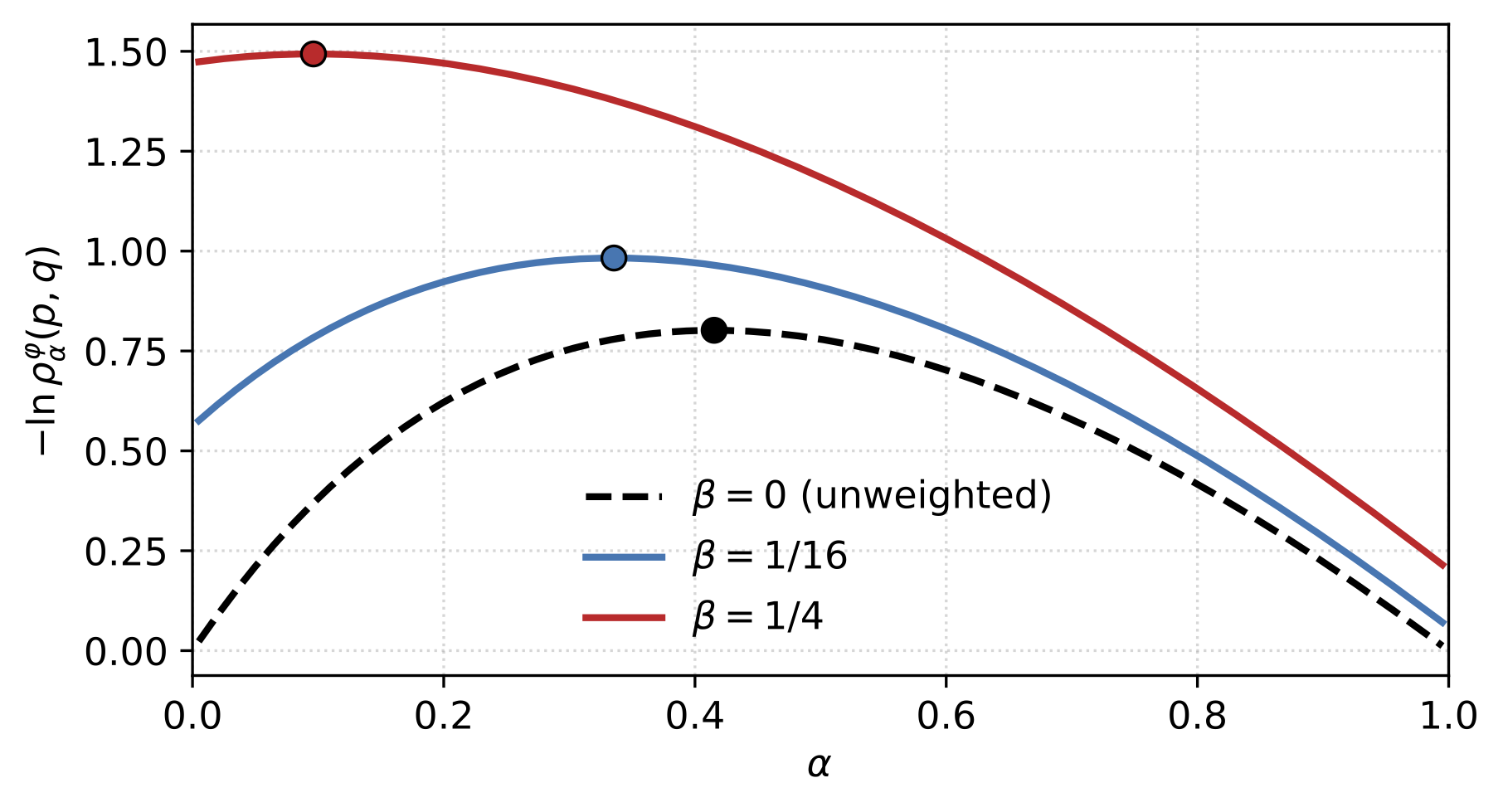}
\caption{The map $\alpha\mapsto-\ln\rho_\alpha^{\rw}(p,q)$ for the Gaussian hypotheses $\cN(0,1)$, $\cN(3,2)$ with weight \eqref{eq:illust-weight-gauss}, for $\beta\in\{0,1/16,1/4\}$. The optimum $\alpha^\ast$ is marked by a bullet on each curve and shifts to the left as $\beta$ increases.}
\label{fig:num-ill-alpha}
\end{figure}
\end{center}
 \vspace{-12pt}
 \begin{center}
\begin{figure}[H]
\includegraphics[width=0.9\linewidth]{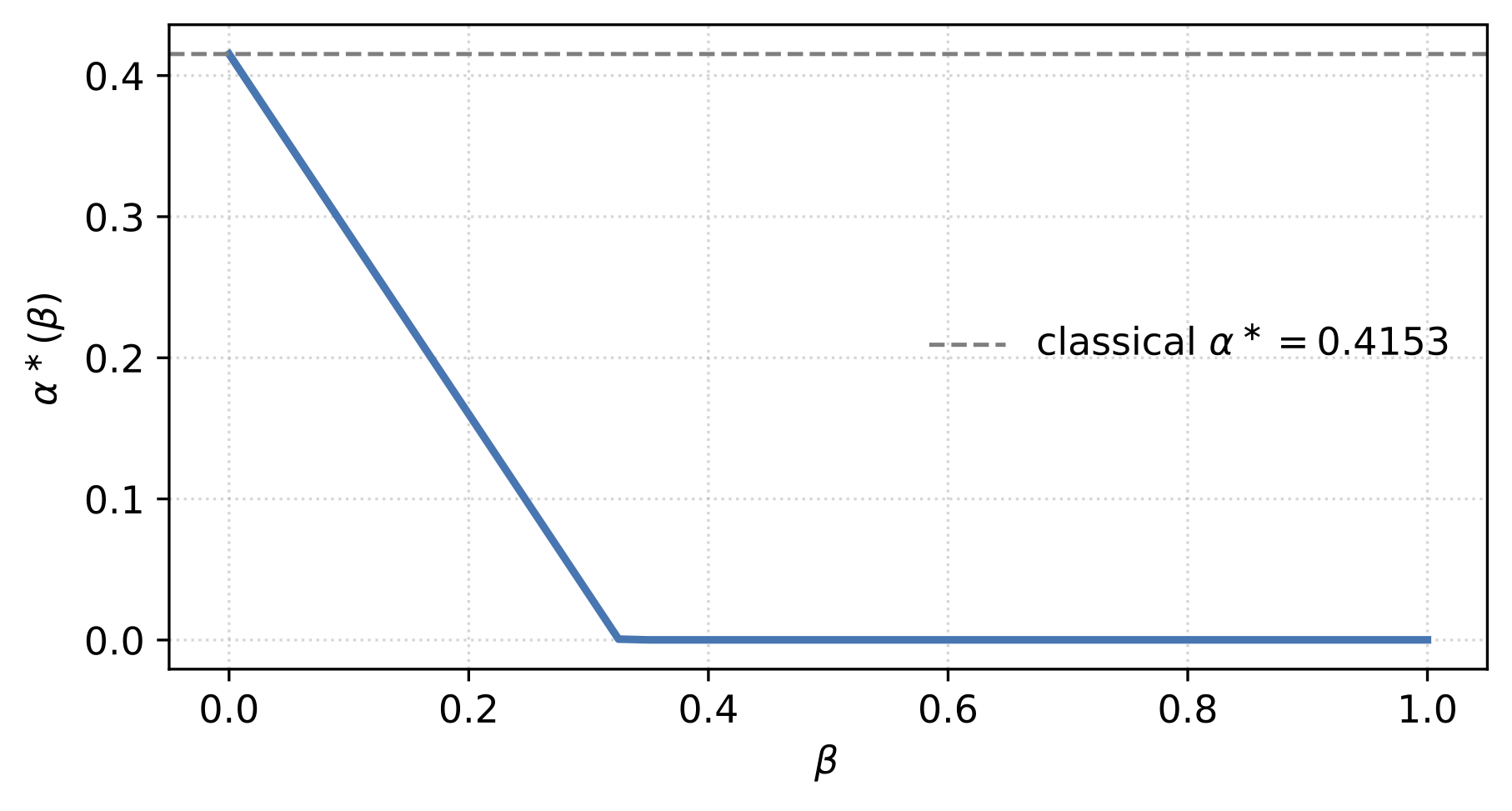}
\caption{Optimal skewing parameter $\alpha^\ast(\beta)$ for the Gaussian example. The dashed line marks the unweighted value $\alpha^\ast(0)$.}
\label{fig:num-ill-alpha-star}
\end{figure}
\end{center}
\vspace{-12pt}
\begin{center}
\begin{figure}[H]
\includegraphics[width=0.9\linewidth]{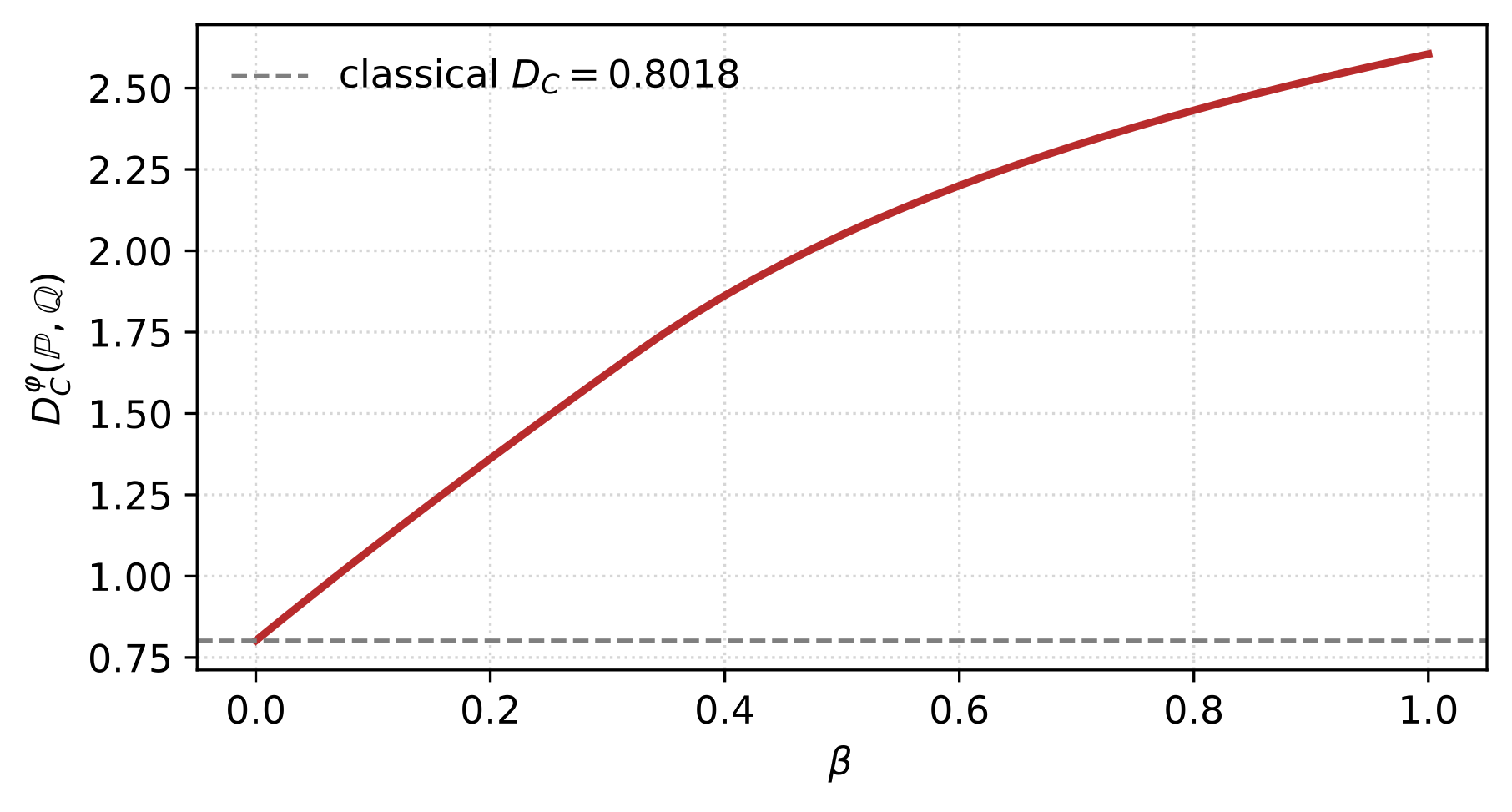}
\caption{Weighted Chernoff information $\beta\mapsto D_C^{\rw}(\bP,\bQ)$. The dashed line marks the classical value $D_C$ recovered at $\beta=0$.}
\label{fig:num-ill-DC}
\end{figure}
\end{center}
\subsection{Gaussian Models}\label{subsec:gauss-examples}

Throughout this subsection, the reference measure is the Lebesgue measure on $\bR^d$.
We compute the weighted Bhattacharyya coefficient
\[
\rho^{\rw}_{\alpha}(\bP,\bQ)=\int_{\bR^d}\varphi(x)\,p(x)^{\alpha}q(x)^{1-\alpha}\,\rd x,
\qquad \alpha\in[0,1],
\]
together with the weighted Bhattacharyya distance
$D^{\rw}_{B,\alpha}(\bP,\bQ):=-\ln\rho^{\rw}_{\alpha}(\bP,\bQ)$ and the weighted Chernoff information
$D^{\rw}_{C}(\bP,\bQ)=\max_{\alpha\in[0,1]}D^{\rw}_{B,\alpha}(\bP,\bQ)$ (Definition~\ref{def:weighted-chernoff}).
Note that, unlike the unweighted case, $\rho^{\rw}_{\alpha}$ is not restricted to $[0,1]$ and
$D^{\rw}_{B,\alpha}$ (hence also $D_C^{\rw}$) may take negative values.

\begin{example}[Gaussian weighted Bhattacharyya coefficient with exponential weight]\label{ex:gauss-expweight}
Let $\bP=\mathcal N(\mu_1,\Sigma_1)$ and $\bQ=\mathcal N(\mu_2,\Sigma_2)$ on $\bR^d$, where
$\Sigma_1\succ0$ and $\Sigma_2\succ0$, and let $\varphi(x)=e^{\gamma^{\rT}x}$ for some $\gamma\in\bR^d$.
Denote by $p,q$ the corresponding densities.
For $\alpha\in[0,1]$ define
\begin{align*}
\Sigma_{\alpha}&:=\left(\alpha\Sigma_1^{-1}+(1-\alpha)\Sigma_2^{-1}\right)^{-1},
\\
\widetilde\mu_{\alpha}
&:=\Sigma_{\alpha}\Big(\alpha\Sigma_1^{-1}\mu_1+(1-\alpha)\Sigma_2^{-1}\mu_2+\gamma\Big).
\end{align*}
Then
\begin{align}\label{eq:gauss-rho-expweight}
&\rho^{\rw}_{\alpha}(\bP,\bQ)
=\int_{\bR^d}e^{\gamma^{\rT}x}\,p(x)^{\alpha}q(x)^{1-\alpha}\,\rd x
\nonumber\\
&=\frac{|\Sigma_{\alpha}|^{1/2}}{|\Sigma_1|^{\alpha/2}|\Sigma_2|^{(1-\alpha)/2}}
\exp\!\left\{-\frac12\Big(\alpha\mu_1^{\rT}\Sigma_1^{-1}\mu_1+(1-\alpha)\mu_2^{\rT}\Sigma_2^{-1}\mu_2
-\widetilde\mu_{\alpha}^{\rT}\Sigma_{\alpha}^{-1}\widetilde\mu_{\alpha}\Big)\right\}.
\end{align}
Consequently,
\begin{align}\label{eq:gauss-Db-expweight}
&D^{\rw}_{B,\alpha}(\bP,\bQ)=  \nonumber
\\
&=\frac12\left(
\alpha\mu_1^{\rT}\Sigma_1^{-1}\mu_1+(1-\alpha)\mu_2^{\rT}\Sigma_2^{-1}\mu_2
-\widetilde\mu_{\alpha}^{\rT}\Sigma_{\alpha}^{-1}\widetilde\mu_{\alpha}
+\ln\frac{|\Sigma_1|^{\alpha}|\Sigma_2|^{1-\alpha}}{|\Sigma_{\alpha}|}
\right).
\end{align}
In particular, setting $\gamma=0$ (i.e.,\ $\varphi\equiv 1$) reduces \eqref{eq:gauss-Db-expweight}
to the classical (unweighted) Gaussian Bhattacharyya distance; see, e.g., \cite{N}.
\end{example}

\begin{Corollary}[Common covariance]\label{cor:gauss-commonSigma-expweight}
In Example~\ref{ex:gauss-expweight}, assume $\Sigma_1=\Sigma_2=\Sigma\succ0$ and keep the exponential weight
$\varphi(x)=e^{\gamma^{\rT}x}$.
Set $\delta:=\mu_1-\mu_2$, $\|v\|_{\Sigma^{-1}}^2:=v^{\rT}\Sigma^{-1}v$, and
$\mu_{\alpha}:=\alpha\mu_1+(1-\alpha)\mu_2$.
Then, for any $\alpha\in[0,1]$,
\begin{align}\label{eq:gauss-commonSigma-expweight-rho}
\rho^{\rw}_{\alpha}(\bP,\bQ)
&=\int_{\bR^d} e^{\gamma^{\rT}x}\,p(x)^\alpha q(x)^{1-\alpha}\,\rd x    \nonumber
\\
&=\exp\!\left\{-\frac{\alpha(1-\alpha)}{2}\|\delta\|_{\Sigma^{-1}}^2
+\gamma^{\rT}\mu_\alpha+\frac12\gamma^{\rT}\Sigma\gamma\right\},
\end{align}
and therefore
\begin{equation}\label{eq:gauss-commonSigma-expweight-Db}
D^{\rw}_{B,\alpha}(\bP,\bQ)
=-\ln \rho^{\rw}_{\alpha}(\bP,\bQ)
=\frac{\alpha(1-\alpha)}{2}\|\delta\|_{\Sigma^{-1}}^2
-\gamma^{\rT}\mu_{\alpha}-\frac12\gamma^{\rT}\Sigma\gamma.
\end{equation}
If $\mu_1\neq\mu_2$ and the unconstrained maximiser
\[
\widetilde\alpha=\frac12-\frac{\gamma^{\rT}\delta}{\|\delta\|_{\Sigma^{-1}}^2}
\]
belongs to $(0,1)$, then $\alpha^*=\widetilde\alpha$; otherwise the maximum over $\alpha\in[0,1]$
is attained at the nearest boundary point $\alpha^*\in\{0,1\}$.
In all cases,
\[
D_C^{\rw}(\bP,\bQ)=\max_{\alpha\in[0,1]}D^{\rw}_{B,\alpha}(\bP,\bQ)=D^{\rw}_{B,\alpha^*}(\bP,\bQ).
\]
In particular, for $\gamma=0$ (i.e.,\ $\varphi\equiv 1$) we recover $\alpha^*=1/2$ and
$D_C(\bP,\bQ)=\|\delta\|_{\Sigma^{-1}}^2/8$.
\end{Corollary}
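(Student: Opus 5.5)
The plan is to prove the corollary by a direct computation rather than by specialising the general formula \eqref{eq:gauss-rho-expweight}, because with a common covariance the geometric mixture collapses to a single Gaussian. Write $p$ and $q$ as the $\mathcal N(\mu_1,\Sigma)$ and $\mathcal N(\mu_2,\Sigma)$ densities. The normalising constants combine as $(2\pi)^{-d/2}|\Sigma|^{-1/2}$, since their exponents $\alpha$ and $1-\alpha$ sum to one. The quadratic exponent is
\[
-\tfrac12\bigl[\alpha(x-\mu_1)^{\rT}\Sigma^{-1}(x-\mu_1)+(1-\alpha)(x-\mu_2)^{\rT}\Sigma^{-1}(x-\mu_2)\bigr].
\]
By the elementary variance identity for a two-point mixture of centres, it equals
\[
-\tfrac12(x-\mu_\alpha)^{\rT}\Sigma^{-1}(x-\mu_\alpha)-\tfrac{\alpha(1-\alpha)}{2}\|\delta\|_{\Sigma^{-1}}^2 .
\]
To check this, expand both sides and use $\alpha\mu_1\mu_1^{\rT}+(1-\alpha)\mu_2\mu_2^{\rT}-\mu_\alpha\mu_\alpha^{\rT}=\alpha(1-\alpha)\delta\delta^{\rT}$. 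Hence
\[
p^\alpha q^{1-\alpha}=e^{-\alpha(1-\alpha)\|\delta\|^2_{\Sigma^{-1}}/2}\,n_{\mu_\alpha,\Sigma}(x),
\]
where $n_{\mu_\alpha,\Sigma}$ is the $\mathcal N(\mu_\alpha,\Sigma)$ density.

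Next, integrate against $\varphi(x)=e^{\gamma^{\rT}x}$. This integral is the Gaussian moment generating function at $\gamma$, which equals $\exp\{\gamma^{\rT}\mu_\alpha+\tfrac12\gamma^{\rT}\Sigma\gamma\}$. This gives \eqref{eq:gauss-commonSigma-expweight-rho}, and taking $-\ln$ gives \eqref{eq:gauss-commonSigma-expweight-Db}. As a consistency check, the expression also follows from \eqref{eq:gauss-rho-expweight} with $\Sigma_\alpha=\Sigma$ and $\widetilde\mu_\alpha=\mu_\alpha+\Sigma\gamma$. Finiteness of $\rho^{\rw}_\alpha$ for all $\alpha$, which the standing assumptions require, is evident from the formula.

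For the optimisation, use $\mu_\alpha=\mu_2+\alpha\delta$. Then $\alpha\mapsto D^{\rw}_{B,\alpha}$ is the quadratic
\[
\tfrac{\alpha(1-\alpha)}{2}\|\delta\|^2_{\Sigma^{-1}}-\alpha\gamma^{\rT}\delta+\text{const}.
\]
When $\mu_1\ne\mu_2$ its leading coefficient $-\tfrac12\|\delta\|^2_{\Sigma^{-1}}$ is negative, so it is strictly concave. Setting the derivative $\tfrac{1-2\alpha}{2}\|\delta\|^2_{\Sigma^{-1}}-\gamma^{\rT}\delta$ to zero gives $\widetilde\alpha$. Because a strictly concave parabola is monotone on each side of its vertex, its maximum over $[0,1]$ is attained at $\widetilde\alpha$ if $\widetilde\alpha\in(0,1)$. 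Otherwise it is attained at the endpoint of $[0,1]$ nearest to $\widetilde\alpha$, i.e.\ at the projection of $\widetilde\alpha$ onto $[0,1]$. The identity $D_C^{\rw}=D^{\rw}_{B,\alpha^*}$ is then just Definition~\ref{def:weighted-chernoff}. Finally, for $\gamma=0$ we get $\widetilde\alpha=\tfrac12$ and $D_C=\tfrac18\|\delta\|^2_{\Sigma^{-1}}$.

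The only step with real content is the completing-the-square identity in the first step. Everything else is routine: the Gaussian moment generating function and maximising a one-dimensional concave quadratic over an interval. I expect no genuine obstacle beyond keeping the signs and the factor $\tfrac12$ correct.
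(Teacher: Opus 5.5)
Your proof is correct and follows essentially the paper's route. The paper obtains \eqref{eq:gauss-commonSigma-expweight-rho} by specialising \eqref{eq:gauss-rho-expweight} to $\Sigma_1=\Sigma_2=\Sigma$, which gives a unit determinant prefactor and a Gaussian MGF factor; this is exactly your consistency check, whereas your direct completion of the square around $\mu_\alpha$ keeps the computation self-contained. The optimisation step, differentiating the strictly concave quadratic in $\alpha$ and projecting $\widetilde\alpha$ onto $[0,1]$, is the same as in the paper.
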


\begin{proof}
The expression for $\rho^{\rw}_\alpha$ follows by simplifying Example~\ref{ex:gauss-expweight} under
$\Sigma_1=\Sigma_2=\Sigma$, which makes the determinant prefactor equal to $1$ and yields a Gaussian MGF term
$\exp(\gamma^{\rT}\mu_\alpha+\tfrac12\gamma^{\rT}\Sigma\gamma)$.
The maximiser follows by differentiating \eqref{eq:gauss-commonSigma-expweight-Db} in $\alpha$.
\end{proof}


Choosing an exponential weight $\varphi(x)=e^{\gamma^{\rT}x}$ corresponds to exponential tilting: for a Gaussian $X\sim\mathcal N(\mu,\Sigma)$, this tilting keeps the covariance and shifts the mean to $\mu+\Sigma\gamma$ (with normalisation factor $\exp(\gamma^{\rT}\mu+\tfrac12\gamma^{\rT}\Sigma\gamma)$), which is why the weighted affinities remain available in closed form. 
In particular, the optimal Chernoff parameter is no longer forced to be $\alpha^*=1/2$ and, as \eqref{eq:gauss-commonSigma-expweight-Db} shows, sufficiently strong tilting can push the maximiser to the boundary $\alpha^*\in\{0,1\}$.

\subsection{Poisson Models}\label{subsec:poisson-examples}

\begin{example}[Poisson model with exponential weight]\label{ex:poisson}
Let ${\cal X}=\mathbb N_0=\{0,1,2,\dots\}$ and let $\mu$ be the counting measure on ${\cal X}$.
Fix two hypotheses $\bP=\mathrm{Poi}(\lambda_1)$ and $\bQ=\mathrm{Poi}(\lambda_2)$ with $\lambda_1,\lambda_2>0$,
and write $p=p_{\lambda_1}$ and $q=q_{\lambda_2}$.
{Throughout this subsection we work under the standing assumption of Section~\ref{sec:setup}, namely, that the observations $X_1,\ldots,X_n$ are i.i.d.\ (distributed as $\bP$ under $H_0$ and as $\bQ$ under $H_1$), and that the weight $\varphi$ factorises across observations (Assumption~\ref{ass:phi-factorised}).}
We still consider the exponential weight
\(
\varphi_\gamma(k)=e^{\gamma k},~ \gamma\in\mathbb R.
\)
(For $\gamma=0$ we recover the unweighted case $\varphi\equiv 1$.)
{Equivalently, setting $\varepsilon:=e^\gamma>0$, the weight takes the form $\varphi(k)=\varepsilon^k$; this reparameterisation is convenient in applications where $\varepsilon\in(0,1)$ models a per-event discount factor, while $\gamma\in\mathbb R$ is the natural parameter for the exponential-family calculations below. The two parameterisations are equivalent.}

For $\alpha\in[0,1]$, set
\[
\lambda_\alpha:=\lambda_1^\alpha\lambda_2^{\,1-\alpha}.
\]

\smallskip
\noindent\textbf{(a) Weighted Bhattacharyya coefficient and Chernoff arc. }
A direct summation gives
\begin{align}\label{eq:poisson-rho}
\rho^{\rw}_\alpha(\bP,\bQ)
&=\sum_{k=0}^\infty \varphi_\gamma(k)\,p(k)^\alpha q(k)^{1-\alpha}
=\exp\!\big\{-\alpha\lambda_1-(1-\alpha)\lambda_2+e^\gamma\lambda_\alpha\big\}.
\end{align}
Hence, by Definition~\ref{def:weighted-bhat},
\begin{equation}\label{eq:poisson-Db}
D^{\rw}_{B,\alpha}(\bP,\bQ)
=-\ln\rho^{\rw}_\alpha(\bP,\bQ)
=\alpha\lambda_1+(1-\alpha)\lambda_2-e^\gamma\lambda_\alpha.
\end{equation}

Moreover, the normalised weighted geometric mixture (Chernoff-tilted density) from \eqref{eq:Epq-def}
takes the form
\[
(pq)_\alpha(k)=\frac{\varphi_\gamma(k)\,p(k)^\alpha q(k)^{1-\alpha}}{\rho^{\rw}_\alpha(\bP,\bQ)}
=\exp\!\{-e^\gamma\lambda_\alpha\}\,\frac{(e^\gamma\lambda_\alpha)^k}{k!}
=\mathrm{Poi}(e^\gamma\lambda_\alpha).
\]

\smallskip
\noindent\textbf{(b) Optimal Chernoff parameter.}
If $\lambda_1\neq\lambda_2$, then $\alpha\mapsto D^{\rw}_{B,\alpha}(\bP,\bQ)$ is strictly concave on $[0,1]$ since
\[
\frac{\rd^2}{\rd\alpha^2}D^{\rw}_{B,\alpha}(\bP,\bQ)
=-e^\gamma\lambda_\alpha\left(\ln\frac{\lambda_1}{\lambda_2}\right)^2<0;
\]
hence, the maximiser $\alpha^*$ in Definition~\ref{def:weighted-chernoff} is unique.
Differentiating \eqref{eq:poisson-Db} yields the critical point condition
\[
0=\frac{\rd}{\rd\alpha}D^{\rw}_{B,\alpha}(\bP,\bQ)
=\lambda_1-\lambda_2-e^\gamma\lambda_\alpha\ln\!\Big(\frac{\lambda_1}{\lambda_2}\Big).
\]

Equivalently, the (unconstrained) critical point $\alpha=\widetilde\alpha$ satisfies
\[
\lambda_{\widetilde\alpha}=e^{-\gamma}L(\lambda_1,\lambda_2),
\qquad
L(\lambda_1,\lambda_2):=\frac{\lambda_1-\lambda_2}{\ln\lambda_1-\ln\lambda_2}.
\]

In contrast to the unweighted case ($\gamma=0$), the context tilt $\gamma$ may push the optimal
Chernoff parameter to the boundary $\alpha^*\in\{0,1\}$.

Thus, the unconstrained maximiser is
\[
\widetilde\alpha
=\frac{\ln L(\lambda_1,\lambda_2)-\gamma-\ln\lambda_2}{\ln\lambda_1-\ln\lambda_2},
\]
and the maximiser on $[0,1]$ is $\alpha^*=\Pi_{[0,1]}(\widetilde\alpha)$, where
\[
\Pi_{[0,1]}(a):=\min\{1,\max\{0,a\}\}.
\]
Finally,
\[
D_C^{\rw}(\bP,\bQ)=\max_{\alpha\in[0,1]}D^{\rw}_{B,\alpha}(\bP,\bQ)=D^{\rw}_{B,\alpha^*}(\bP,\bQ).
\]

If $\lambda_1=\lambda_2$, then $\rho^{\rw}_\alpha(\bP,\bQ)$ does not depend on $\alpha$ and
$D_C^{\rw}(\bP,\bQ)=D^{\rw}_{B,\alpha}(\bP,\bQ)$ for any $\alpha\in[0,1]$.
\end{example}

\noindent\textit{Derivation of \eqref{eq:poisson-rho}.}
For $k\in\mathbb N_0$,
\[
p(k)^\alpha q(k)^{1-\alpha}
=\exp\!\{-\alpha\lambda_1-(1-\alpha)\lambda_2\}\,\frac{\lambda_\alpha^k}{k!},
\]
so multiplying by $e^{\gamma k}$ and summing over $k$ gives
\begin{align*}
    \rho^{\rw}_\alpha(\bP,\bQ)  &   =\exp\{-\alpha\lambda_1-(1-\alpha)\lambda_2\}\sum_{k\ge0}(e^\gamma\lambda_\alpha)^k/k!
    \\
& =\exp\{-\alpha\lambda_1-(1-\alpha)\lambda_2+e^\gamma\lambda_\alpha\}.
\end{align*} \hfill$\Box$

\subsection{Exponential Models}\label{subsec:exp-examples}

\begin{example}[Exponential model with exponential weight]\label{ex:exp}
Let ${\cal X}=\bR_+=[0,\infty)$ with Lebesgue measure.
Fix two hypotheses $\bP=\mathrm{Exp}(\lambda_1)$ and $\bQ=\mathrm{Exp}(\lambda_2)$ with rates
$\lambda_1,\lambda_2>0$, and write
\[
p(x)=\lambda_1 e^{-\lambda_1 x}{\bf 1}\{x\ge 0\},\qquad
q(x)=\lambda_2 e^{-\lambda_2 x}{\bf 1}\{x\ge 0\}.
\]
Consider the exponential weight $\varphi_\gamma(x)=e^{\gamma x}$ with
\[
\gamma<\min\{\lambda_1,\lambda_2\},
\]
so that $\rho^{\rw}_\alpha(\bP,\bQ)\in(0,\infty)$ for all $\alpha\in[0,1]$.
For $\alpha\in[0,1]$ set
\[
\lambda_\alpha:=\alpha\lambda_1+(1-\alpha)\lambda_2 .
\]

\smallskip
\noindent\textbf{(a) Weighted Bhattacharyya coefficient and Chernoff arc.}
A direct computation gives
\[
\rho^{\rw}_{\alpha}(\bP,\bQ)
=\int_{0}^{\infty} e^{\gamma x}\,p(x)^{\alpha}q(x)^{1-\alpha}\,\rd x
=\frac{\lambda_1^{\alpha}\lambda_2^{\,1-\alpha}}{\lambda_\alpha-\gamma}.
\]
Hence
\[
D^{\rw}_{B,\alpha}(\bP,\bQ)
=-\ln\rho^{\rw}_{\alpha}(\bP,\bQ)
=\ln(\lambda_\alpha-\gamma)-\alpha\ln\lambda_1-(1-\alpha)\ln\lambda_2.
\]
Moreover, the Chernoff-tilted density $(pq)_\alpha$ from \eqref{eq:Epq-def} is again exponential:
\[
(pq)_\alpha(x)
=\frac{e^{\gamma x}p(x)^{\alpha}q(x)^{1-\alpha}}{\rho^{\rw}_{\alpha}(\bP,\bQ)}
=(\lambda_\alpha-\gamma)e^{-(\lambda_\alpha-\gamma)x}{\bf 1}\{x\ge 0\}
=\mathrm{Exp}(\lambda_\alpha-\gamma).
\]

\smallskip
\noindent\textbf{(b) Optimal Chernoff parameter.}
If $\lambda_1\neq\lambda_2$, then $\alpha\mapsto D^{\rw}_{B,\alpha}(\bP,\bQ)$ is strictly concave on $[0,1]$;
hence, the maximiser $\alpha^*$ in Definition~\ref{def:weighted-chernoff} is unique.
Differentiating yields the critical point condition
\[
\frac{\lambda_1-\lambda_2}{\lambda_\alpha-\gamma}=\ln\!\Big(\frac{\lambda_1}{\lambda_2}\Big).
\]
Equivalently, the (unconstrained) critical point $\alpha=\widetilde\alpha$ satisfies
\[
\lambda_{\widetilde\alpha}-\gamma=L(\lambda_1,\lambda_2),
\qquad
L(\lambda_1,\lambda_2):=\frac{\lambda_1-\lambda_2}{\ln\lambda_1-\ln\lambda_2},
\]
so that
\[
\widetilde\alpha=\frac{\gamma+L(\lambda_1,\lambda_2)-\lambda_2}{\lambda_1-\lambda_2}.
\]
The maximiser on $[0,1]$ is $\alpha^*=\Pi_{[0,1]}(\widetilde\alpha)$ (projection onto $[0,1]$), and
\[
D_C^{\rw}(\bP,\bQ)=\max_{\alpha\in[0,1]}D^{\rw}_{B,\alpha}(\bP,\bQ)=D^{\rw}_{B,\alpha^*}(\bP,\bQ).
\]

If $\lambda_1=\lambda_2=\lambda$, then $\rho^{\rw}_\alpha(\bP,\bQ)=\lambda/(\lambda-\gamma)$ does not depend on $\alpha$,
so any $\alpha\in[0,1]$ is optimal and $D_C^{\rw}(\bP,\bQ)=\ln(\lambda-\gamma)-\ln\lambda$.
Setting $\gamma=0$ (i.e.,\ $\varphi\equiv 1$) recovers the classical unweighted expressions.
\end{example}

\subsubsection*{Additional Example (Baseline, Non-Exponential Family)}
Appendix~\ref{app:cauchy} contains a closed-form illustration for the Cauchy location--scale family.
Since the Cauchy family is not an exponential family, this example complements the main text by showing that,
even in the unweighted baseline case $\varphi\equiv 1$, the Bhattacharyya coefficient (in particular $\rho_{1/2}$)
and the Chernoff information may involve special functions (complete elliptic integrals).
For nontrivial weights $\varphi$, the symmetry $\rho_\alpha=\rho_{1-\alpha}$ (hence $\alpha^*=1/2$) typically fails and
a comparable closed form is not available, so we keep the baseline Cauchy computation in the appendix.

\subsection{Extension to $M$-ary Hypothesis Testing}\label{subsec:mary}

\added{We now record the finite-$M$ analogue of Theorem~\ref{thm:optimal-sum-loss}. The key observation is that the optimal $M$-ary pointwise loss is squeezed between pairwise minima (Lemma~\ref{lem:mary-PairwiseMinima}), and each pairwise term has logarithmic rate given by the corresponding weighted Chernoff information. Hence the overall $M$-ary rate is determined by the closest pair in terms of $D_C^{\rw}$.}

Fix an integer $M\ge 2$ and let $\bP_1,\ldots,\bP_M$ be probability measures on ${\cal X}$
dominated by $\mu$, with strictly positive densities $p_1,\ldots,p_M$.
Let $X_1^n=(X_1,\ldots,X_n)$ be i.i.d.\ under each hypothesis
$H_i:\ X_1^n\sim \bP_i^{\otimes n}$.
Assume that the weight function factorises as in Assumption~\ref{ass:phi-factorised}.

Assume moreover that for every $1\le i<j\le M$ and every $\alpha\in[0,1]$,
\[
\rho^{\rw}_\alpha(p_i,p_j)
=\int_{\cal X}\varphi(x)\,p_i(x)^\alpha p_j(x)^{1-\alpha}\,\rd\mu(x)\in(0,\infty),
\]
\added{so that all pairwise weighted Chernoff information values are well-defined
and inequality
\begin{equation}\label{finite}
\max\limits_{i\not=j}\sup\limits_{\alpha\in [0,1]}\int\varphi(x)|\ln\frac{p_i(x)}{p_j(x)}| p_i(x)^{\alpha} p_j(x)^{1-\alpha}
{\rd}\mu(x)<\infty
\end{equation}
holds true.}

A (deterministic) $M$-ary decision rule is a measurable map $\delta_n:{\cal X}^n\to\{1,\ldots,M\}$.
Define the context-sensitive loss under $H_i$ by
\[
L_{i,n}(\delta_n):=\bE_{\bP_i^{\otimes n}}\!\left[\varphi(X_1^n)\,{\bf 1}\{\delta_n(X_1^n)\neq i\}\right],
\]
and the total loss
\[
L_{n,M}(\delta_n):=\sum_{i=1}^M L_{i,n}(\delta_n),\qquad
L_{n,M}^*:=\inf_{\delta_n} L_{n,M}(\delta_n).
\]

\begin{Proposition}[Pointwise form of the optimal $M$-ary total loss]\label{prop:mary-pointwise}
For each $n\ge 1$,
\begin{equation}\label{eq:mary-opt-integral}
L_{n,M}^*=\int_{{\cal X}^n}\varphi(x_1^n)\left(\sum_{i=1}^M p_i(x_1^n)-\max_{1\le j\le M}p_j(x_1^n)\right)\,\rd\mu^{\otimes n}(x_1^n),
\end{equation}
where $p_i(x_1^n)=\prod_{k=1}^n p_i(x_k)$. Moreover, an optimal rule is given by the maximum-likelihood
classifier
\[
\delta_n^*(x_1^n)\in\arg\max_{1\le j\le M} p_j(x_1^n)
\]
(with any measurable tie-breaking).
\end{Proposition}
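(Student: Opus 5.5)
The plan mirrors the proof of Proposition~\ref{prop:pointwise-opt-loss}: rewrite the total loss as a single integral over ${\cal X}^n$ and minimise the integrand pointwise. Fix a deterministic rule $\delta_n$. Since $\delta_n(x_1^n)$ takes exactly one value, ${\bf 1}\{\delta_n(x_1^n)\neq i\}=1-{\bf 1}\{\delta_n(x_1^n)=i\}$. Summing the definitions of $L_{i,n}$ and using Fubini--Tonelli (all integrands are nonnegative) gives
\[
L_{n,M}(\delta_n)=\int_{{\cal X}^n}\varphi(x_1^n)\Big(\sum_{i=1}^M p_i(x_1^n)-p_{\delta_n(x_1^n)}(x_1^n)\Big)\,\rd\mu^{\otimes n}(x_1^n).
\]
Each summand $\int\varphi\,p_i\,\rd\mu^{\otimes n}=(E_\varphi(p_i))^n$ is finite, because $\rho^{\rw}_1(p_i,p_j)\in(0,\infty)$. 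Hence the subtraction is legitimate.

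Next comes the pointwise bound. For every $x_1^n$ we have $p_{\delta_n(x_1^n)}(x_1^n)\le\max_j p_j(x_1^n)$, and $\varphi\ge 0$. So the integrand is at least $\varphi(x_1^n)\bigl(\sum_i p_i(x_1^n)-\max_j p_j(x_1^n)\bigr)$. Integrating gives $L_{n,M}(\delta_n)\ge$ the right-hand side of \eqref{eq:mary-opt-integral} for every rule, and therefore the same bound holds for $L_{n,M}^*$.

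For the reverse inequality, define $\delta_n^*$ by taking the smallest index attaining $\max_j p_j(x_1^n)$. This choice is measurable, since it is determined by finitely many comparisons of measurable functions, and any other measurable tie-breaking works equally well. For $\delta_n^*$ the inequality in the previous step is an equality pointwise, so $L_{n,M}(\delta_n^*)$ equals the integral in \eqref{eq:mary-opt-integral}. This proves both the formula and the optimality of the maximum-likelihood classifier.

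The only point needing care is whether randomised rules should be allowed, as in the binary case. If they are, a randomised rule $D=(D_1,\dots,D_M)$ with $D_i\ge0$ and $\sum_i D_i=1$ has integrand $\varphi\bigl(\sum_i p_i-\sum_i D_ip_i\bigr)$. Since $\sum_i D_ip_i\le\max_j p_j$, the same lower bound holds, so randomisation does not help. I expect no genuine obstacle beyond checking the measurability of the argmax selection.
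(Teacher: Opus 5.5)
Your proof is correct and follows the paper's argument: rewrite $L_{n,M}(\delta_n)$ as $\int\varphi\bigl(\sum_i p_i-p_{\delta_n}\bigr)\,\rd\mu^{\otimes n}$ and minimise the integrand pointwise by choosing a maximising index. Your explicit lower-bound/attainment split, the measurable smallest-index tie-breaking, and the remark on randomised rules only make explicit what the paper's one-line ``minimisation is pointwise'' leaves implicit (the finiteness check is harmless but not needed, since $\sum_i p_i-p_{\delta_n}=\sum_{i\neq\delta_n}p_i\ge 0$ pointwise).
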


\begin{proof}
Fix $\delta_n$. Using $\sum_{i=1}^M {\bf 1}\{\delta_n\neq i\}p_i
= \sum_{i=1}^M p_i - p_{\delta_n}$ pointwise, we obtain
\[
L_{n,M}(\delta_n)
=\int_{{\cal X}^n} \varphi(x_1^n)\left(\sum_{i=1}^M p_i(x_1^n)-p_{\delta_n(x_1^n)}(x_1^n)\right)\rd\mu^{\otimes n}(x_1^n).
\]
Minimisation over $\delta_n$ is therefore pointwise in $x_1^n$ and is achieved by selecting an index
maximising $p_j(x_1^n)$, yielding \eqref{eq:mary-opt-integral}.
\end{proof}

\begin{Lemma}[Pairwise minima]\label{lem:mary-PairwiseMinima}
For any non-negative numbers $a_1,\ldots,a_M$,
\begin{equation}\label{eq:PairwiseMinima}
\max_{1\le i<j\le M}\min(a_i,a_j)\ \le\ \sum_{k=1}^M a_k-\max_{1\le k\le M}a_k\ \le\ \sum_{1\le i<j\le M}\min(a_i,a_j).
\end{equation}
Consequently, defining for $i<j$
\[
I_n^{i,j}:=\int_{{\cal X}^n}\varphi(x_1^n)\min\{p_i(x_1^n),p_j(x_1^n)\}\,\rd\mu^{\otimes n}(x_1^n),
\]
we have the sandwich inequality
\begin{equation}\label{eq:mary-risk-sandwich}
\max_{i<j} I_n^{i,j}\ \le\ L_{n,M}^*\ \le\ \sum_{i<j} I_n^{i,j}.
\end{equation}
\end{Lemma}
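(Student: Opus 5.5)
The plan is to prove the elementary inequality \eqref{eq:PairwiseMinima} by sorting, and then obtain \eqref{eq:mary-risk-sandwich} by applying it pointwise inside the integral of Proposition~\ref{prop:mary-pointwise}.

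\textbf{Step 1: order the numbers.} The quantities involved are symmetric in $a_1,\ldots,a_M$, so we may relabel so that $a_{(1)}\ge a_{(2)}\ge\cdots\ge a_{(M)}\ge 0$. Then the middle quantity is
\[
\sum_k a_k-\max_k a_k=\sum_{k=2}^M a_{(k)}.
\]

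\textbf{Step 2: lower bound.} For any pair $i<j$, at most one of $a_i,a_j$ can be the removed maximum. More concretely, $\min(a_i,a_j)\le a_{(2)}$, since the smaller of two entries is at most the second largest entry. Hence
\[
\max_{i<j}\min(a_i,a_j)=a_{(2)}\le\sum_{k\ge2}a_{(k)}.
\]

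\textbf{Step 3: upper bound.} For each $k\ge 2$, consider the pair consisting of the indices of $a_{(1)}$ and $a_{(k)}$; its minimum is exactly $a_{(k)}$. These $M-1$ pairs are distinct. Summing over them gives
\[
\sum_{k\ge 2}a_{(k)}=\sum_{k\ge2}\min\bigl(a_{(1)},a_{(k)}\bigr)\le\sum_{i<j}\min(a_i,a_j),
\]
where the last inequality holds because the remaining pairwise terms are nonnegative.

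\textbf{Step 4: transfer to the losses.} For fixed $x_1^n$, apply \eqref{eq:PairwiseMinima} with $a_k=p_k(x_1^n)\ge0$. Multiply by $\varphi(x_1^n)\ge0$ and integrate against $\mu^{\otimes n}$.
\begin{itemize}
\item For the upper bound, linearity of the integral turns the sum of pairwise minima into $\sum_{i<j}I_n^{i,j}$.
\item For the lower bound, use the pointwise inequality $\min(a_i,a_j)\le\sum_k a_k-\max_k a_k$ for each fixed pair, integrate it, and then take the maximum over pairs outside the integral.
\end{itemize}
Together with \eqref{eq:mary-opt-integral}, this gives \eqref{eq:mary-risk-sandwich}.

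All integrands are nonnegative, so the integrals are well defined in $[0,\infty]$ and there is no integrability obstacle; finiteness also follows from $I_n^{i,j}\le(\rho^{\rw}_{1/2}(p_i,p_j))^n$. The only subtle point is in the lower bound: the maximum over pairs must be taken after integrating, not before. The identity $\max_{i<j}\min(a_i,a_j)=a_{(2)}$ is used only pointwise, in Step 2.
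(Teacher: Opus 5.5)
Your proof is correct and follows essentially the same route as the paper. Sorting gives $\max_{i<j}\min(a_i,a_j)=a_{(2)}\le\sum_{k\ge2}a_{(k)}$; pairing the index of the maximum with each other index gives the upper bound; and the sandwich follows by applying this pointwise to $a_k=p_k(x_1^n)$, multiplying by $\varphi\ge0$ and integrating against \eqref{eq:mary-opt-integral}.

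One small overstatement: the maximum over pairs need not be taken after integrating. Monotonicity of the integral gives $\max_{i<j}I_n^{i,j}\le\int\varphi(x_1^n)\max_{i<j}\min\{p_i(x_1^n),p_j(x_1^n)\}\,\mathrm{d}\mu^{\otimes n}(x_1^n)$, so either order works.
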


\begin{proof}
Let $a_{(1)}\ge a_{(2)}\ge\cdots\ge a_{(M)}$ be the decreasing rearrangement of $(a_1,\ldots,a_M)$.
Then $\sum_k a_k-\max_k a_k=\sum_{r=2}^M a_{(r)}\ge a_{(2)}$.
Moreover, $\max\limits_{i<j}\min(a_i,a_j)=a_{(2)}$, proving the left inequality in \eqref{eq:PairwiseMinima}.

For the right inequality, let $k^*\in\arg\max_k a_k$. Then
\[
\sum_{1\le i<j\le M}\min(a_i,a_j)\ \ge\ \sum_{k\neq k^*}\min(a_k,a_{k^*})
=\sum_{k\neq k^*} a_k=\sum_{k=1}^M a_k-\max_k a_k.
\]
Applying \eqref{eq:PairwiseMinima} pointwise to $a_i=p_i(x_1^n)$, multiplying by $\varphi(x_1^n)$
and integrating yields \eqref{eq:mary-risk-sandwich}.
\end{proof}

\begin{Theorem}[$M$-ary exponent equals the minimum pairwise weighted Chernoff information]\label{thm:mary-exponent}
For $1\le i<j\le M$, let $D_C^{\rw}(\bP_i,\bP_j)$ be the weighted Chernoff information
as in Definition~\ref{def:weighted-chernoff}, and (\ref{finite}) holds true. Set
\[
C_M^{\rw}:=\min_{1\le i<j\le M} D_C^{\rw}(\bP_i,\bP_j).
\]
Then the optimal $M$-ary total loss satisfies
\begin{equation}\label{eq:mary-asymptotic}
L_{n,M}^*=\exp\{-nC_M^{\rw}+o(n)\},\qquad n\to\infty,
\end{equation}
or equivalently,
\[
\lim_{n\to\infty}-\frac{1}{n}\ln L_{n,M}^* = C_M^{\rw}.
\]
\end{Theorem}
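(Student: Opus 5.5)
The plan is to reduce everything to the binary result via the sandwich inequality \eqref{eq:mary-risk-sandwich} of Lemma~\ref{lem:mary-PairwiseMinima}. Each pairwise integral $I_n^{i,j}$ is exactly the optimal binary weighted loss $L_n^*$ of Proposition~\ref{prop:pointwise-opt-loss} for the pair $(\bP_i,\bP_j)$. The hypotheses of Theorem~\ref{thm:optimal-sum-loss} hold for every pair: densities are strictly positive, $\rho^{\rw}_\alpha(p_i,p_j)\in(0,\infty)$ by assumption, and the integrability condition \eqref{finitemean} for the pair $(p_i,p_j)$ is implied by \eqref{finite}. Hence, for each $i<j$,
\[
\lim_{n\to\infty}-\frac1n\ln I_n^{i,j}=D_C^{\rw}(\bP_i,\bP_j).
\]
By symmetry (Remark~\ref{rem:chernoff-symmetry}), the ordering of the pair does not matter.

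\emph{Upper bound on the loss.} From the right-hand inequality, $L_{n,M}^*\le \sum_{i<j} I_n^{i,j}\le \binom M2\max_{i<j}I_n^{i,j}$. Since $M$ is fixed, the factor $\binom M2$ contributes only $O(1)=o(n)$ to the logarithm. Taking $-\frac1n\ln$, we get
\[
\liminf_n -\tfrac1n\ln L_{n,M}^*\ \ge\ \min_{i<j}\liminf_n-\tfrac1n\ln I_n^{i,j}=C_M^{\rw}.
\]
The limit of a minimum of finitely many sequences each having a limit is the minimum of the limits. Alternatively, one can use directly $I_n^{i,j}\le(\rho^{\rw}_\alpha(p_i,p_j))^n$ for every $\alpha$, which bypasses the lower-bound half of the binary theorem.

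\emph{Lower bound on the loss.} From the left-hand inequality, $L_{n,M}^*\ge I_n^{i_0,j_0}$, where $(i_0,j_0)$ is a pair attaining $C_M^{\rw}$. Then
\[
\limsup_n-\tfrac1n\ln L_{n,M}^*\le \lim_n -\tfrac1n\ln I_n^{i_0,j_0}=C_M^{\rw}.
\]
Combining the two bounds yields \eqref{eq:mary-asymptotic}.

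The only step with real content is the lower bound $I_n^{i_0,j_0}\ge \exp\{-nD_C^{\rw}+o(n)\}$. This is the change-of-measure/LLN half of Theorem~\ref{thm:optimal-sum-loss}, including its boundary cases $\alpha^*\in\{0,1\}$. I will invoke it as already established rather than reprove it. The main thing to check carefully is that \eqref{finite} really delivers \eqref{finitemean} for the chosen pair, with the roles of $p$ and $q$ matched: \eqref{finite} is taken over all ordered pairs $i\ne j$, so this is fine. The remaining manipulations are routine bookkeeping of $o(n)$ terms.
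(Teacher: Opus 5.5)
Your proposal is correct and follows the paper's proof essentially step for step. Like the paper, you apply Theorem~\ref{thm:optimal-sum-loss} to each pair, with its hypotheses supplied by \eqref{finite}. You then bound $L_{n,M}^*$ above by $\binom{M}{2}\max_{i<j} I_n^{i,j}$ and below by $I_n^{i^*,j^*}$ for a minimising pair. The only difference is bookkeeping: the paper collects the $o(n)$ terms into a uniform $r_n=\max_{i<j}|o_{i,j}(n)|$, whereas you work with $\liminf$/$\limsup$ directly.
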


\begin{proof}
Fix $1\le i<j\le M$ and consider the binary testing problem between $\bP_i$ and $\bP_j$ with the same
factorised weight $\varphi$. The optimal binary total loss equals
\[
I_n^{i,j}=\int_{{\cal X}^n}\varphi(x_1^n)\min\{p_i(x_1^n),p_j(x_1^n)\}\,\rd\mu^{\otimes n}(x_1^n),
\]
and by Theorem~\ref{thm:optimal-sum-loss} applied to the pair $(\bP_i,\bP_j)$,
\[
I_n^{i,j}=\exp\{-nD_C^{\rw}(\bP_i,\bP_j)+o_{i,j}(n)\}.
\]
Since the number of pairs is finite, letting $r_n:=\max_{i<j}|o_{i,j}(n)|$ yields $r_n=o(n)$ and
\[
I_n^{i,j}=\exp\{-nD_C^{\rw}(\bP_i,\bP_j)+O(r_n)\}\qquad\text{uniformly over }i<j.
\]

Now use the sandwich inequality\eqref{eq:mary-risk-sandwich}. Let $(i^*,j^*)$ attain the minimum
$C_M^{\rw}=D_C^{\rw}(\bP_{i^*},\bP_{j^*})$. From the lower bound,
\[
L_{n,M}^*\ge I_n^{i^*,j^*}=\exp\{-nC_M^{\rw}+O(r_n)\}.
\]
From the upper bound,
\[
L_{n,M}^*\le \sum_{i<j} I_n^{i,j}
\le \binom{M}{2}\exp\{-nC_M^{\rw}+O(r_n)\}.
\]
Taking $-\frac{1}{n}\ln(\cdot)$ and letting $n\to\infty$ yields \eqref{eq:mary-asymptotic}.
\end{proof}

\begin{remark}[Nonzero priors do not change the exponent]\label{rem:mary-priors}
Let $w_1,\ldots,w_M>0$, $\sum_i w_i=1$, and consider the Bayesian weighted total loss
$L_{n,M}^{(w)}(\delta_n)=\sum\limits_{i=1}^M w_i L_{i,n}(\delta_n)$ with optimum
$L_{n,M}^{(w)*}:=\inf_{\delta_n} L_{n,M}^{(w)}(\delta_n)$.
Then, the exponent remains $C_M^{\rw}$.
Indeed, writing $w_{\min}:=\min_i w_i$ and $w_{\max}:=\max_i w_i$, for any $\delta_n$,
\[
w_{\min} L_{n,M}(\delta_n)\le L_{n,M}^{(w)}(\delta_n)\le w_{\max} L_{n,M}(\delta_n),
\]
and taking infimum over $\delta_n$ gives
$w_{\min}L_{n,M}^*\le L_{n,M}^{(w)*}\le w_{\max}L_{n,M}^*$.
Hence, $-\frac1n\ln L_{n,M}^{(w)*}$ and $-\frac1n\ln L_{n,M}^*$ have the same limit $C_M^{\rw}$.
\end{remark}

\section{Conclusions}\label{sec:conclusion}
\added{We studied context-sensitive simple hypothesis testing under a multiplicative weight and proved that the optimal total loss admits the single-letter logarithmic asymptotic
\[
L_n^*=\exp\{-nD_C^{\rw}(\bP,\bQ)+o(n)\}.
\]
The rate is the weighted Chernoff information. The main structural ingredient is an exponential-family embedding of the weighted geometric mixtures $\varphi\,p^\alpha q^{1-\alpha}$, which yields the characterisation of the optimal Chernoff parameter through the log-normaliser and leads to weighted information-geometric identities. We also derived finite-$n$ concentration bounds for the tilted weighted log-likelihood, obtained explicit formulas in Gaussian, Poisson, and exponential models, and extended the logarithmic asymptotic to finitely many hypotheses through the minimum pairwise weighted Chernoff information.}

{\subsection*{Open Problems}%
The single-letter representation \eqref{eq:intro-main-asymptotic} rests on Assumption~\ref{ass:phi-factorised} (factorised weights), on the integrability of the tilted log-normaliser $\hat F$, and on i.i.d.\ sampling of simple hypotheses. Relaxing these assumptions defines several natural open directions.
\begin{itemize}\itemsep2pt
\item[(a)] \emph{Non-factorised weights.} Replacing Assumption~\ref{ass:phi-factorised} by a weight $\varphi(x_1^n)=\psi\bigl(\tfrac{1}{n}\sum_{i=1}^n h(x_i)\bigr)$ or a pairwise-interaction weight; the single-letter rate is then expected to be replaced by a variational formula over the space of probability measures, in the spirit of Sanov/Gibbs conditioning.
\item[(b)] \emph{Integrability.} Weights with heavy tails in the sufficient statistic $t(x)$ may violate the finiteness of $\hat F$ near $\alpha^\ast\theta_1+(1-\alpha^\ast)\theta_2$; the boundary cases $\alpha^\ast\in\{0,1\}$ (cf.\ Proposition~\ref{prop:chernoff-jensen-bisector} and \cite{KS1}) call for a systematic treatment via truncation or a change in base measure.
\item[(c)] \emph{Dependent observations.} A weighted counterpart of the G{\"a}rtner--Ellis theorem for stationary ergodic sequences, along the lines of the weighted extensions in \cite{KS2}.
\item[(d)] \emph{Composite hypotheses.} A weighted analogue of the generalised likelihood-ratio test and its exponent, with sup/inf characterisations in terms of $D_C^{\rw}$ over the composite parameter sets.
\item[(e)] \emph{Information geometry of weighted manifolds.} Extending the Chentsov--Amari framework \cite{Chentsov1982,AmariNagaoka,Amari2016,NielsenSPL2013,N} to weighted statistical manifolds; the Fisher metric and the dually flat structure depend on symmetries that $\varphi$ breaks in a controlled way.
\end{itemize}}


\section*{Author Contributions}
Conceptualization, methodology, and supervision, M.K.; formal analysis and validation, M.K. and E.Y.K.; software, visualization, data curation, and project administration, E.Y.K.; writing--original draft preparation, M.K. and E.Y.K.; writing--review and editing, E.Y.K.; funding acquisition, M.K. and E.Y.K. All authors have read and agreed to the published version of the manuscript.

\section*{Funding}
The work by MK was carried out in the framework of a research project HSE-BR-2025-039 implemented as part of the Basic Research Program at HSE University.
The second author (E.Yu.~Kalimulina) was supported by the Ministry of Science and Higher Education of the Russian Federation under the state assignment (project FFNU-2025-0029).

\section*{Data Availability Statement}
The Python/Jupyter notebook reproducing Table 1 and Figures 1--3, together with the direct numerical-integration verification of the Gaussian formula in Section 4.1, is openly available on Zenodo.
No new experimental datasets were generated.

\section*{Conflicts of Interest}
The authors declare no conflicts of interest. The funders had no role in the design of the study; in the collection, analyses, or interpretation of data; in the writing of the manuscript; or in the decision to publish the results.


\appendix
\section{Cauchy Location--Scale Family}\label{app:cauchy}

We consider the univariate Cauchy location--scale family
\[
p_{l,s}(x)=\frac{s}{\pi\bigl(s^2+(x-l)^2\bigr)},\qquad x\in\bR,\quad l\in\bR,\ s>0.
\]
In this appendix, we set $\varphi\equiv 1$, so the weighted quantities from Section~\ref{sec:setup}
coincide with their classical counterparts (e.g.,\ $\rho^{\rw}_\alpha=\rho_\alpha$, $D^{\rw}_{B,\alpha}=D_{B,\alpha}$, $D_C^{\rw}=D_C$).
This example provides a closed-form illustration outside the exponential-family setting.

Although the main body of the paper develops \emph{weighted} Chernoff/Bhattacharyya quantities, we include the Cauchy location--scale family as an \emph{unweighted baseline} closed-form benchmark outside the exponential-family setting (note the appearance of complete elliptic integrals even in the classical case).
This appendix is not used in the proofs of the weighted results; it serves as a sanity check and illustrates the analytic complexity of $\rho_\alpha$ beyond exponential families.
Nontrivial weights must satisfy the finiteness conditions from Section~\ref{sec:setup}; for heavy-tailed Cauchy laws, common exponential tilts $\varphi(x)=e^{\gamma x}$ violate these conditions for any $\gamma\neq 0$, and in general, a nonconstant weight also breaks the symmetry leading to $\alpha^*=1/2$.

\subsection{KL Divergence (Closed Form)}

\begin{Proposition}[KL divergence between two Cauchy laws]\label{prop:cauchy-kl}
Let $\bP=\mathrm{Cauchy}(l_1,s_1)$ and $\bQ=\mathrm{Cauchy}(l_2,s_2)$ with $s_1,s_2>0$.
Then
\begin{equation}\label{eq:cauchy-kl}
D_{\rm KL}(\bP||\bQ)
=\int_{\bR} p_{l_1,s_1}(x)\,\ln\frac{p_{l_1,s_1}(x)}{p_{l_2,s_2}(x)}\,\rd x
=\ln\frac{(s_1+s_2)^2+(l_1-l_2)^2}{4s_1s_2}.
\end{equation}
\end{Proposition}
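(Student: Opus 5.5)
We prove \eqref{eq:cauchy-kl} by reducing to a standard case through invariance, and then computing one explicit integral.

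\textbf{Step 1: normalise one of the laws.} KL divergence is invariant under a common bijective affine change of variables $x\mapsto (x-l_1)/s_1$. This map sends $\mathrm{Cauchy}(l,s)$ to $\mathrm{Cauchy}((l-l_1)/s_1,\,s/s_1)$. The right-hand side of \eqref{eq:cauchy-kl} is invariant under the same substitution, since numerator and denominator both scale by $s_1^{-2}$ and translations cancel in $l_1-l_2$. It therefore suffices to treat $\bP=\mathrm{Cauchy}(0,1)$ and $\bQ=\mathrm{Cauchy}(l,s)$ with $l=(l_2-l_1)/s_1$ and $s=s_2/s_1$.

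\textbf{Step 2: split the log-ratio.} We have
\[
\ln\frac{p_{0,1}(x)}{p_{l,s}(x)}=-\ln s+\ln\bigl(s^2+(x-l)^2\bigr)-\ln(1+x^2).
\]
Each term is integrable against $p_{0,1}$, because the logarithms grow only like $\ln|x|$. The whole computation therefore reduces to the identity
\[
J(a,b):=\int_{\bR}\frac{1}{\pi(1+x^2)}\ln\bigl(b^2+(x-a)^2\bigr)\,\rd x=\ln\bigl((1+b)^2+a^2\bigr),\qquad b>0.
\]
Applying it twice gives
\[
D_{\rm KL}=-\ln s+J(l,s)-J(0,1)=\ln\frac{(1+s)^2+l^2}{4s},
\]
which is the claim after undoing the normalisation.

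\textbf{Step 3: prove the identity for $J$.} This is the main step. Write
\[
b^2+(x-a)^2=|x-a+ib|^2=|x-w|^2,\qquad w=a-ib .
\]
Then $\ln(b^2+(x-a)^2)=2\,\mathrm{Re}\ln(x-w)$. The function $z\mapsto\ln(z-w)$ is holomorphic in the upper half-plane, because $w$ lies in the lower half-plane and the branch cut can be taken pointing downward from $w$. The Cauchy density is the Poisson kernel of the upper half-plane evaluated at the point $i$. Since $\mathrm{Re}\ln(z-w)$ is harmonic there and grows only logarithmically, the Poisson representation applies and gives
\[
J(a,b)=2\ln|i-w|=\ln\bigl(a^2+(1+b)^2\bigr).
\]

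\textbf{Main obstacle.} The delicate point is justifying the Poisson representation for a harmonic function that is unbounded, growing like $\ln|z|$. If this is not to be taken as known, I would argue by contour integration instead. Close the real line with a large upper semicircle of radius $R$; the only pole of $1/(1+z^2)$ inside is at $z=i$, and its residue gives the value above. The arc contribution is $O\bigl(\ln R/R\bigr)\to0$.
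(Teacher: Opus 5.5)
Your proof is correct, and your contour-integral fallback resolves the point you flag as delicate. Fix a branch of $\log(z-w)$ whose cut runs downward from $w=a-ib$. Its argument is then bounded on the closed upper half-plane, so $|\log(z-w)|\le\ln(R+|w|)+C$ on the arc and the arc term is $O(\ln R/R)$. The real-line integral converges absolutely, so letting $R\to\infty$ gives $\int_{\bR}\frac{\log(x-w)}{\pi(1+x^2)}\,\rd x=\log(i-w)$. Taking real parts yields $J(a,b)=2\ln|i-w|=\ln\bigl(a^2+(1+b)^2\bigr)$.

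The paper itself gives no proof of this proposition. It quotes the closed form as known and uses it only as an unweighted baseline, so your argument is a self-contained derivation rather than a reconstruction of the paper's.

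Your method also gives slightly more than the stated formula. Since $p_{l,s}$ is the Poisson kernel of the upper half-plane at $\theta=l+is$, the same residue computation gives $\int_{\bR}p_{l_1,s_1}(x)\ln|x-\zeta|^2\,\rd x=\ln|\theta_1-\zeta|^2$ for every $\zeta$ in the open lower half-plane. Applying this with $\zeta=\bar\theta_2$ and $\zeta=\bar\theta_1$ makes your normalisation in Step~1 unnecessary and gives
\[
D_{\rm KL}(\bP\|\bQ)=\ln\frac{|\theta_1-\bar\theta_2|^2}{4\,\mathrm{Im}\,\theta_1\,\mathrm{Im}\,\theta_2},\qquad \theta_j=l_j+is_j .
\]
This form is manifestly symmetric in $(\bP,\bQ)$, which matches the Cauchy symmetry the appendix later invokes for $\rho_\alpha$.
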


\subsection{Chernoff Parameter and Bhattacharyya Coefficient}

Recall the (unweighted) $\alpha$-Bhattacharyya coefficient and distance
\[
\rho_\alpha(\bP,\bQ)=\int_{\bR} p_{l_1,s_1}(x)^\alpha\,p_{l_2,s_2}(x)^{1-\alpha}\,\rd x,
~
D_{B,\alpha}(\bP,\bQ)=-\ln \rho_\alpha(\bP,\bQ),~ \alpha\in[0,1].
\]

\begin{Proposition}[Chernoff parameter for Cauchy]\label{prop:cauchy-alpha-star}
For the Cauchy location--scale family (with $\varphi\equiv 1$), one has the symmetry
\begin{equation}\label{eq:cauchy-symmetry}
\rho_\alpha(\bP,\bQ)=\rho_{1-\alpha}(\bP,\bQ),\qquad \alpha\in[0,1];
\end{equation}
equivalently, $D_{B,\alpha}(\bP,\bQ)=D_{B,1-\alpha}(\bP,\bQ)$.
Consequently, if $\bP\neq \bQ$, then the Chernoff maximiser is unique and equals
\[
\alpha^*=\frac12,
\qquad\text{and hence}\qquad
D_C(\bP,\bQ)=\max_{\alpha\in[0,1]}D_{B,\alpha}(\bP,\bQ)=D_{B,1/2}(\bP,\bQ).
\]
If $\bP=\bQ$, then $D_{B,\alpha}(\bP,\bQ)\equiv 0$ and every $\alpha\in[0,1]$ is a maximiser.
\end{Proposition}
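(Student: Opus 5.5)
The plan is to prove the symmetry \eqref{eq:cauchy-symmetry} through the invariance of the Cauchy family under real M\"obius transformations. Uniqueness of $\alpha^*=\tfrac12$ then follows from the strict convexity of $F(\alpha)=\ln\rho_\alpha$, as in Subsection~\ref{subsec:exp-family}.

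\textbf{Step 1: transformation rule.} Encode $\mathrm{Cauchy}(l,s)$ by the point $\theta=l+is$ of the upper half-plane, so that
\[
p_\theta(x)=\frac{1}{\pi}\,\frac{\operatorname{Im}\theta}{|x-\theta|^2}.
\]
Let $g(x)=(ax+b)/(cx+d)$ with $a,b,c,d\in\bR$ and $ad-bc=1$. Extend $g$ to complex arguments by the same formula. I will check the following identities:
\[
\operatorname{Im} g(\theta)=\frac{\operatorname{Im}\theta}{|c\theta+d|^2},\qquad
|g(x)-g(\theta)|^2=\frac{|x-\theta|^2}{(cx+d)^2\,|c\theta+d|^2},\qquad
g'(x)=\frac{1}{(cx+d)^2}.
\]
Together they give the pointwise identity $p_{g(\theta)}(g(x))\,g'(x)=p_\theta(x)$ for all $x\neq -d/c$. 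In words, if $X\sim\mathrm{Cauchy}(\theta)$ then $g(X)\sim\mathrm{Cauchy}(g(\theta))$. The single exceptional point, and the point sent to $\infty$, are Lebesgue-null, so they can be ignored in all integrals.

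\textbf{Step 2: an involution swapping the two laws.} Write $\theta_1=l_1+is_1$ and $\theta_2=l_2+is_2$, with $\theta_1\neq\theta_2$. I need $g\in PSL(2,\bR)$ with $g(\theta_1)=\theta_2$ and $g(\theta_2)=\theta_1$. I will build it in two moves.
\begin{itemize}
\item First, use transitivity of the $PSL(2,\bR)$-action on the half-plane, together with the stabiliser of $i$ (which acts by rotations about $i$ in the hyperbolic sense). This gives $h\in PSL(2,\bR)$ with $h(\theta_1)=ir$ and $h(\theta_2)=i/r$ for some $r>0$: send the hyperbolic midpoint of $\theta_1,\theta_2$ to $i$, then rotate the geodesic onto the imaginary axis.
\item Second, the map $j(x)=-1/x$ lies in $PSL(2,\bR)$ and swaps $ir$ and $i/r$.
\end{itemize}
Then $g=h^{-1}\circ j\circ h$ works. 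To check the midpoint claim I only need that $PSL(2,\bR)$ acts by hyperbolic isometries, transitively on pairs of points at a given distance.

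\textbf{Step 3: the change of variables.} Substitute $y=g(x)$ in $\rho_\alpha$ and use Step~1 twice, writing $p_{\theta_1}(x)=p_{\theta_2}(g(x))\,g'(x)$ and $p_{\theta_2}(x)=p_{\theta_1}(g(x))\,g'(x)$. This gives
\[
\rho_\alpha(\bP,\bQ)=\int p_{\theta_2}(g x)^{\alpha}\,p_{\theta_1}(g x)^{1-\alpha}\,g'(x)\,\rd x=\int p_{\theta_2}(y)^{\alpha}\,p_{\theta_1}(y)^{1-\alpha}\,\rd y=\rho_{1-\alpha}(\bP,\bQ).
\]
The middle equality is valid because $g$ is a smooth monotone bijection on each of the at most two intervals into which its pole splits $\bR$. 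Those intervals map onto complementary intervals covering $\bR$ up to a point, and $g'>0$ on each.

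\textbf{Step 4: uniqueness of the maximiser.} If $\bP\neq\bQ$, the ratio $p/q$ is not a.e.\ constant, since two probability densities with a constant ratio coincide. Hence $F(\alpha)=\ln\rho_\alpha$ is strictly convex on $[0,1]$ by H\"older's inequality, as noted in Subsection~\ref{subsec:exp-family}. By Step~3, $F$ is symmetric about $\tfrac12$. A strictly convex function that is symmetric about $\tfrac12$ has $\tfrac12$ as its unique minimiser: if $\alpha_0\neq\tfrac12$ were a minimiser, then so would be $1-\alpha_0$, and strict convexity would make the value at their midpoint $\tfrac12$ strictly smaller, a contradiction. So $\alpha^*=\tfrac12$ is the unique maximiser of $D_{B,\alpha}=-F(\alpha)$. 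If $\bP=\bQ$, then $\rho_\alpha\equiv1$, so $D_{B,\alpha}\equiv 0$ and every $\alpha\in[0,1]$ is a maximiser.

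I expect Step~2 to be the main obstacle: the existence of the swapping involution inside $PSL(2,\bR)$, and handling the pole carefully in the change of variables. Step~1 is a routine computation. If the hyperbolic-geometry argument feels too implicit, I would instead solve for $g$ directly from the two conditions $g(\theta_1)=\theta_2$ and $g(\theta_2)=\theta_1$, using an explicit $h$ built from a translation, a dilation, and the rotation $x\mapsto(x\cos t+\sin t)/(-x\sin t+\cos t)$.
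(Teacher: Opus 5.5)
Your proposal is correct and has the same structure as the paper's proof: H\"older's inequality makes $\ln\rho_\alpha$ strictly convex when $\bP\neq\bQ$, and together with the symmetry $\rho_\alpha=\rho_{1-\alpha}$ this forces the unique maximiser $\alpha^*=\tfrac12$. The one difference is that the paper only cites an invariance-based argument for the symmetry, whereas you carry it out explicitly via the $PSL(2,\bR)$ involution swapping $l_1+is_1$ and $l_2+is_2$, and your Möbius computations and change of variables check out.
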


\begin{proof}
Set $F(\alpha):=\ln\rho_\alpha(\bP,\bQ)$. By H\"older's inequality, $F$ is convex on $[0,1]$,
and it is strictly convex unless $p_{l_1,s_1}/p_{l_2,s_2}$ is $\mu$-a.e.\ constant (equivalently, unless $\bP=\bQ$).
Hence $D_{B,\alpha}(\bP,\bQ)=-F(\alpha)$ is concave (strictly concave when $\bP\neq \bQ$).
For Cauchy laws, the symmetry \eqref{eq:cauchy-symmetry} holds; see, e.g., \cite{FO} for an invariance-based proof.
Therefore, $D_{B,\alpha}$ is symmetric about $1/2$, and concavity implies that it attains its maximum at $\alpha=1/2$.
Strict concavity yields uniqueness when $\bP\neq \bQ$.
\end{proof}

\subsection{Closed Form for $\rho_{1/2}$ and $D_C$}

\begin{Lemma}[A standard elliptic-integral identity]\label{lem:elliptic-identity}
Let $a,b>0$ and $d\in\bR$. Define the complete elliptic integral of the first kind
\[
K(m):=\int_{0}^{\pi/2}\frac{\rd u}{\sqrt{1-m\sin^2 u}},\qquad m\in[0,1).
\]
Then
\begin{equation}\label{eq:elliptic-identity}
\int_{-\infty}^{\infty}\frac{\rd x}{\sqrt{(x^2+a^2)\bigl((x-d)^2+b^2\bigr)}}
=
\frac{4}{\sqrt{(a+b)^2+d^2}}\,
K\!\left(\frac{(a-b)^2+d^2}{(a+b)^2+d^2}\right).
\end{equation}
\end{Lemma}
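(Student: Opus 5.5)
The plan is to reduce the general configuration to the symmetric case $d=0$ by a real Möbius transformation, and then to settle the symmetric case with the arithmetic--geometric mean (AGM) and Landen's transformation. Write $z_1=ia$ and $z_2=d+ib$, two points in the upper half-plane $\mathbb H$. Since $x$ is real,
\[
(x^2+a^2)\bigl((x-d)^2+b^2\bigr)=|x-z_1|^2|x-z_2|^2 ,
\]
so the integral is
\[
I(z_1,z_2):=\int_{\bR}\frac{\rd x}{|x-z_1|\,|x-z_2|}.
\]

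\textbf{Step 1 (invariance).} Let $g(x)=(\alpha x+\beta)/(\gamma x+\delta)$ with $\alpha\delta-\beta\gamma=1$ be an automorphism of $\mathbb H$. It maps $\bR\cup\{\infty\}$ onto itself with $g'(x)>0$ on the real line. I will use the standard identities
\[
|g(x)-g(z)|^2=g'(x)\,|g'(z)|\,|x-z|^2,\qquad \mathrm{Im}\,g(z)=|g'(z)|\,\mathrm{Im}\,z .
\]
Substituting $y=g(x)$ in $I(g(z_1),g(z_2))$, the factors $g'(x)$ cancel. This gives
\[
I(g(z_1),g(z_2))=\frac{I(z_1,z_2)}{|g'(z_1)|^{1/2}|g'(z_2)|^{1/2}} .
\]
Combining this with the formula for $\mathrm{Im}\,g(z)$, the quantity $\sqrt{\mathrm{Im}z_1\,\mathrm{Im}z_2}\;I(z_1,z_2)$ is invariant. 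The hyperbolic invariant
\[
\frac{|z_1-\bar z_2|^2}{4\,\mathrm{Im}z_1\,\mathrm{Im}z_2}=\frac{(a+b)^2+d^2}{4ab}
\]
is also preserved.

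\textbf{Step 2 (normalisation).} The group acts transitively on pairs of points at a given hyperbolic distance. I therefore choose $g$ so that $g(z_1)=iA$ and $g(z_2)=iB$ with $A,B>0$; this uses an isometry sending the geodesic through $z_1,z_2$ to the imaginary axis. Invariance of the hyperbolic quantity then gives
\[
\frac{(A+B)^2}{4AB}=\frac{(a+b)^2+d^2}{4ab}.
\]

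\textbf{Step 3 (symmetric case).} For $d=0$ the classical Gauss integral gives
\[
\int_{\bR}\frac{\rd x}{\sqrt{(x^2+A^2)(x^2+B^2)}}=\frac{\pi}{\mathrm{AGM}(A,B)} .
\]
I then apply Landen's relation $K(k^2)=\pi/\bigl(2\,\mathrm{AGM}(1,k')\bigr)$ with modulus $k=(A-B)/(A+B)$ and $k'=2\sqrt{AB}/(A+B)$. One AGM step gives $\mathrm{AGM}(A+B,2\sqrt{AB})=2\,\mathrm{AGM}(A,B)$. Together these yield
\[
\int_{\bR}\frac{\rd x}{\sqrt{(x^2+A^2)(x^2+B^2)}}=\frac{4}{A+B}\,K\!\left(\Bigl(\tfrac{A-B}{A+B}\Bigr)^2\right).
\]

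\textbf{Step 4 (transfer back).} By Step 1, $I(z_1,z_2)=\sqrt{AB/(ab)}\;I(iA,iB)$. By Step 2, $\sqrt{AB}/(A+B)=\sqrt{ab}/\sqrt{(a+b)^2+d^2}$, so the prefactor becomes $4/\sqrt{(a+b)^2+d^2}$. For the parameter,
\[
\Bigl(\tfrac{A-B}{A+B}\Bigr)^2=1-\frac{4AB}{(A+B)^2}=1-\frac{4ab}{(a+b)^2+d^2}=\frac{(a-b)^2+d^2}{(a+b)^2+d^2},
\]
which is exactly the argument in \eqref{eq:elliptic-identity}.

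The main obstacle is Step 1: the Jacobian identity for $g$ and the orientation of the real line. In particular, if $\gamma\neq0$ the point $x=-\delta/\gamma$ is sent to $\infty$, and I must check that the change of variables still covers $\bR$ once. It does, up to a null set, because $g'>0$ and $g$ is a bijection of the projective line. Step 3 is classical and I would cite it rather than rederive it.
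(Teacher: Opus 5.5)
Your proof is correct. It differs from the paper's treatment because the paper gives no proof of this lemma: it quotes the identity as standard and uses it directly in Proposition~\ref{prop:cauchy-bhat-closed-form}. It is the kind of formula one reads off from tables reducing $\int\rd x/\sqrt{Q(x)}$ to Legendre normal form, where $Q$ is a real quartic with two pairs of complex-conjugate roots.

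Your computations check out:
\begin{itemize}
\item The relation $g(x)-g(z)=(x-z)/\bigl((\gamma x+\delta)(\gamma z+\delta)\bigr)$ gives both your Jacobian identity and $\mathrm{Im}\,g(z)=|g'(z)|\,\mathrm{Im}\,z$.
\item Applied to the pair $(z_1,\bar z_2)$, the same relation gives the invariance of $|z_1-\bar z_2|^2/(4\,\mathrm{Im}z_1\,\mathrm{Im}z_2)$.
\item Step~4 uses only this invariant, not $A,B$ individually, so the constant and the parameter come out exactly as in \eqref{eq:elliptic-identity}. The parameter is $<1$ because $ab>0$, so $K$ is evaluated inside its domain.
\end{itemize}

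Step~2 can be made explicit instead of citing transitivity. For $d=0$ you are already in the symmetric case. For $d\neq 0$, the geodesic through $z_1,z_2$ is a semicircle with real endpoints $\xi_1<\xi_2$. The map $z\mapsto (z-\xi_1)/(\xi_2-z)$ has determinant $\xi_2-\xi_1>0$ and sends this semicircle onto the imaginary axis.

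A table citation buys only brevity. Your route gives a verifiable derivation that rests only on Gauss's AGM integral and Landen's relation, and it explains the shape of the answer. With $r$ the hyperbolic distance between $ia$ and $d+ib$, so that $\cosh^2(r/2)=((a+b)^2+d^2)/(4ab)$, the statement becomes $\sqrt{ab}\,I=2K(\tanh^2(r/2))/\cosh(r/2)$.

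It also fits the appendix naturally. Since $\sqrt{p_{l_1,s_1}p_{l_2,s_2}}=\sqrt{s_1s_2}/(\pi|x-z_1||x-z_2|)$ with $z_j=l_j+is_j$, your Step~1 is exactly the $\alpha=1/2$ case of the M\"obius invariance of Cauchy affinities. That invariance underlies the symmetry \eqref{eq:cauchy-symmetry} invoked in Proposition~\ref{prop:cauchy-alpha-star}.
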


\begin{Proposition}[Bhattacharyya coefficient for Cauchy, closed form]\label{prop:cauchy-bhat-closed-form}
Let $\bP=\mathrm{Cauchy}(l_1,s_1)$ and $\bQ=\mathrm{Cauchy}(l_2,s_2)$ with $s_1,s_2>0$, and set $\delta:=l_1-l_2$.
Then
\begin{equation}\label{eq:cauchy-rho-half}
\rho_{1/2}(\bP,\bQ)
=\int_{\bR}\sqrt{p_{l_1,s_1}(x)\,p_{l_2,s_2}(x)}\,\rd x
=
\frac{4\sqrt{s_1s_2}}{\pi\sqrt{(s_1+s_2)^2+\delta^2}}\,
K\!\left(\frac{(s_1-s_2)^2+\delta^2}{(s_1+s_2)^2+\delta^2}\right).
\end{equation}
Consequently,
\begin{equation}\label{eq:cauchy-chernoff-closed-form}
D_C(\bP,\bQ)=D_{B,1/2}(\bP,\bQ)=-\ln\rho_{1/2}(\bP,\bQ),
\end{equation}
with $\rho_{1/2}(\bP,\bQ)$ given by \eqref{eq:cauchy-rho-half}.
\end{Proposition}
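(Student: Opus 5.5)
The plan is to reduce \eqref{eq:cauchy-rho-half} to a direct application of Lemma~\ref{lem:elliptic-identity}, and then read off \eqref{eq:cauchy-chernoff-closed-form} from Proposition~\ref{prop:cauchy-alpha-star}.

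\emph{Step 1: the integrand.} For the Cauchy densities,
\[
\sqrt{p_{l_1,s_1}(x)\,p_{l_2,s_2}(x)}
=\frac{\sqrt{s_1s_2}}{\pi}\,
\frac{1}{\sqrt{\bigl(s_1^2+(x-l_1)^2\bigr)\bigl(s_2^2+(x-l_2)^2\bigr)}} .
\]
Both factors under the root are strictly positive, so the integrand is a continuous positive function. It decays like $|x|^{-2}$, hence it is integrable.

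\emph{Step 2: translation.} Substitute $x=y+l_1$, which leaves Lebesgue measure invariant. The integrand becomes
\[
\frac{1}{\sqrt{(y^2+s_1^2)\bigl((y-d)^2+s_2^2\bigr)}},\qquad d:=l_2-l_1=-\delta .
\]
This is exactly the form in Lemma~\ref{lem:elliptic-identity} with $a=s_1$, $b=s_2$ and $d^2=\delta^2$. The lemma then gives
\[
\int_{\bR}\frac{\rd y}{\sqrt{(y^2+s_1^2)((y-d)^2+s_2^2)}}
=\frac{4}{\sqrt{(s_1+s_2)^2+\delta^2}}\,
K\!\left(\frac{(s_1-s_2)^2+\delta^2}{(s_1+s_2)^2+\delta^2}\right).
\]
Multiplying by the prefactor $\sqrt{s_1s_2}/\pi$ yields \eqref{eq:cauchy-rho-half}.

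Two side checks are worth recording. First, the modulus $m=\frac{(s_1-s_2)^2+\delta^2}{(s_1+s_2)^2+\delta^2}$ lies in $[0,1)$ because $s_1,s_2>0$, so $K(m)$ is finite. Second, for $\bP=\bQ$ we get $m=0$ and $K(0)=\pi/2$, so the formula returns $\rho_{1/2}=1$, as it should.

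\emph{Step 3: the Chernoff information.} By Proposition~\ref{prop:cauchy-alpha-star}, the symmetry $\rho_\alpha=\rho_{1-\alpha}$ together with concavity of $\alpha\mapsto D_{B,\alpha}$ gives $\alpha^*=1/2$. Hence $D_C(\bP,\bQ)=D_{B,1/2}(\bP,\bQ)=-\ln\rho_{1/2}(\bP,\bQ)$, which is \eqref{eq:cauchy-chernoff-closed-form}. In the degenerate case $\bP=\bQ$ every $\alpha$ is a maximiser and the identity holds trivially.

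The only genuinely nontrivial ingredient is the elliptic-integral identity of Lemma~\ref{lem:elliptic-identity}, which is assumed here. Given that lemma, the main point needing care is bookkeeping of the shift: the sign of $d$ is irrelevant because only $d^2$ enters, and $a,b$ must be matched to the correct scales.
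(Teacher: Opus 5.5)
Your proposal is correct and follows the paper's proof: write out the integrand, translate to match the form of Lemma~\ref{lem:elliptic-identity}, and then invoke Proposition~\ref{prop:cauchy-alpha-star} to get $D_C=D_{B,1/2}$. The only difference is cosmetic: the paper shifts by $l_2$ (so $a=s_2$, $b=s_1$, $d=\delta$) while you shift by $l_1$ (so $a=s_1$, $b=s_2$, $d=-\delta$), and this does not matter because the lemma's right-hand side is symmetric in $a,b$ and depends on $d$ only through $d^2$.
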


\begin{proof}
By definition,
\[
\sqrt{p_{l_1,s_1}(x)\,p_{l_2,s_2}(x)}
=\frac{\sqrt{s_1s_2}}{\pi\sqrt{\bigl((x-l_1)^2+s_1^2\bigr)\bigl((x-l_2)^2+s_2^2\bigr)}}.
\]
Shift $x\mapsto x+l_2$ to obtain
\[
\rho_{1/2}(\bP,\bQ)
=\frac{\sqrt{s_1s_2}}{\pi}\int_{-\infty}^{\infty}
\frac{\rd x}{\sqrt{(x^2+s_2^2)\bigl((x-\delta)^2+s_1^2\bigr)}}.
\]
Apply Lemma~\ref{lem:elliptic-identity} with $a=s_2$, $b=s_1$, $d=\delta$ to get \eqref{eq:cauchy-rho-half}.
Finally, Proposition~\ref{prop:cauchy-alpha-star} yields $D_C(\bP,\bQ)=D_{B,1/2}(\bP,\bQ)$, hence \eqref{eq:cauchy-chernoff-closed-form}.
\end{proof}

\begin{remark}[Useful special cases]\label{rem:cauchy-special-cases}
If $l_1=l_2$ (common location), then $\delta=0$ and \eqref{eq:cauchy-rho-half} reduces to
\[
\rho_{1/2}(\bP,\bQ)
=\frac{4\sqrt{s_1s_2}}{\pi(s_1+s_2)}\,
K\!\left(\left(\frac{s_1-s_2}{s_1+s_2}\right)^2\right).
\]
If $s_1=s_2=s$ (common scale), then
\[
\rho_{1/2}(\bP,\bQ)
=\frac{4s}{\pi\sqrt{4s^2+\delta^2}}\,
K\!\left(\frac{\delta^2}{4s^2+\delta^2}\right).
\]
\end{remark}

\begin{remark}[Weighted case]\label{rem:cauchy-weighted}
For a general weight $\varphi$, one has 
$$\rho^{\rw}_{1/2}(\bP,\bQ)=\int_{\bR}\varphi(x)\sqrt{p_{l_1,s_1}(x)p_{l_2,s_2}(x)}\,\rd x.$$
The symmetry \eqref{eq:cauchy-symmetry} (and hence $\alpha^*=1/2$) typically fails unless $\varphi$ is compatible
with the invariance argument used in \cite{FO}. We therefore use the Cauchy model mainly as a closed-form baseline example at $\varphi\equiv 1$.
\end{remark}

\PublishersNote{}


\end{document}